\numberwithin{equation}{section}
\theoremstyle{plain}
\newtheorem{thm}[equation]{Theorem}
\newtheorem*{thm*}{Theorem}
\newtheorem{thmx}{Theorem}
\newtheorem{prop}[equation]{Proposition}
\newtheorem{cor}[equation]{Corollary}       
\newtheorem{lem}[equation]{Lemma}
\theoremstyle{definition} 
\newtheorem{defn}[equation]{Definition}
\newtheorem{rem}[equation]{Remark}
\newtheorem{chunk}[equation]{}
\newcommand{\Z}{\mathbb{Z}}
\newcommand{\Hom}{\mathrm{Hom}}
\newcommand{\msf}[1]{\mathsf{#1}}
\newcommand{\mc}[1]{\mathcal{#1}}
\newcommand{\mrm}[1]{\mathrm{#1}}
\newcommand{\mbb}[1]{\mathbb{#1}}
\newcommand{\scr}[1]{\mathscr{#1}}
\newcommand{\T}{\mathsf{T}}
\newcommand{\U}{\mathsf{U}}
\newcommand{\1}{\mathds{1}}
\renewcommand{\mod}[1]{\mathrm{Mod}_{#1}}
\newcommand{\A}{\mathsf{A}}
\newcommand{\B}{\mathsf{B}}
\newcommand{\C}{\mathsf{C}}
\newcommand{\D}{\mathsf{D}}
\renewcommand{\mod}[1]{\msf{mod}(#1)}
\newcommand{\Mod}[1]{\msf{Mod}(#1)}
\newcommand{\Flat}[1]{\msf{Flat}(#1)}
\newcommand{\ab}{\msf{Ab}}
\newcommand{\ti}{\textit}
\renewcommand{\tilde}[1]{\widetilde{#1}}
\newcommand{\rlim}{\varinjlim}
\renewcommand{\t}{\text}
\renewcommand{\c}{\mrm{c}}
\newcommand{\down}{{\downarrow}}
\newcommand{\up}{{\uparrow}}
\newcommand{\op}{\mrm{op}}
\renewcommand{\Flat}[1]{\msf{Flat}(#1)}
\renewcommand{\flat}[1]{\msf{flat}(#1)}
\newcommand{\y}{\msf{y}}
\renewcommand{\bar}{\overline}
\renewcommand{\hat}{\widehat}
\renewcommand{\phi}{\varphi}
\title{Definable functors between triangulated categories}
\begin{document}
\author{Isaac Bird}
\address[Bird]{Department of Algebra, Faculty of Mathematics and Physics, Charles University in Prague, Sokolovsk\'{a} 83, 186 75 Praha, Czech Republic}
\email{bird@karlin.mff.cuni.cz}
\author{Jordan Williamson}
\address[Williamson]{Department of Algebra, Faculty of Mathematics and Physics, Charles University in Prague, Sokolovsk\'{a} 83, 186 75 Praha, Czech Republic}
\email{williamson@karlin.mff.cuni.cz}
\subjclass[2020]{18G80, 18E45, 18C35, 18E10, 18F99}

\begin{abstract}
	We systematically develop the theory of definable functors between compactly generated triangulated categories. Such functors preserve pure triangles, pure injective objects, and definable subcategories, and as such appear in a wide range of  algebraic and topological settings. 
    Firstly we investigate and characterise purity preserving functors from a triangulated category into a finitely accessible category with products, which we term coherent functors. This yields a new property for the restricted Yoneda embedding as the universal coherent functor. We build upon the utility of coherent functors to provide several equivalent conditions for an additive, not necessarily triangulated, functor between triangulated categories to be definable: a functor is definable if and only if it preserves filtered homology colimits and products, if and only if it uniquely extends along the restricted Yoneda embedding to a definable functor between the corresponding module categories. We apply these results to the functoriality of the Ziegler spectrum, an object of study in pure homological algebra and representation theory. 
\end{abstract}
\maketitle
\setcounter{tocdepth}{1}
\tableofcontents


\section{Introduction}
Classification problems are one of the central pillars of mathematics, but are frequently highly intractable. For example, it is hopeless to try to classify topological spaces up to homotopy equivalence. Hoping to understand all modules over a ring is equally inaccessible.
To overcome such difficulties, one often adopts the two-fold strategy of weakening the notion of equivalence by passing to a stable setting, and then transporting information from a more well understood and tractable situation in order to gain insight into the original problem.

For instance, rather than classifying topological spaces up to homotopy equivalence, one weakens to the notion of stable equivalence and then seeks a classification in this new setting. The stable homotopy category of spectra is the resulting category, and it is an example of a compactly generated triangulated category. In algebra, taking the stable category of modules offers an analogous construction, as does passing to the derived category of chain complexes. Both of these are further examples of compactly generated triangulated categories.

Much of the information of the stable homotopy category of spectra is obtained through understanding various localisations, such as at the Morava $E$-theories, or via descent to more algebraically favourable situations. Knowledge of these smaller and simpler pieces may then be transported back to the whole category. As such, having a method for the transfer of information between different settings provides insight into the structure of compactly generated triangulated categories.

In this paper, we introduce definable functors in the setting of compactly generated triangulated categories, thus providing a means for transporting information concerning the structure of purity. The pure structure enables one to reduce many classification problems from studying proper classes to  understanding the structure of a particular set of objects. There are many examples of classification problems of active research interest which are related to the pure structure: 
\begin{itemize}
	\item smashing subcategories in stable homotopy theory~\cite{krsmash, krcq},
	\item silting and cosilting objects in representation theory~\cite{AMV,laking,lv,NSZ,saorstov}, 
	\item the homological, and thus Balmer, spectrum in tensor-triangular geometry~\cite{Balmerhomsupp,Balmernilpotence,bks,birdwilliamsonhomological}, 
	\item rank functions on triangulated categories with connections to stability conditions~\cite{chuanglazarev,conde2022functorial}, 
	\item the Ziegler spectrum in model theoretic algebra~\cite{bg,birdgfc,dualitypairs,prestgarkusha},
\end{itemize}
to name a few, are all encoded within the pure structure of compactly generated triangulated categories. Thus, understanding the pure structure, and significantly for this paper, its transport, is widely applicable. 

The striking thing about the structure of purity in compactly generated triangulated categories is that it is a property of the additive, rather than triangulated, structure of the category. To understand this, and to motivate the pure structure, let us consider the following question:
given a compactly generated triangulated category $\T$, how much of $\T$ can be recovered from its compact objects, $\T^{\c}$?

If one uses the triangulated structure, then, by definition, one can recover all of $\T$ from $\T^{\c}$; any object of $\T$ can be built from compact objects using triangles, sums, and retracts. But if one views $\T^{\c}$ as a small additive category and neglects the triangulated structure, one obtains a different outcome.

Here, one can take the ind-completion of $\T^{\c}$. Recall that there is a fully faithful embedding
\[
\y\colon\T^{\c}\to \Mod{\T^{\c}}=\msf{Add}((\T^{\c})^{\op},\ab)
\]
of $\T^{\c}$ into the category of right $\T^{\c}$-modules. This associates $\T^{\c}$ with the finitely presented projective modules in $\Mod{\T^{\c}}$. In analogy to the Govorov-Lazard theorem for modules over a ring, when we close this category of finitely presented projective modules under filtered colimits, we obtain the category of flat functors in $\Mod{\T^{\c}}$, denoted $\Flat{\T^{\c}}$, which may be identified with the category of cohomological functors $(\T^{\c})^{\op}\to\ab$. This is the ind-completion of $\T^{\c}$, and its finitely presented objects are equivalent to $\T^{\c}$.

Now, $\Flat{\T^{\c}}$ is, in many ways, extremely different to $\T$, even though they have the same small objects. Firstly, categorically they are very different, as $\T$ will not have true filtered colimits, while $\Flat{\T^{\c}}$ does; moreover $\Flat{\T^{\c}}$ is an exact, but not triangulated category. We may further see the difference between $\T$ and $\Flat{\T^{\c}}$ through the restricted Yoneda embedding $\y\colon X\mapsto\Hom_{\T}(-,X)\vert_{\T^{\c}}$, whose image lies within $\Flat{\T^{\c}}$. We discuss this functor more formally below. The maps which vanish under $\y$ form precisely the ideal of phantom maps in $\T$, hence $\y$ is faithful if and only if $\T$ is phantomless. Furthermore, in contrast to the classic case of the homotopy category of spectra studied by Adams~\cite{Adams}, $\y$ is seldom essentially surjective, nor full. Fullness fails for the derived category of rings as simple as $\mbb{C}[x,y]$, as illustrated in \cite[\S 6]{NeemanBrownAdams}, while examples of rings for which $\y$ is not essentially surjective can be found in \cite{CKN}. The question of when $\y$ is essentially surjective and full, as it is in the case of the homotopy category of spectra, is discussed in \cite[Theorem 11.8]{bel}, and a generalisation of Adams's result to the case when the compact objects form a countable category (as is true for spectra) is in \cite[\S5]{NeemanBrownAdams}. However, we put no restrictions whatsoever on our compactly generated triangulated category, so we do not assume either fullness or essential surjectivity of $\y$. 

Despite the categorical differences and distance that $\T$ and $\Flat{\T^{\c}}$ may be from being equivalent, the two approaches of triangulated and additive completion converge in one area: the pure structure. The pure structure of $\T$ is equivalent to the pure structure of $\Flat{\T^{\c}}$. This equivalence is at the heart of the study of purity in compactly generated triangulated categories, and, like many before us, we use it throughout. Yet to our knowledge, this paper is the first which uses the equivalence to explore functoriality of the pure structures without appealing to triangulated functors. Definable functors will be precisely those functors which transfer this pure structure between compactly generated triangulated categories.

So far, we have mentioned, but not defined, the pure structures on either $\T$ or $\Flat{\T^{\c}}$. Let us now be more precise. Following~\cite{krsmash}, a morphism $f$ in $\T$ is a \ti{pure monomorphism} if $\y(f)$ is a monomorphism in $\Mod{\T^{\c}}$. A triangle $X\to Y\to Z\to \Sigma X$ in $\T$ is said to be \ti{pure} if the map $X\to Y$ is a pure monomorphism. This is the case if and only if $Z\to \Sigma X$ is a phantom map. An object $X$ is said to be \ti{pure injective} if and only if every pure monomorphism $X\to Y$ is split. The pure triangles, with the pure injective objects, are two of the three ingredients which determine the pure structure of $\T$. The third are definable subcategories, which are the classes of objects which are annihilated by functors of the form $\msf{coker}(\Hom_{\T}(f,-))$, where $f$ is a morphism of compact objects. Using analogous definitions, one can define the same concepts in $\Flat{\T^{\c}}$, see~\cite{dac}.
\subsection*{The Yoneda embedding as the universal coherent functor}
To relate purity in $\T$ to purity in $\Flat{\T^{\c}}$, one uses the restricted Yoneda embedding $\y\colon\T\to\Flat{\T^{\c}}$, sending $X$ to $\Hom_{\T}(-,X)\vert_{\T^{\c}}$. The key point is that $\y$ provides an equivalence between these two theories of purity: a triangle is pure in $\T$ if and only if its image under $\y$ is a pure short exact sequence in $\Flat{\T^{\c}}$. Moreover, an object $X\in\T$ is pure injective if and only if $\y X$ is pure injective in $\Flat{\T^{\c}}$. One can also establish a bijection between definable subcategories of $\T$ and definable subcategories of $\Flat{\T^{\c}}$. In other words, the pure structure of $\T$ completely determines, and is determined by, the pure structure of $\Flat{\T^{\c}}$ through $
\y$. 

It is well known that $\y$ is the universal homological and coproduct preserving functor into an AB5 abelian category~\cite{krsmash}, but the purity preservation has nothing to do with $\y$'s homological nature. So, one may wonder whether $\y$ also has a universal property in relation to purity preservation. The existence of such a universal property is the first main theorem of this paper, which we now turn to formulating.

Let $\scr{A}$ be a finitely accessible category with products; this is precisely the setting one needs to be able to discuss the pure structure in the non-triangulated setting. We say that a functor $H\colon\T\to\scr{A}$
is \ti{coherent} if $H$ sends pure triangles to pure exact sequences, and preserves coproducts and products. We immediately see that $\y$ is a coherent functor, and that coherent functors are ubiquitous; for example homology in a derived category or homotopy in the stable homotopy category are both coherent. Coherent functors with target $\ab$ were introduced by Krause~\cite{krcoh}, and as such are a particular case of our more general notion, as discussed in \cref{abcoh}. We note that we do not require coherent functors to be homological.

Let us recall the notion of a definable functor in the world of finitely accessible categories. By definition, this is a functor which commutes with filtered colimits and products. These two properties ensure that definable functors preserve pure exact sequences and pure injective objects, and provide a means of pushing forward, and pulling back, definable subcategories. Definable functors between finitely accessible categories, unlike their triangulated counterparts that we introduce in this paper, are well understood, and have undergone significant study since their inception in the 1990s. Their utility can be seen in~\cite{birdgfc,gregprest,kredc,prestinterp,dac} for instance.

With that said, let us state our first main theorem.

\begin{thmx}(\ref{uniprop})\label{thmA}
	Let $H\colon\T\to\scr{A}$ be a coherent functor. Then there is a unique definable functor $\hat{H}\colon\Flat{\T^{\c}}\to\scr{A}$ between finitely accessible categories such that
	\[
	\begin{tikzcd}[row sep=1cm, column sep=1.5cm]
		\T \arrow[d,swap, "\y"] \arrow[dr, "H"] &  \\
		\Flat{\T^{\c}} \arrow[r,swap, "\hat{H}"] & \scr{A}
	\end{tikzcd}
	\]
	commutes. Consequently, $\y\colon\T\to\Flat{\T^{\c}}$ is the universal coherent functor.
\end{thmx}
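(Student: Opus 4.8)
The plan is to build $\hat H$ as the essentially unique filtered-colimit-preserving extension of $H|_{\T^{\c}}$ along the inclusion $\T^{\c}\hookrightarrow\Flat{\T^{\c}}$, and then to check this extension is definable and restricts correctly on all of $\T$. Recall that $\Flat{\T^{\c}}=\operatorname{Ind}(\T^{\c})$, with $\y$ identifying $\T^{\c}$ with the full subcategory of finitely presented objects. Since $\scr A$ is finitely accessible it admits filtered colimits, so the universal property of the ind-completion gives a filtered-colimit-preserving functor $\hat H\colon\Flat{\T^{\c}}\to\scr A$, unique up to isomorphism, with $\hat H(\y c)\cong Hc$ for $c\in\T^{\c}$; concretely $\hat H F\cong\operatorname{colim}_{\y c\to F}Hc$, the colimit over the comma category of representables over $F$, which is filtered because $F$ is flat. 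Uniqueness in the theorem is then automatic: a definable functor preserves filtered colimits and every object of $\Flat{\T^{\c}}$ is a filtered colimit of representables, so any definable functor making the triangle commute must restrict on $\T^{\c}$ to $H|_{\T^{\c}}$, hence agrees with $\hat H$. It remains to prove (a) $\hat H\circ\y\cong H$ and (b) $\hat H$ preserves products.

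For (a), which is the heart of the matter, fix $X\in\T$ and let $I=\T^{\c}/X$ be the comma category of maps $c_i\xrightarrow{f_i}X$ with $c_i$ compact. Since $\y X$ is a cohomological, hence flat, functor on $\T^{\c}$, the category $I$ is filtered and $\y X\cong\operatorname{colim}_I\y c_i$, so $\hat H(\y X)\cong\operatorname{colim}_I Hc_i$; I must show the comparison map $\operatorname{colim}_I Hc_i\to HX$ coming from the cocone $\{Hf_i\}_{i\in I}$ is an isomorphism. The map $(f_i)_{i\in I}\colon\coprod_I c_i\to X$ is sent by $\y$ to an epimorphism (for compact $d$, every $g\colon d\to X$ is the image of $\mathrm{id}_d$ in the summand indexed by $g$), so it is a pure epimorphism and fits in a pure triangle $W\to\coprod_I c_i\xrightarrow{(f_i)}X\to\Sigma W$. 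Let $\gamma\colon\coprod c_i\to\coprod_I c_i$, the coproduct on the left being indexed by the morphisms $i\to j$ of $I$, be the usual ``identity minus transition'' map, so $\y\gamma$ is the presentation map with cokernel $\y X$. As $(f_i)\circ\gamma$ is a phantom map out of the pure-projective object $\coprod c_i$, it is zero (restricted Yoneda is faithful on morphisms whose source is a coproduct of compact objects), so $\gamma$ factors through the pure monomorphism $W\to\coprod_I c_i$ via a map $\tilde\gamma$; and $\tilde\gamma$ is itself a pure epimorphism, since the image of $\y\tilde\gamma$ equals that of $\y\gamma$, namely $\y W$. Complete $\tilde\gamma$ to a pure triangle $W'\to\coprod c_i\xrightarrow{\tilde\gamma}W\to\Sigma W'$. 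Applying the coherent functor $H$ to the two pure triangles and using that $H$ preserves coproducts, the resulting pure exact sequences combine to give $HX\cong\operatorname{coker}(\coprod Hc_i\xrightarrow{H\gamma}\coprod_I Hc_i)$; since $H\gamma$ is the ``identity minus transition'' map of the diagram $\{Hc_i\}_{i\in I}$, this cokernel is $\operatorname{colim}_I Hc_i$, and one checks the isomorphism $\hat H(\y X)\cong HX$ so produced is natural in $X$. I expect this to be the main obstacle: it is precisely where purity of $\T$ genuinely enters, and it must be arranged carefully because $\T$ itself possesses neither the filtered colimits nor the kernels and cokernels being manipulated — only the pure exact structure carried across by $\y$ and by $H$.

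For (b), first observe that $\hat H$ preserves products of representables: for compact objects $c_k$ the product $\prod_k c_k$ exists in $\T$ and is preserved by $\y$, so $\prod_k\y c_k\cong\y(\prod_k c_k)$, and then (a) together with product-preservation of $H$ yields $\hat H(\prod_k\y c_k)\cong H(\prod_k c_k)\cong\prod_k Hc_k\cong\prod_k\hat H(\y c_k)$. For arbitrary $F_k\in\Flat{\T^{\c}}$, write each as a filtered colimit $F_k\cong\operatorname{colim}_{I_k}\y c_{k,i}$ and use the natural isomorphism presenting a product of filtered colimits as the filtered colimit, over the (again filtered) product category $\prod_k I_k$, of the corresponding products; this identity holds in $\ab$ by a direct computation and transfers to any finitely accessible category — in particular to $\Flat{\T^{\c}}$ and to $\scr A$ — via the conservative, product- and filtered-colimit-preserving embedding of such a category into a functor category. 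Commuting $\hat H$ past the filtered colimit and, by the representable case, past the products, and then reassembling via the same distributive identity in $\scr A$, gives $\hat H(\prod_k F_k)\cong\prod_k\hat H(F_k)$. Thus $\hat H$ preserves filtered colimits and products, so is definable, which with (a) establishes the factorization. Since $\y$ is itself coherent — it preserves coproducts (each $c\in\T^{\c}$ being compact), preserves products, and sends pure triangles to pure short exact sequences by definition — the factorization just proved is exactly the assertion that $\y\colon\T\to\Flat{\T^{\c}}$ is the universal coherent functor.
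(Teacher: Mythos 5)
Your proof is essentially correct, but it takes a genuinely different route from the one in the paper. You construct $\hat{H}$ directly as the filtered-colimit-preserving (Ind-)extension of $H\vert_{\T^{\c}}$ along $\y\colon\T^{\c}\xrightarrow{\sim}\flat{\T^{\c}}\hookrightarrow\Flat{\T^{\c}}$, verify the commutation $\hat{H}\y\cong H$ on non-compact objects by a hands-on argument with pure triangles realising the comma-category presentation of $\y X$, prove product preservation via the distributivity of products over filtered colimits indexed by the product of the index categories, and get uniqueness for free from the $2$-categorical universal property of the ind-completion. The paper instead obtains $\hat{H}$ from the Crawley-Boevey--Krause duality between definable functors and exact functors on the categories of finitely presented objects (\cref{thm:deftoex}, \cref{exactequivdefinable}), which delivers definability of the candidate at once but requires a lengthy unravelling through the categories $\mbb{D}(-)$ to check compatibility with the restricted Yoneda embeddings (\cref{commutesoncompacts}); uniqueness there is proved by comparing pure injective copresentations. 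Your approach is more elementary and self-contained; the paper's buys an equivalence of functor categories that is reused elsewhere. Note also that your step (a) is in effect a direct proof, for the specific homology colimit $X\simeq\msf{homcolim}_{\T^{\c}/X}\,c_i$, of the fact that coherent functors send filtered homology colimits to direct limits, which the paper establishes in \cref{enhanceddirectlimits} by reduction to the $\ab$-valued case.

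One step needs to be phrased more carefully: in (a) you assert $HX\cong\msf{coker}\bigl(\coprod Hc_i\xrightarrow{H\gamma}\coprod_I Hc_i\bigr)$ and identify this cokernel with $\operatorname{colim}_I Hc_i$, but $\scr{A}$ is merely a finitely accessible category with products and need not admit cokernels, so neither the cokernel nor the splicing of the two ``pure exact sequences'' literally makes sense inside $\scr{A}$. The fix is the same device you already invoke in (b): apply the fully faithful, product- and filtered-colimit-preserving (hence also coproduct-preserving) embedding $\scr{A}\to(\msf{fp}(\scr{A})^{\op},\ab)$, under which pure exactness becomes genuine exactness; there the two sequences splice to exhibit the image of the comparison map $\operatorname{colim}_I Hc_i\to HX$ as an isomorphism, and conservativity of the embedding finishes the argument. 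With that adjustment the proof is complete.
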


The proof of this theorem occupies the bulk of \cref{sec:universalprop}, so let us give a brief overview of how it proceeds. The first stage is embedding the category of coherent functors $\T\to\scr{A}$ into a certain category of exact functors. We then use the yoga of finitely accessible categories introduced by Crawley-Boevey in~\cite{CrawleyBoevey} to prove this category of exact functors is equivalent to the category of definable functors $\Flat{\T^{\c}}\to\scr{A}$. This gives us a candidate for $\hat{H}$. However, it must be shown that the above diagram commutes. By purely embedding $\Flat{\T^{\c}}$ into its universal locally coherent `envelope' via two further Yoneda embeddings, we unravel several equivalences of categories to show that this candidate functor is precisely what we need and is unique.

We then establish some further properties of $\hat{H}$. In the case when $\scr{A}$ has kernels, we may further extend $\hat{H}$ to a unique definable functor $\bar{H}\colon\Mod{\T^{\c}}\to\scr{A}$, see \cref{moduleextension}. This also shows that $\y$ is the universal coherent functor into finitely accessible Grothendieck categories.
We note that when $H$ is homological, this extension $\bar{H}$ agrees with the well established extensions of Beligiannis~\cite{bel} and Krause~\cite{krsmash}. 

\subsection*{Definable functors and their lifts}

As of yet, we have made no further mention of purity preserving functors between triangulated categories, but it transpires that coherent functors are precisely what one needs to make an appropriate definition. We  categorify the fact that purity in $\T$ and purity in $\Flat{\T^{\c}}$ are equivalent to both motivate, and define, a definable functor of triangulated categories. This is witnessed by the first condition in the following theorem.

\begin{thmx}(\ref{enhancement})\label{intro:deftheorem}
	Let $F\colon\T\to\U$ be an additive functor between compactly generated triangulated categories. Then the following are equivalent:
	\begin{enumerate} 
		\item there exists a unique definable functor $\hat{F}\colon\Flat{\T^{\c}}\to\Flat{\U^{\c}}$ of finitely accessible categories such that 
		\[
		\begin{tikzcd}
			\T \arrow[r, "F"] \arrow[d,swap, "\y"] & \U \arrow[d, "\y"] \\
			\Flat{\T^{\c}} \arrow[r, swap, "\hat{F}"] & \Flat{\U^{\c}}
		\end{tikzcd}
		\]
		commutes;
		\item $F$ preserves products and filtered homology colimits;
		\item $F$ preserves coproducts, products, and pure triangles.
	\end{enumerate}
	We say that $F$ is \emph{definable} if it satisfies any of these equivalent conditions.
\end{thmx}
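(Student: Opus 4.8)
The plan is to prove the cyclic chain of implications $(1)\Rightarrow(3)\Rightarrow(2)\Rightarrow(1)$, using Theorem~\ref{thmA} as the main engine. Throughout I will write $\y$ for both restricted Yoneda embeddings $\T\to\Flat{\T^{\c}}$ and $\U\to\Flat{\U^{\c}}$, and use freely the standard fact (recalled in the introduction) that $\y$ transports the pure structure: pure triangles go to pure exact sequences, pure injectives to pure injectives, coproducts to coproducts, and that a triangle is pure in $\T$ exactly when its image is a pure exact sequence in $\Flat{\T^{\c}}$.

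\emph{$(1)\Rightarrow(3)$.} Suppose $\hat F$ exists as in the square. Since $\hat F$ is a definable functor between finitely accessible categories, it preserves filtered colimits, products, pure exact sequences and pure injectives. Now take a coproduct $\bigsqcup X_i$ in $\T$. Its image $\y(\bigsqcup X_i)=\bigsqcup \y X_i$ is a coproduct in $\Flat{\T^{\c}}$; applying $\hat F$ and using that $\hat F$ preserves coproducts (as a consequence of preserving filtered colimits and finite coproducts, the latter being automatic for an additive functor) gives $\y(F\bigsqcup X_i)\cong\bigsqcup \y(F X_i)=\y(\bigsqcup F X_i)$. A coproduct in a compactly generated triangulated category is detected by $\y$ up to the phantom ideal; more precisely, since $\y$ reflects coproducts in the appropriate sense (the canonical map $\bigsqcup FX_i\to F\bigsqcup X_i$ becomes an isomorphism after $\y$, hence is a pure monomorphism that is also a phantom-epimorphism, and one checks it is split, hence an isomorphism --- here I would cite the characterisation of coproducts via $\y$ used in \cite{krsmash}). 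The same argument with products, using that $\y$ preserves products into $\Flat{\T^{\c}}$, gives product preservation. For pure triangles: if $X\to Y\to Z\to\Sigma X$ is pure in $\T$, then $\y$ of it is pure exact in $\Flat{\T^{\c}}$, so $\hat F\y$ of it is pure exact in $\Flat{\U^{\c}}$, i.e. $\y F$ of the triangle is pure exact, which means $FX\to FY\to FZ\to\Sigma FX$ is pure in $\U$. (One must first check $F$ applied to a triangle is again a triangle, or rather that the image under $\y$ of $FX\to FY\to FZ$ extends to the pure exact sequence obtained from $\hat F$; purity is a statement about the map $FX\to FY$, and $\y(FX\to FY)$ is a monomorphism since $\hat F$ preserves monomorphisms that are pure, so $FX\to FY$ is a pure monomorphism and we are done.)

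\emph{$(3)\Rightarrow(2)$.} Filtered homology colimits are, by definition, the colimits computed by applying $\y$; concretely a filtered homology colimit of a system $(X_i)$ is an object $X$ together with maps $X_i\to X$ such that $\y X=\varinjlim\y X_i$. Such colimits are built from coproducts and pure triangles: the standard construction presents $X$ as the cone completing the pure ``telescope'' map $\bigsqcup X_i\to\bigsqcup X_i$, so that $\bigsqcup X_i\to\bigsqcup X_i\to X\to\Sigma\bigsqcup X_i$ is a pure triangle. A functor preserving coproducts and pure triangles therefore sends this data to the analogous data in $\U$, and since $\y F$ applied to a pure triangle is pure exact, $\y FX\cong\varinjlim\y FX_i=\varinjlim\y FX_i$, i.e. $FX$ is a filtered homology colimit of $(FX_i)$. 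Product preservation is already hypothesised in~(3).

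\emph{$(2)\Rightarrow(1)$.} This is where Theorem~\ref{thmA} does the work. Consider the composite $H:=\y\circ F\colon\T\to\Flat{\U^{\c}}$. I claim $H$ is a coherent functor into the finitely accessible category with products $\scr A=\Flat{\U^{\c}}$. It preserves products since both $F$ and $\y$ do. It preserves coproducts: $F$ preserves coproducts because coproducts are a special case of filtered homology colimits (the filtered system of finite subcoproducts) and $\y$ preserves coproducts by construction. For pure triangles: a pure triangle in $\T$ is, as above, obtained from a telescope, hence is a filtered homology colimit diagram in disguise, so $F$ sends it to a pure triangle in $\U$, and $\y$ then sends that to a pure exact sequence in $\Flat{\U^{\c}}$; thus $H$ sends pure triangles to pure exact sequences. (Alternatively: $\y$ itself is coherent, $F$ preserves the generating pure triangles coming from telescopes, and one bootstraps to all pure triangles --- I expect this to need a small argument identifying pure triangles up to the telescope ones, which is standard.) Hence $H$ is coherent, and Theorem~\ref{thmA} yields a unique definable functor $\hat H\colon\Flat{\T^{\c}}\to\Flat{\U^{\c}}$ with $\hat H\circ\y=H=\y\circ F$. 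Setting $\hat F:=\hat H$ gives the commuting square, and its uniqueness as a definable functor is exactly the uniqueness clause of Theorem~\ref{thmA}.

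\emph{Main obstacle.} The delicate point is the precise manipulation of coproducts and pure triangles under $\y$ in the implication $(1)\Rightarrow(3)$ and in verifying coherence of $H$ in $(2)\Rightarrow(1)$: one must be careful that $\y$ is generally not full or faithful (only faithful on phantomless categories), so ``$\y$ of a map is an isomorphism'' does not immediately give that the map is an isomorphism in $\T$ or $\U$. The resolution is that in the relevant cases the map in question is simultaneously a pure monomorphism (since $\y$ of it is mono) and has a cone that is phantom on both sides, forcing it to be split --- this is precisely the mechanism by which coproducts and filtered homology colimits are characterised internally to $\T$ via $\y$, and I would isolate this as a preliminary lemma (or cite the corresponding statements from \cite{krsmash, bel}) before running the three implications. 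A secondary, more bookkeeping-level obstacle is checking that ``filtered homology colimit'' and ``pure triangle'' generate each other in the sense used above, so that preserving one class of diagrams forces preserving the other; this is where the telescope construction is invoked and should be stated cleanly once and reused in both directions.
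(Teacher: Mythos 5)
Your overall architecture matches the paper's: the heavy lifting in $(2)/(3)\Rightarrow(1)$ is done by applying Theorem~\ref{thmA} to the coherent functor $\y\circ F$, the equivalence $(2)\Leftrightarrow(3)$ is mediated by the interplay between pure triangles, coproducts, and filtered homology colimits, and $(1)\Rightarrow(3)$ is extracted from the existence of the lift $\hat F$. Two points differ in execution and are worth flagging. First, the ``main obstacle'' you isolate in $(1)\Rightarrow(3)$ dissolves once you note that $\y$ is \emph{conservative} (if $\y f$ is an isomorphism then the long exact sequence of the triangle on $f$ forces $\y(\mathrm{cone}\,f)=0$, hence $\mathrm{cone}\,f=0$ by compact generation); this is exactly what the paper's Lemma~\ref{definablepreserves} uses, and it replaces your more delicate ``pure mono which is also a phantom-epi, hence split'' mechanism. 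Similarly, for pure-triangle preservation the paper completes $F\alpha$ to a genuine triangle and compares via (TR3) to see that the candidate triangle $FX\to FY\to FZ$ is isomorphic to a pure triangle; your parenthetical gestures at this but does not quite close the loop that the image is an actual triangle rather than merely a candidate one with $F\alpha$ a pure mono.

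The one genuine imprecision is your telescope in $(3)\Rightarrow(2)$: the triangle $\bigoplus X_i\to\bigoplus X_i\to X$ with the \emph{same} coproduct on both sides presents only sequential homotopy colimits. For a general filtered system the correct presentation is $\bigoplus_{\alpha\colon i\to j}X_i\to\bigoplus_i X_i\to X$, with the first coproduct indexed by morphisms; one then needs the standard fact that for filtered systems the induced sequence of $\y$-images is short exact (exactness of filtered colimits in $\Mod{\T^{\c}}$), so that the triangle is pure and its image under $F$ computes $\varinjlim\y FX_i$. This is patchable, but the paper avoids it entirely: its Proposition~\ref{enhanceddirectlimits} proves $(3)\Rightarrow(2)$ by composing with $\Hom_{\scr{A}}(A,-)$ for $A$ finitely presented, reducing to the $\ab$-valued case settled in \cite[Proposition 5.1]{krcoh}. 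Your telescope idea is instead essentially what the paper uses for the \emph{converse} direction (pure triangles are filtered homology colimits of split triangles, coproducts are filtered homology colimits of finite subcoproducts), where it is unproblematic. With the telescope corrected to the morphism-indexed form and conservativity of $\y$ invoked where needed, your proof goes through.
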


We note that condition (1) asks for the \emph{existence} of a functor $\hat{F}$ extending $F$ which is also definable; in general, given an arbitrary additive functor $G\colon \T \to \U$ there need not be a lift $\hat{G}\colon \Flat{\T^\c} \to \Flat{\U^\c}$ extending $G$. 

Thus the above provides the appropriate notion of a definable functor in the triangulated world: not only does it replicate the properties of definable functors between finitely accessible categories, it also shows that any definable functor between triangulated categories induces a definable functor between finitely accessible categories. This extends and categorifies the relationship between purity in triangulated and finitely accessible categories from objects (the triangulated categories themselves) to morphisms (the definable functors).

At this stage we should highlight that the concept of a definable \emph{triangulated} functor already exists in the literature, due to Beligiannis~\cite{Beligiannis}: a triangulated functor is Beligiannis definable if and only if it preserves products and coproducts. We show in \cref{triangulatedversion} that a functor is Beligiannis definable if and only if it is a triangulated functor which is definable in the sense of \cref{intro:deftheorem}.

We demonstrate that definable functors as introduced in \cref{intro:deftheorem} provide the correct notion of purity preserving functors in this setting. Not only do they preserve pure triangles, but they also preserve pure injective objects, including endofinite objects, and enable the pullback and pushforward of definable subcategories. Indeed, since $\y$ is coherent, these properties follow from the fact that $F$ induces a unique definable functor $\hat{F}$. 

However, it is often more convenient to work with finitely accessible Grothendieck categories than arbitrary finitely accessible categories; in particular, the obvious such category to consider is $\Mod{\T^{\c}}$. Indeed, in much of the literature, $\Flat{\T^{\c}}$ is viewed secondarily to $\Mod{\T^{\c}}$ when discussing purity in $\T$. However, in terms of definable functors, it does not make any difference whatsoever.

\begin{thmx}(\ref{leftexactextension}, \ref{barFhasleftadjoint})\label{intro:extension}
	Let $F\colon\T\to\U$ be a definable functor. Then there is a unique left exact definable functor $\bar{F}\colon\Mod{\T^{\c}}\to\Mod{\U^{\c}}$ such that $\bar{F}\circ \y = \y\circ F$. Furthermore, $\bar{F}$ admits an exact left adjoint.
\end{thmx}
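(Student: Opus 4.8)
The plan is to obtain $\bar{F}$ as a special case of the module extension of \cref{moduleextension}, to pin down its restriction to the finitely presented modules so as to see that it is left exact, and then to produce the left adjoint by an adjoint functor theorem, controlling its exactness through the behaviour of injective objects under the restricted Yoneda embedding.

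First I would check that $\y_\U\circ F\colon\T\to\Mod{\U^\c}$ is a coherent functor: it preserves products and coproducts, since $F$ does by Theorem~B and $\y_\U$ does as well, and it sends pure triangles to pure exact sequences, since $F$ preserves pure triangles and $\y_\U$ is coherent. As $\Mod{\U^\c}$ is a finitely accessible Grothendieck category it has kernels, so \cref{moduleextension} supplies a unique definable functor $\bar{F}\colon\Mod{\T^\c}\to\Mod{\U^\c}$ with $\bar{F}\circ\y_\T=\y_\U\circ F$; the uniqueness clauses of Theorems~A and~B then identify its restriction to $\Flat{\T^\c}$ with $\hat{F}$, and the construction behind \cref{moduleextension} shows that its restriction to $\mod{\T^\c}$ is left exact (alternatively, this can be checked directly: $\bar F$ preserves pure exact sequences, a short exact sequence of $\T^\c$-modules with flat cokernel is pure, and the kernel of an epimorphism between flat modules is flat). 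Since $\T^\c$ is triangulated it has weak kernels, so $\mod{\T^\c}$ is abelian, $\Mod{\T^\c}$ is locally coherent, and every short exact sequence in it is a filtered colimit of short exact sequences of finitely presented modules; as $\bar{F}$ commutes with filtered colimits and these are exact, left exactness of $\bar{F}$ reduces to left exactness of $\bar{F}|_{\mod{\T^\c}}$, which we have. Uniqueness of $\bar{F}$ among \emph{left exact} definable functors satisfying $\bar{F}\circ\y=\y\circ F$ is then immediate from its uniqueness among definable ones.

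It remains to produce the exact left adjoint. Being definable, $\bar{F}$ preserves products; being left exact, it preserves kernels; hence it preserves all limits, and it is accessible since it preserves filtered colimits. As $\Mod{\T^\c}$ and $\Mod{\U^\c}$ are locally presentable, the adjoint functor theorem gives a left adjoint $L\colon\Mod{\U^\c}\to\Mod{\T^\c}$ of $\bar{F}$. Being a left adjoint, $L$ is right exact, so it suffices to show that $L$ preserves monomorphisms; since $\Mod{\T^\c}$ has enough injectives, this is equivalent to $\bar{F}$ preserving injective objects. Here I would invoke the theorem of Krause that $\y_\T\colon\T\to\Mod{\T^\c}$ restricts to an equivalence between the pure injective objects of $\T$ and the injective objects of $\Mod{\T^\c}$ (see, e.g., \cite{krsmash}): an injective $I\in\Mod{\T^\c}$ is then of the form $\y_\T X$ for some pure injective $X$, so $\bar{F}I\cong\bar{F}\y_\T X=\y_\U FX$ with $FX$ again pure injective (definable functors preserve pure injectivity, as observed after Theorem~B), whence $\bar{F}I$ is injective. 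Therefore $L$ is exact.

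I expect the main obstacle to be the left exactness of $\bar{F}$. Unlike in the triangulated setting of Beligiannis and Krause, where the functor induced on module categories is exact because it transports the long exact sequence attached to a triangle to another such sequence, here $F$ is not assumed triangulated and $\y_\U\circ F$ need not be homological, so left exactness must be extracted from the construction underlying \cref{moduleextension} — equivalently, from the universal property of $\y$ as the universal coherent functor — rather than from a direct computation with triangles. Once left exactness is secured, the construction of the adjoint and the verification of its exactness are essentially formal, the one non-formal ingredient being the identification of the injective objects of $\Mod{\T^\c}$ with the Yoneda images of pure injective objects of $\T$.
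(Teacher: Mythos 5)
Your argument is correct, and for the existence and uniqueness of $\bar{F}$ it is essentially the paper's: you apply \cref{moduleextension} to the coherent functor $\y\circ F\colon\T\to\Mod{\U^\c}$, whereas the paper applies \cref{uniprop} to obtain $\hat{F}$ and then extends via the identification $\mbb{D}(\Flat{\T^\c})=\Mod{\T^\c}$ and Krause's universal property; both routes rest on the same machinery, and your identification of $\bar{F}\vert_{\Flat{\T^\c}}$ with $\hat{F}$ via \cref{preservesflats} and the uniqueness in \cref{uniprop} is fine. Where you genuinely diverge is the exact left adjoint. The paper reads this off wholesale from the cited result of Krause (which moreover yields that the adjoint preserves finitely presented modules, a fact used later in \cref{imageofpureF}). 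You instead produce $L$ by the adjoint functor theorem ($\bar{F}$ preserves all limits and filtered colimits between locally finitely presentable categories) and prove exactness by hand: $L$ is right exact, and it preserves monomorphisms because $\bar{F}$ preserves injectives, which follows from the equivalence $\y\colon\msf{Pinj}(\T)\xrightarrow{\sim}\msf{Inj}(\T^\c)$ of \cref{pureinjectiveequivalences} together with \cref{preservespurestructure}. This is sound --- a map $f$ in a category with enough injectives is a monomorphism if and only if $\Hom(f,I)$ is surjective for every injective $I$, and adjunction transfers this across $\bar{F}$ --- and it is more self-contained than the paper's citation, at the modest cost of not recovering preservation of finitely presented modules, which Theorem C does not demand. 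One small caveat: your parenthetical ``direct check'' of left exactness only treats short exact sequences in $\mod{\T^\c}$ with flat cokernel, so it does not by itself establish left exactness on all of $\mod{\T^\c}$; this is harmless, since your primary justification via the conclusion of \cref{moduleextension} already supplies left exactness, but the aside should either be completed (e.g.\ by comparing a general short exact sequence with a flat resolution in the Frobenius category $\mod{\T^\c}$) or dropped.
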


Since definable functors need not be triangulated, we may not apply Brown representability to obtain the existence of adjoints on the level of triangulated categories. The above result is a partial replacement for this: one will always obtain the existence of a left adjoint on the level of module categories, irrespective of whether the original definable functor admits any adjoints. 

At this stage we make clear that definable functors are ubiquitous, both triangulated and otherwise. In \cref{sec:examples}, we give many explicit examples of such functors arising in homotopy theory, representation theory, homological algebra, and tensor-triangular geometry.

A discussion about the preservation of the pure structure would be incomplete without considering induced maps on the Ziegler spectrum. This is a topological space $\msf{Zg}(\T)$ which encodes the pure structure of $\T$: its points are isomorphism classes of indecomposable pure injective objects, while its closed sets biject with definable subcategories. As such it can be seen as topologically parametrising all finite type quotients of $\Mod{\T^{\c}}$. The Ziegler spectrum of compactly generated triangulated categories has long been an object of study, such as the case of stable module categories of group rings, and other quasi-Frobenius rings, see for instance~\cite{bg, krbook, psl}. It has also seen use in the derived categories of finite dimensional algebras~\cite{discrete}, and the tools of definable subcategories and indecomposable pure injectives are a mainstay in representation theoretic questions for big objects. It has also been shown that this space finds its use in tensor-triangular geometry \cite{birdwilliamsonhomological}. The following provides a sample application of the theory developed in this paper, by demonstrating how definable functors induce maps between Ziegler spectra.
\begin{thmx}(\ref{thm:definducedonzeigler})
	Let $F\colon\T\to\U$ be a definable functor, and suppose that $F$ is full on pure injective objects. Then $F$ gives a closed, continuous map $F\colon \msf{Zg}(\T) \to \msf{Zg}(\msf{U})$, which moreover restricts to a homeomorphism
	\[
	\msf{Zg}(\T)\setminus\msf{K} \xrightarrow{\sim} \msf{Zg}(\U)\cap\msf{Def}(\msf{Im}(F))
	\]
    where $\msf{K} = \{X \in \msf{pinj}(\T) : FX = 0\}$.
\end{thmx}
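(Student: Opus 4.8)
The plan is to construct the map directly on the locus where $F$ does not vanish, check continuity and injectivity against the bijection between closed subsets of a Ziegler spectrum and definable subcategories, and then reduce both closedness and the homeomorphism claim to a single surjectivity statement for indecomposable pure injectives in a pushforward. First, $\ker F=\{X\in\T:FX=0\}=F^{-1}(\{0\})$ is a definable subcategory, since $F$ pulls back definable subcategories and $\{0\}$ is a definable subcategory; hence $\msf{K}=\ker F\cap\msf{pinj}(\T)$ is closed in $\msf{Zg}(\T)$. As $F$ is definable it preserves pure injectives, so it maps $\msf{pinj}(\T)$ into $\msf{pinj}(\U)$. If $X$ is an indecomposable pure injective with $X\notin\msf{K}$, fullness on pure injectives yields a surjective ring homomorphism $\mathrm{End}_\T(X)\twoheadrightarrow\mathrm{End}_\U(FX)$; the source is local and the target is nonzero, hence local, so $FX$ is indecomposable pure injective. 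This produces the map $F\colon\msf{Zg}(\T)\setminus\msf{K}\to\msf{Zg}(\U)$ of the statement (defined on the open complement of the closed set $\msf{K}$).

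For injectivity, let $X,X'$ be indecomposable pure injectives outside $\msf{K}$ with $FX\cong FX'$, and lift an isomorphism $FX\to FX'$ and its inverse through fullness to $g\colon X\to X'$ and $h\colon X'\to X$. Then $F(hg)$ is a unit of $\mathrm{End}_\U(FX)$; were $hg$ a non-unit of the local ring $\mathrm{End}_\T(X)$ it would lie in the Jacobson radical, whose image under a surjective ring map lies in the Jacobson radical of the target, contradicting that $F(hg)$ is a unit. So $hg$, and symmetrically $gh$, is an automorphism, whence $g$ is an isomorphism and $X\cong X'$. For continuity, a closed subset of $\msf{Zg}(\U)$ is $\mc{E}\cap\msf{pinj}(\U)$ for a definable subcategory $\mc{E}\subseteq\U$, and its preimage is $(F^{-1}(\mc{E})\cap\msf{pinj}(\T))\cap(\msf{Zg}(\T)\setminus\msf{K})$, which is closed in the subspace $\msf{Zg}(\T)\setminus\msf{K}$ because $F^{-1}(\mc{E})$ is again definable.

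Set $\mc{B}=\msf{Def}(\msf{Im}\,F)$, the pushforward of $\T$. Transporting through $\y$ to the finitely accessible categories $\Flat{\T^{\c}}$ and $\Flat{\U^{\c}}$, where $\hat{F}$ preserves products, pure monomorphisms and filtered colimits, one has $\hat{F}(\msf{Def}\,\mc{S})\subseteq\msf{Def}(\hat{F}\,\mc{S})$ for any class $\mc{S}$; together with $\T=\msf{Def}(\msf{Zg}(\T))$ (the whole space corresponds to the whole category) this gives $\mc{B}=\msf{Def}(\mc{Z})$, where $\mc{Z}:=F(\msf{Zg}(\T)\setminus\msf{K})$ is, by the above, a set of indecomposable pure injectives of $\U$. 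For a set of indecomposable pure injectives, $\msf{Def}(\mc{Z})\cap\msf{pinj}(\U)$ is its Ziegler closure $\bar{\mc{Z}}$, so $\msf{Zg}(\U)\cap\mc{B}=\bar{\mc{Z}}\supseteq\mc{Z}$. Thus it suffices to prove that \emph{for every definable subcategory $\mc{D}\subseteq\T$, every indecomposable pure injective $Y$ of $\U$ lying in $\msf{Def}(F(\mc{D}))$ is isomorphic to $FX$ for some indecomposable pure injective $X\in\mc{D}$}: with $\mc{D}=\T$ this shows the map is onto $\msf{Zg}(\U)\cap\mc{B}$ (hence, with the injectivity above, a bijection onto this set), and with arbitrary definable $\mc{D}$ — noting $F|_{\mc{D}}\colon\mc{D}\to\U$ is again definable and full on pure injectives and $\msf{pinj}(\mc{D})=\mc{D}\cap\msf{pinj}(\T)$ — it shows $F(\msf{Zg}(\mc{D})\setminus\msf{K})=\msf{Zg}(\U)\cap\msf{Def}(F(\mc{D}))$ is closed, so $F$ is a closed map; a continuous closed bijection is a homeomorphism onto its image.

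It remains to prove the displayed surjectivity statement, which is where fullness on pure injectives is indispensable and which I expect to be the main obstacle. The route I would take is to pass to module categories and invoke \cref{intro:extension}: the left exact definable functor $\bar{F}\colon\Mod{\T^{\c}}\to\Mod{\U^{\c}}$ preserves injective objects (as the right adjoint of the exact functor $G$) and, from fullness of $F$ on pure injectives, is full on the injectives coming from pure injectives; moreover $\bar{F}$ carries the finite type localising subcategory $\mc{G}_{\mc{D}}$ attached to $\mc{D}$ into the one $\mc{G}_{F(\mc{D})}$ attached to $\msf{Def}(F(\mc{D}))$, which it generates. Given an indecomposable injective $\y Y\in\mc{G}_{F(\mc{D})}$, one should use $G$, the adjunction unit and counit, injective hulls in $\Mod{\T^{\c}}$, and fullness to exhibit an indecomposable injective $E'\in\mc{G}_{\mc{D}}$ with $\bar{F}E'\cong\y Y$; then $E'\cong\y X$ for an indecomposable pure injective $X\in\mc{D}$, and $\y FX=\bar{F}\y X\cong\y Y$ forces $FX\cong Y$. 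The genuine difficulty is exactly this lifting of an indecomposable injective along $\bar{F}$ within the prescribed localising subcategory using only fullness; the cleanest packaging is likely through the small abelian categories of coherent functors on $\T$ and on $\U$ and the exact functor induced between them by $F$, where fullness should translate into a surjectivity property for the associated Serre (equivalently Gabriel) spectra that yields the claim.
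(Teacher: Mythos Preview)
Your arguments for well-definedness (the local endomorphism ring trick), injectivity, and continuity are correct, but you explicitly leave the surjectivity statement---that every indecomposable pure injective $Y\in\msf{Def}(F\mc{D})$ arises as $FX$ for some $X\in\mc{D}\cap\msf{pinj}(\T)$---unproven, and your sketched route through $\bar{F}$, its exact left adjoint, and lifting indecomposable injectives within localising subcategories is not carried out. Since both the homeomorphism claim and the closedness of the map rest on this surjectivity, the proof is genuinely incomplete.

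The paper does not attempt your direct surjectivity argument at all. Instead it reduces the whole theorem to an already-known result for definable functors between finitely accessible categories. Because $F$ is definable, the composite $\y\circ F=\hat{F}\circ\y\colon\T\to\Flat{\U^{\c}}$ is coherent, and fullness of $F$ on pure injectives transfers via the equivalence $\y\colon\msf{Pinj}(\T)\xrightarrow{\sim}\msf{Pinj}(\Flat{\T^{\c}})$ to fullness of $\hat{F}\colon\Flat{\T^{\c}}\to\Flat{\U^{\c}}$ on pure injectives. At that point the entire statement---including the surjectivity you could not supply---is the content of \cite[Corollary~15.4 and Theorem~15.5]{dac} for definable functors between finitely accessible categories with products. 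One then transports the resulting closed continuous map and homeomorphism back along the homeomorphisms $\y\colon\msf{Zg}(\T)\xrightarrow{\sim}\msf{Zg}(\Flat{\T^{\c}})$ and $\y\colon\msf{Zg}(\U)\xrightarrow{\sim}\msf{Zg}(\Flat{\U^{\c}})$ of \cref{purityflats}\ref{purityflats4}, using \cref{coherentpreimageandimage}\ref{coherentimage} to identify $\msf{Def}(\mrm{Im}\,\hat{F}\y)$ with $\msf{Def}(\y\,\mrm{Im}\,F)$. Your direct arguments rediscover portions of the proofs in \cite{dac}; the paper's point is precisely that the lifting machinery of \cref{uniprop} makes this reduction available, so that no new Ziegler-spectrum argument is needed in the triangulated setting.
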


The framework developed in this paper has already found use in applications of purity to the study of triangulated categories, for instance in the connection with tensor-triangular geometry~\cite{birdwilliamsonhomological}, and interactions between purity and rank functions~\cite{BWZ}. One can also use the theory of definable functors to generalise Balmer’s functoriality of the homological spectrum in tensor-triangular geometry~\cite{Balmerhomsupp}. We intend to return to this in future work.

\subsection*{Acknowledgements}
The first author is grateful for the hospitality of the BIREP group at Universit\"{a}t Bielefeld, where the initial stages of this work were carried out. Both authors are grateful to S. Balchin, G. Garkusha, M. Prest, and A. Zvonareva for their helpful comments at various stages of this project.

Both authors were supported by the grant GA~\v{C}R 20-02760Y from the Czech Science Foundation, the project PRIMUS/23/SCI/006 from Charles University, and by the Charles University Research Center program UNCE/SCI/022. 

\section{Preliminaries}
In this section we recall the necessary background on purity which we will use repeatedly throughout the paper. We refer the reader to \cite{krsmash, krcoh, krbook, psl, dac} for further details, and alternative exposition. Throughout, all categories are at least preadditive, and thus all functors considered are also assumed to be additive, hence the term `functor' will actually, unless stated otherwise, mean additive functor. If $\A$ and $\B$ are preadditive categories, we write $(\A,\B)$ for the category of additive functors $\A\to\B$.
\subsection{Purity in triangulated categories}
Let $\T$ be a compactly generated triangulated category with compact objects $\T^{\c}$. 
\begin{chunk}\label{universalpropertyofyoneda}
We let $\Mod{\T^{\c}}$ denote the category of additive functors $(\T^{\c})^{\op}\to\ab$, which we call (right) modules. There is a canonical way to view the objects of $\T$ in $\Mod{\T^{\c}}$, via the \emph{restricted Yoneda embedding} 
\[
\y\colon\T\to\Mod{\T^{\c}}, \, X\mapsto \Hom_{\T}(-,X)\vert_{\T^{\c}}.
\]
In fact, $\y$ is the universal coproduct-preserving cohomological functor into an AB5 category, in the sense that if $F\colon\T\to\scr{A}$ is a coproduct preserving cohomological functor with $\scr{A}$ being AB5, then there is a unique $G\colon\Mod{\T^{\c}}\to\scr{A}$ such that $F=G\circ \y$, see \cite[Corollary 2.4]{krsmash}.

Now, $\Mod{\T^{\c}}$ is a locally coherent abelian category, meaning its finitely presented objects form an abelian category. These finitely presented objects, denoted $\mod{\T^{\c}}$, are the modules $f\in\Mod{\T^{\c}}$ which have a presentation
\[
\y A\to\y B\to f\to 0
\]
with $A,B\in\T^{\c}$. Since $\y$ is fully faithful on $\T^{\c}$, it follows that a module $f$ is finitely presented if and only if $f\simeq\msf{coker}(\y \alpha)$ for some morphism $\alpha\in\T^{\c}$.
\end{chunk}

\begin{chunk}\label{definitionpureexact}
The restricted Yoneda embedding $\y$ sends any triangle in $\T$ to a long exact sequence in $\Mod{\T^\c}$. The triangles which are sent to short exact sequences are the \emph{pure} triangles. More precisely, the following are equivalent for a triangle $X\to Y\to Z\xrightarrow{f}\Sigma X$:
\begin{enumerate}
\item the triangle is pure;
\item $0\to \y X\to \y Y\to \y Z\to 0$ is exact in $\Mod{\T^{\c}}$;
\item the map $f$ is \emph{phantom}, that is $\y(f)=0$.
\end{enumerate}
In such a pure triangle, the map $X\to Y$ is called a \emph{pure monomorphism}, and $X$ is said to be a \emph{pure subobject} of $Y$. Similarly, we define a \emph{pure epimorphism} and \emph{pure quotient}. Clearly a map $\alpha$ is a pure monomorphism (resp., epimorphism) if and only if $\y(\alpha)$ is a monomorphism (resp., epimorphism).
\end{chunk}

\begin{chunk}\label{pureinjectiveequivalences}
An object $X\in\T$ is pure injective if any of the following equivalent conditions hold:
\begin{enumerate}
\item any pure monomorphism $X\to Y$ splits;
\item $\y X$ is an injective object in $\Mod{\T^{\c}}$;
\item\label{jl} for any set $I$, the canonical summation map $\bigoplus_I X\to X$ factors through $\bigoplus_IX \to \prod_I X$;
\item the induced map $\Hom_{\T}(A,X)\to\Hom(\y A,\y X)$ is a bijection for all $A \in \T$.
\end{enumerate}
We refer to condition (3) as the \emph{Jensen-Lenzing criterion}. We let $\msf{Pinj}(\T)$ denote the full subcategory of $\T$ consisting of the pure injective objects. There is an equivalence of categories $\y\colon \msf{Pinj}(\T)\xrightarrow{\sim}\msf{Inj}(\T^{\c})$, where the latter is the category of injective objects in $\Mod{\T^{\c}}$. As such, there is only a set of isomorphism classes of indecomposable pure injective objects in $\T$, which we denote by $\msf{pinj}(\T)$.
\end{chunk}

\begin{chunk}\label{coherent}
We also consider the locally coherent abelian category of left modules, which is denoted by $(\T^{\c},\ab)$. The functor \[\msf{h}\colon\T^\op\to (\T^{\c},\ab), \, X\mapsto \Hom_{\T}(X,-)\vert_{\T^{\c}}\] similarly enables us to consider objects of $\T$ as modules. A left module $f$ is finitely presented if and only if it has a presentation $\msf{h}A\to\msf{h}B\to f\to 0$, and we let $\msf{fp}(\T^{\c},\ab)$ denote the abelian subcategory of finitely presented left modules.

A functor $F\colon \T \to \ab$ is \emph{coherent} if it has a presentation 
\[
\Hom_{\T}(A,-)\to\Hom_{\T}(B,-)\to F\to 0
\]
with $A,B\in\T^{\c}$. We let $\msf{Coh}(\T,\ab)$ denote the category of coherent functors. This is an abelian category, and it is an immediate observation that there is an equivalence of categories $\msf{fp}(\T^{\c},\ab)\simeq \msf{Coh}(\T,\ab)$ given by extending and restricting presentations.
\end{chunk}

\begin{chunk}
The categories $\mod{\T^{\c}}$ and $\msf{Coh}(\T,\ab)$ are antiequivalent, as proved in \cite[Lemma 7.2]{krcoh}. The mutually inverse functors are
\[
f\mapsto f^\vee\colon (X\mapsto \Hom(f,\msf{y}X))
\]
and
\[
F\mapsto F^\vee\colon (Y\mapsto \Hom(F,\msf{h}Y))
\]
for any object $X\in\T$ and $Y\in\T^{\c}$. The fact that the latter is the inverse is shown at \cite[Lemma 2.5]{birdwilliamsonhomological}.
\end{chunk}

\begin{chunk}
A full subcategory $\mc{D}\subseteq\T$ is said to be \ti{definable} if there is a set of coherent functors $\Phi\subseteq\msf{Coh}(\T,\ab)$ such that
\[
\mc{D}=\{X\in\T:FX=0 \t{ for all }F\in\Phi\}.
\]
Given a class of objects $\msf{X}\subset\T$, we shall let $\msf{Def}(\msf{X})$ denote the smallest definable subcategory containing $\msf{X}$. 
Definable subcategories are closed under products, pure subobjects, pure quotients, and filtered homology colimits (see \cref{homologycolimits} for more details on the latter); this can be seen from the definition by applying~\cite[Theorem A]{krcoh}. Definable subcategories of $\T$ are uniquely determined by the indecomposable pure injective objects they contain: if $\mc{D}_{1}$ and $\mc{D}_{2}$ are definable, then $\mc{D}_{1}=\mc{D}_{2}$ if and only if $\mc{D}_{1}\cap\msf{pinj}(\T)=\mc{D}_{2}\cap\msf{pinj}(\T)$. In particular, the definable subcategory generated by any nonzero object must contain a nonzero indecomposable pure injective, and for any definable subcategory $\mc{D}$, we have $\mc{D} = \msf{Def}(\mc{D} \cap \msf{pinj}(\T))$.
\end{chunk}

\begin{chunk}\label{definitionZieglerspectrum}
The \ti{Ziegler spectrum} of $\T$, denoted $\msf{Zg}(\T)$, is the topological space whose underlying set is $\msf{pinj}(\T)$, and whose closed sets are $\mc{D}\cap\msf{pinj}(\T)$, where $\mc{D}\subseteq\T$ is a definable subcategory.
A subset of $\msf{pinj}(\T)$ is said to be \ti{Ziegler closed} if it is of the form $\mc{D}\cap\msf{pinj}(\T)$ for some definable subcategory $\mc{D}\subseteq\T$. If $\mc{D}\subseteq\T$ is a definable subcategory, we write $\msf{Zg}(\mc{D})$ for the space $\msf{Zg}(\T)\cap\mc{D}$ equipped with the subspace topology.
\end{chunk}

\begin{chunk}\label{fundamentalcorrespondence}
We shall frequently use what is known as the \emph{fundamental correspondence}, which can be found in its totality in \cite{krcoh}. Given a definable subcategory $\mc{D}\subseteq\T$, set \[\scr{S}(\mc{D})=\{f\in\mod{\T^{\c}}:\Hom(f,\y X)=0 \t{ for all }X\in\mc{D}\}.\] Conversely, given a Serre subcategory $\mc{S}\subseteq\mod{\T^{\c}}$, we let \[\scr{D}(\mc{S})=\{X\in\T:\Hom(f,\y X)=0 \t{ for all }f\in\mc{S}\}.\] The fundamental correspondence states that the following diagram is a commutative square of bijections, where each pair of arrows is mutually inverse. In particular, the rightmost pair can be deduced from the rest of the arrows.
\[
\begin{tikzcd}[column sep=1in]
\{\t{Definable subcategories of } \T \} \arrow[r, shift left = 3pt, "\mc{D}\mapsto \mc{D}\cap\msf{pinj}(\T)"] \arrow[d, shift left = 3pt, "\mc{D}\mapsto\scr{S}(\mc{D})"]& \{\t{Closed subsets of }\msf{Zg}(\T)\} \arrow[l, shift left = 3pt, "\msf{X}\mapsto \msf{Def}(\msf{X})"] \arrow[d, shift left = 3pt]\\
\{\t{Serre subcategories of }\mod{\T^{\c}}\} \arrow[u, shift left = 3pt, "\mc{S}\mapsto \scr{D}(\mc{S})"] \arrow[r, shift left = 3pt, "\mc{S}\mapsto \mc{S}^{\vee}"]& \{\t{Serre subcategories of }\msf{Coh}(\T,\ab)\} \arrow[l,shift left = 3pt, "\mc{S}_{\c}\mapsto \mc{S}_{\c}^{\vee}"] \arrow[u, shift left = 3pt]
\end{tikzcd}
\]
It is important to note that the vertical arrows are order reversing bijections, while the horizontal arrows are order preserving bijections.
\end{chunk}

\begin{chunk}\label{flatequivalences}
The subcategory of cohomological functors in $\Mod{\T^{\c}}$ will be central to our study. Recall from~\cite[\S 2.2]{krsmash} that the following are equivalent for a module $F\in\Mod{\T^{\c}}$: \begin{enumerate}[label=(\arabic*)]
\item $F$ is a flat functor;
\item $F$ is a filtered colimit of finitely presented projective functors;
\item $F$ is a cohomological functor; that is it sends triangles in $\T^\c$ to exact sequences in $\ab$;
\item\label{flatandfpinj} $F$ is fp-injective; that is $\mrm{Ext}^{1}(g,F)=0$ for all $g\in\mod{\T^{\c}}$.
\end{enumerate}
We write $\Flat{\T^{\c}}$ for the full subcategory of modules that satisfy the above conditions. We write $\msf{flat}(\T^\c) := \msf{fp}(\Flat{\T^\c})$ for the finitely presented flat modules, which, by \cite[1.3]{CrawleyBoevey}, coincides with $\Flat{\T^\c}\cap\mod{\T^\c}$. Moreover, there is an equivalence of categories $\y\colon\T^{\c}\xrightarrow{\sim}\msf{flat}(\T^{\c})$.
\end{chunk}

\begin{chunk}\label{appendix.definable}
All of the above definitions concerning purity also hold in finitely accessible categories with products. Recall that a category $\scr{A}$ is finitely accessible if it has filtered colimits, the subcategory of finitely presented objects $\msf{fp}(\scr{A})$ is skeletally small, and every object of $\scr{A}$ is a filtered colimit of finitely presented objects. For example, the category $\Flat{\T^\c}$ is a finitely accessible category with products by the discussion in \cref{flatequivalences}. 

For a finitely accessible category with products $\scr{A}$, a short exact sequence $0\to L\to M\to N\to 0$ is pure if and only if $0\to \y L\to \y M\to \y N\to 0$ is exact in $(\msf{fp}(\scr{A})^{\op},\ab)$, in which case $L$ is a \emph{pure subobject}, and $N$ is a \emph{pure quotient} of $M$.

Definable subcategories of $\scr{A}$ are precisely the full subcategories $\mc{D}\subseteq\scr{A}$ such that $\mc{D}$ is closed under direct limits, products and pure subobjects. There is also a set of indecomposable pure injective objects in $\scr{A}$, which is denoted $\msf{pinj}(\scr{A})$, and definable subcategories are uniquely determined by the indecomposable pure injective objects they contain. Moreover, for a definable subcategory $\mc{D}$, one has $A\in\mc{D}$ if and only if there is a pure monomorphism $0\to A\to \prod_{i}E_{i}$, where $E_{i}\in\mc{D}\cap\msf{pinj}(\scr{A})$, as in \cref{fundamentalcorrespondence}.

The Ziegler spectrum of $\scr{A}$, denoted $\msf{Zg}(\scr{A})$ is defined analogously to the triangulated case: the points are the indecomposable pure injectives, and the closed sets are of the form $\mc{D}\cap\msf{pinj}(\scr{A})$, with $\mc{D}\subseteq\scr{A}$ definable. 
\end{chunk}

\begin{chunk}
The point of this is that the category $\Flat{\T^{\c}}$ is a definable subcategory of $\Mod{\T^{\c}}$. Furthermore, since $\y X\in\Flat{\T^{\c}}$ for all $X\in\T$, we see that $\Flat{\T^{\c}}$ is the smallest definable subcategory of $\Mod{\T^{\c}}$ containing the image of $\y$. In fact, we have more, as the following shows.
\end{chunk}
\begin{lem}\label{purityflats}
Let $\T$ be a compactly generated triangulated category.   
\begin{enumerate}[label=(\arabic*)]
\item\label{purityflats0} A functor $X \in \Mod{\T^\c}$ is flat and pure injective if and only if it is injective.
\item\label{purityflats1} The restricted Yoneda embedding induces an equivalence of categories $\msf{y}\colon\msf{Pinj}(\T)\xrightarrow{\sim}\msf{Pinj}(\Flat{\T^{\c}})$.

\item\label{purityflats2} For any definable subcategory $\mc{D}$ of $\T$, we have $\msf{y}(\mc{D} \cap \msf{pinj}(\T)) = \msf{Def}(\msf{y}\mc{D}) \cap \msf{pinj}(\Flat{\T^{\c}})$.

\item\label{purityflats3} There is a bijection between definable subcategories of $\T$ and those of $\msf{Flat}(\T^{\c})$ given by $\mc{D} \mapsto \msf{Def}(\msf{y}\mc{D})$ and $\widetilde{\mc{D}} \mapsto \msf{y}^{-1}(\widetilde{\mc{D}})$, for $\mc{D}$ a definable subcategory of $\T$, and $\widetilde{\mc{D}}$ a definable subcategory of $\Flat{\T^\c}$.
\item\label{purityflats4} For any definable subcategory $\mc{D}$ of $\T$, there is a homeomorphism $\y\colon \msf{Zg}(\mc{D}) \xrightarrow{\sim} \msf{Zg}(\msf{Def}(\y\mc{D}))$. In particular, there is a homeomorphism $\msf{y}\colon \msf{Zg}(\T) \xrightarrow{\sim} \msf{Zg}(\Flat{\T^\c})$.
\end{enumerate}
\end{lem}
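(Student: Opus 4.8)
The plan is to establish the five parts in order, using \ref{purityflats0} and \ref{purityflats1} as the foundation and then bootstrapping \ref{purityflats2}--\ref{purityflats4} from the fundamental correspondence of \cref{fundamentalcorrespondence}. For \ref{purityflats0}: injective objects of the Grothendieck category $\Mod{\T^\c}$ are fp-injective, hence flat by \cref{flatequivalences}, and are always pure injective; conversely, if $X$ is flat---equivalently $\mrm{Ext}^1(g,X)=0$ for all $g\in\mod{\T^\c}$---then every short exact sequence $0\to X\to Y\to Z\to 0$ is pure, so it splits once $X$ is moreover pure injective, making $X$ injective. For \ref{purityflats1}: by \cref{pureinjectiveequivalences} the restricted Yoneda embedding is an equivalence $\y\colon\msf{Pinj}(\T)\xrightarrow{\sim}\msf{Inj}(\Mod{\T^\c})$, whose image lies in $\Flat{\T^\c}$ by \ref{purityflats0}, so it suffices to identify $\msf{Pinj}(\Flat{\T^\c})$ with $\msf{Inj}(\Mod{\T^\c})$ inside $\Flat{\T^\c}$. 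Since $\msf{fp}(\Flat{\T^\c})=\flat{\T^\c}\simeq\T^\c$ by \cref{flatequivalences}, the module category of the finitely accessible category $\Flat{\T^\c}$ is $(\flat{\T^\c}{}^{\op},\ab)\simeq\Mod{\T^\c}$, under which---by a direct Yoneda computation---its restricted Yoneda embedding becomes the inclusion $\Flat{\T^\c}\hookrightarrow\Mod{\T^\c}$; the analogue of \cref{pureinjectiveequivalences} for finitely accessible categories with products then says exactly that $M\in\Flat{\T^\c}$ is pure injective if and only if it is injective in $\Mod{\T^\c}$. Being an equivalence, $\y$ restricts to a bijection $\msf{pinj}(\T)\cong\msf{pinj}(\Flat{\T^\c})$ on isomorphism classes of indecomposable pure injectives.

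The technical heart is the claim that, for every $X\in\T$ and every definable subcategory $\mc D\subseteq\T$, one has $X\in\mc D$ if and only if $\y X\in\msf{Def}(\y\mc D)$. To prove it I identify $\msf{Def}(\y\mc D)$ explicitly: with $\scr S(\mc D)\subseteq\mod{\T^\c}$ the Serre subcategory of \cref{fundamentalcorrespondence}, set $\mc E:=\{M\in\Flat{\T^\c}:\Hom_{\Mod{\T^\c}}(f,M)=0\text{ for all }f\in\scr S(\mc D)\}$. Since each $f\in\scr S(\mc D)$ is finitely presented in the locally coherent category $\Mod{\T^\c}$ and $\Hom(f,-)$ is left exact, $\mc E$ is closed under products, direct limits and pure subobjects, hence is a definable subcategory of $\Flat{\T^\c}$; and $\y\mc D\subseteq\mc E$ by the definition of $\scr S(\mc D)$, so $\msf{Def}(\y\mc D)\subseteq\mc E$. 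Conversely, any $M\in\mc E$ admits a pure monomorphism into a product of objects of $\mc E\cap\msf{pinj}(\Flat{\T^\c})$ by \cref{appendix.definable}; each such object is $\y X_i$ for some $X_i\in\msf{pinj}(\T)$ by \ref{purityflats1}, and $\y X_i\in\mc E$ reads precisely as $X_i\in\scr D(\scr S(\mc D))=\mc D$, whence $\y X_i\in\y\mc D$ and $M\in\msf{Def}(\y\mc D)$. Thus $\mc E=\msf{Def}(\y\mc D)$, and the claim now reads off: $\y X\in\msf{Def}(\y\mc D)=\mc E$ iff $\Hom(f,\y X)=0$ for all $f\in\scr S(\mc D)$ iff $X\in\scr D(\scr S(\mc D))=\mc D$. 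Given the claim, \ref{purityflats2} is immediate in both directions using \ref{purityflats1}, and the claim also yields $\y^{-1}(\msf{Def}(\y\mc D))=\mc D$, so $\mc D\mapsto\msf{Def}(\y\mc D)$ is injective with the stated one-sided inverse.

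To finish \ref{purityflats3} it remains to see that $\y^{-1}(\widetilde{\mc D})$ is a definable subcategory of $\T$ for any definable $\widetilde{\mc D}\subseteq\Flat{\T^\c}$, and that $\msf{Def}(\y(\y^{-1}\widetilde{\mc D}))=\widetilde{\mc D}$. For the first, by the standard description of definable subcategories of a finitely accessible category, $\widetilde{\mc D}$ is cut out by coherent functors $\msf{coker}\big(\Hom_{\Flat{\T^\c}}(\y B_j,-)\to\Hom_{\Flat{\T^\c}}(\y A_j,-)\big)$ with $A_j,B_j\in\T^\c$, and precomposing with $\y$ turns these into the coherent functors $\msf{coker}\big(\Hom_\T(B_j,-)\to\Hom_\T(A_j,-)\big)$ of \cref{coherent}, using $\Hom_{\Mod{\T^\c}}(\y A,\y X)\cong\Hom_\T(A,X)$; their common vanishing locus is exactly $\y^{-1}(\widetilde{\mc D})$. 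The second then follows since $\widetilde{\mc D}$ and $\msf{Def}(\y(\y^{-1}\widetilde{\mc D}))$ have the same preimage under $\y$, and a definable subcategory of $\Flat{\T^\c}$ is determined by its indecomposable pure injectives (\cref{appendix.definable}), which biject with those of $\T$ by \ref{purityflats1}. For \ref{purityflats4}, the same description of definable subcategories by coherent functors, together with \ref{purityflats2}, shows that the bijection $\y\colon\msf{pinj}(\T)\to\msf{pinj}(\Flat{\T^\c})$ carries Ziegler-closed sets to Ziegler-closed sets and pulls them back likewise, hence is a homeomorphism $\msf{Zg}(\T)\xrightarrow{\sim}\msf{Zg}(\Flat{\T^\c})$; restricting this homeomorphism to $\mc D\cap\msf{pinj}(\T)$---whose image is $\msf{Def}(\y\mc D)\cap\msf{pinj}(\Flat{\T^\c})$ by \ref{purityflats2}---gives the homeomorphism $\msf{Zg}(\mc D)\xrightarrow{\sim}\msf{Zg}(\msf{Def}(\y\mc D))$.

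The main obstacle is the claim of the second paragraph: recognising $\msf{Def}(\y\mc D)$---a priori only a closure operation internal to $\Flat{\T^\c}$---as the \emph{explicit} definable subcategory $\mc E$ cut out by the very data $\scr S(\mc D)$ defining $\mc D$ inside $\T$. This is precisely what makes the argument non-circular, since a naive attempt to verify $X\in\mc D\iff\y X\in\msf{Def}(\y\mc D)$ by tracking indecomposable pure injectives loops back on \ref{purityflats2} itself. A secondary point needing care is the identification in \ref{purityflats1} of the pure structure of $\Flat{\T^\c}$ with the one inherited from $\Mod{\T^\c}$, on which everything downstream rests.
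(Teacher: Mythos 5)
Your proof is correct, but it is worth knowing that the paper itself does not prove parts (1)--(4) of this lemma at all: it cites them from an earlier paper of the authors and only proves part (5), by observing that the map is continuous by (4), closed by (3) and bijective by (2) --- which is precisely your final step, so there the two arguments coincide. Everything before that is a self-contained replacement for the citation, and it holds up. The genuinely useful content is your identification of $\msf{Def}(\y\mc{D})$ with the explicit class $\mc{E}=\{M\in\Flat{\T^{\c}}:\Hom(f,M)=0\ \t{for all}\ f\in\scr{S}(\mc{D})\}$: this is definable since each $f$ is finitely presented and $\Hom(f,-)$ is left exact, contains $\y\mc{D}$ by definition of $\scr{S}(\mc{D})$, and is contained in $\msf{Def}(\y\mc{D})$ by the pure-embedding-into-a-product-of-indecomposable-pure-injectives criterion of \cref{appendix.definable} combined with \ref{purityflats1}; this is exactly what makes $X\in\mc{D}\iff\y X\in\msf{Def}(\y\mc{D})$ non-circular, and \ref{purityflats2}--\ref{purityflats4} then fall out as you describe. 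Two steps lean on standard facts that you assert rather than justify and deserve a word: first, the claim in \ref{purityflats1} that $M\in\Flat{\T^{\c}}$ is pure injective in $\Flat{\T^{\c}}$ if and only if it is injective in $\Mod{\T^{\c}}$ follows most cleanly from your own \ref{purityflats0}, since the injective hull of a flat module is flat (injectives are fp-injective) and monomorphisms between flats have flat cokernel by local coherence, so a pure injective object of $\Flat{\T^{\c}}$ splits off its injective hull; second, the description of definable subcategories of a finitely accessible category as zero-classes of cokernels of maps between representables on finitely presented objects is the standard characterisation from \cite{dac} and should be cited, since \cref{appendix.definable} only records the closure-property characterisation.
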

\begin{proof}
Parts \ref{purityflats0}-\ref{purityflats3} may be found in~\cite[Lemma 2.9]{birdwilliamsonhomological}. So it remains to prove \ref{purityflats4}. The map is continuous by \ref{purityflats3}, closed by \ref{purityflats2}, and bijective by \ref{purityflats1}, and as such is a homeomorphism. 
\end{proof}

As such, in terms of purity, there is no distinction whatsoever between the triangulated category $\T$ and the finitely accessible category $\Flat{\T^{\c}}$.

\begin{chunk}\label{definablebuilding}
	To expedite proofs, we will occasionally appeal to what we term \ti{definable building}. We shall say that a class of objects $\msf{X}\subset\T$ definably builds an object $T\in\T$ if $T\in\msf{Def}(\msf{X})$. Since
	\[
	\msf{Def}(\msf{X})=\{T\in\T:\msf{y}T\in\msf{Def}(\msf{y}\msf{X})\}
	\]
	by \cref{purityflats}\ref{purityflats3}, it is clear that $\msf{X}$ definably builds $T$ if and only if $\msf{y}\msf{X}$ definably builds $\msf{y}T$. We will also apply this to classes of objects and say that $\msf{X}$ definably builds a class $\msf{Y}$ if $\msf{Y} \subseteq \msf{Def}(\msf{X})$. In the finitely accessible category $\Mod{\T^{\c}}$, it is more elementary to give an alternative formulation for definable building: if $\mc{A}\subset \Mod{\T^{\c}}$, then $\mc{A}$ definably builds $F\in\Mod{\T^{\c}}$ if and only if there is a pure monomorphism
	\[
	F\to A \t{ where }A\in(\mc{A})^{\rlim,\msf{Prod}},
	\]
	where $(\mc{A})^{\rlim,\msf{Prod}}$ denotes the closure of $\mc{A}$ under direct limits and direct products. In particular, if $\msf{X}\subset\T$ definably builds $T\in\T$, then there is a pure monomorphism
	\[
	\msf{y}T\to X
	\]
	for some $X\in(\msf{y}\msf{X})^{\rlim,\msf{Prod}}$.
\end{chunk}

\begin{chunk}\label{definablelfp}
Suppose $\scr{A}$ and $\scr{B}$ are finitely accessible categories with products, and $\mc{D}\subseteq\scr{A}$ and $\mc{D}'\subseteq\scr{B}$ are definable subcategories. The appropriate notion of a purity-preserving functor between $\mc{D}$ and $\mc{D}'$ is that of a \emph{definable functor}. A functor $F\colon\mc{D}\to\mc{D}'$ is \emph{definable} if it preserves direct limits and direct products. Information on these functors can be found at \cite[\S 13]{dac}. The pertinent point is that definable functors preserve pure exact sequences and pure injective objects. In the algebraic setting we follow, definable functors were first discussed in \cite{kredc}, while in the model theoretic setting they were considered in \cite{prestinterp}.
\end{chunk}

\subsection{Triangles and enhancements}
We finally record some miscellaneous background facts which we will occasionally use.
\begin{chunk}\label{triangulatedbackground}
By Brown representability, a triangulated functor $F\colon \T \to \U$ between compactly generated triangulated categories has a right adjoint if and only if it preserves coproducts, and has a left adjoint if and only it preserves products. If $F \dashv G$ is an adjunction, then by a simple adjunction argument one sees that $F$ preserves compacts if and only if $G$ preserves coproducts. We refer the reader to~\cite[\S 5]{Krausenotes} for more details. We emphasize that throughout the paper we avoid assuming that functors are triangulated wherever possible. In essence, this is possible precisely because the pure triangles in a triangulated category depend only on the additive structure. However, we will frequently compare results with more familiar triangulated versions, where these recollections will be used.
\end{chunk}

\begin{chunk}\label{homologycolimits}
Since triangulated categories in general do not admit colimits, it is often convenient to work with the weaker notion of homology colimits, see \cite{krcoh}. If $(X_i)_{i \in I}$ is a diagram in a compactly generated triangulated category $\T$, the homology colimit $\msf{homcolim}_I X_i$ is an object of $\T$ together with a cone $(X_i) \to \msf{homcolim}_I X_i$ such that the induced map \[\msf{colim}_I \y X_i \to \y(\msf{homcolim}_I X_i)\] is an isomorphism in $\Mod{\T^\c}$. 
Note that in a compactly generated triangulated category, any object is isomorphic to a homology colimit of a filtered system of compacts. Indeed, if $X\in\T$, we may consider $\y X\in\Flat{\T^{\c}}$. There is then a filtered system $(\y A_{i}, \y f_{ij})_{i,j}$ such that $\rlim_{I}\y A_{i}=\y X$. Since each $A_{i}$ is compact, this system descends along $\y$ to a filtered system $(A_{i}, f_{ij})$. Similarly, the colimit maps $\y A_{i}\to \y X$ are of the form $\y f_{i}$, where $f_{i}\colon A_{i}\to X$. By construction, and definition, it is clear that the object $X$ together with the maps $f_{i}$ exhibit $X$ as the homology colimit of the filtered system $(A_{i},f_{ij})$.
\end{chunk}

\begin{chunk}\label{enhancements}
Occasionally we consider the case when our compactly generated triangulated category $\T$ has an $\infty$-categorical enhancement, by which we mean, that there exists a stable $\infty$-category $\scr{C}$ such that $h\scr{C}$ is triangulated equivalent to $\T$. We note that as $\T$ is compactly generated, any such enhancement $\scr{C}$ is automatically presentable~\cite[Corollary 1.4.4.2 and Remark 1.4.4.3]{HA}. To link this back to purity, we note that if $\T$ has an enhancement, then a full subcategory $\mc{D}$ of $\T$ is definable if and only if it is closed under products, filtered homotopy colimits, and pure subobjects, or equivalently closed under products, pure subobjects, and pure quotients, see~\cite[Theorem 3.12]{laking} and~\cite[Proposition 6.8]{dualitypairs}. We say that $F\colon \T \to \U$ arises from a functor of $\infty$-categories, if there exists an $\infty$-functor between enhancements for $\T$ and $\U$ whose derived functor is $F$. 
\end{chunk}


\section{The universal property of coherent functors}\label{sec:universalprop}

In this section we introduce coherent functors into finitely accessible categories; this definition provides the appropriate notion of a purity preserving functor from a triangulated category to a finitely accessible category. The crucial result of this section, and arguably of the paper, is that the restricted Yoneda embedding is universal among all coherent functors. This enables us to extend coherent functors to definable functors of finitely accessible categories. Firstly, we formalise what we mean by a coherent functor.

\begin{defn}\label{acoherent}
Let $\scr{A}$ be a finitely accessible category with products, and $\T$ be a compactly generated triangulated category. A functor $H\colon \T \to \scr{A}$ is said to be \emph{coherent} if it preserves coproducts and products, and sends pure triangles to pure exact sequences. We denote the category of coherent functors $\T\to\scr{A}$ by $\msf{Coh}(\T, \scr{A})$.
\end{defn}

We give examples of some naturally arising coherent functors in \cref{cohexamples}. We use the term `coherent' following Krause in \cite{krcoh}, who considered these functors in the case when $\scr{A}=\ab$. The relationship between the coherent functors of \cite{krcoh} and those defined above is discussed in \cref{abcoh}.

As illustrated in \cref{purityflats}, the pure structure of a compactly generated triangulated category $\T$ is identical to that on the category $\Flat{\T^{\c}}$. In particular, one would hope, or expect, that a coherent functor $H\colon\T\to\scr{A}$ would, in some way interact well with purity preserving functors $\Flat{\T^{\c}}\to\scr{A}$. In fact, investigating this relationship provides the main result of this section. 

\begin{thm}\label{uniprop}
Let $\T$ be a compactly generated triangulated category, and let $H\colon \T \to \scr{A}$ be a functor, where $\scr{A}$ is a finitely accessible category with products. Then the following are equivalent:
\begin{enumerate}[label=(\arabic*)]
\item\label{coherentitem1} the functor $H\colon \T \to \scr{A}$ is coherent;
\item\label{coherentitem2} there exists a definable functor $\hat{H}\colon \Flat{\T^\c} \to \scr{A}$ such that $H = \hat{H} \circ \msf{y}$.
\end{enumerate}
Moreover, any such definable lift $\hat{H}\colon \Flat{\T^\c} \to \scr{A}$ is unique.
\end{thm}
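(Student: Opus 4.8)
The plan is to prove both implications of the equivalence, with the implication $\ref{coherentitem1}\Rightarrow\ref{coherentitem2}$ being the substantial one, together with uniqueness.

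\textbf{The easy direction.} First I would dispatch $\ref{coherentitem2}\Rightarrow\ref{coherentitem1}$. Suppose $\hat{H}\colon\Flat{\T^\c}\to\scr{A}$ is definable with $H=\hat{H}\circ\y$. We need to check $H$ preserves coproducts, products, and sends pure triangles to pure exact sequences. The functor $\y\colon\T\to\Flat{\T^\c}$ preserves coproducts (it is coproduct-preserving and cohomological by \cref{universalpropertyofyoneda}) and products (a short computation, or by the fact that $\Flat{\T^\c}$ is closed under products in $\Mod{\T^\c}$ and $\y$ preserves products into $\Mod{\T^\c}$), and a definable functor preserves both filtered colimits and products, hence coproducts and products. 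Since $\y$ sends a pure triangle to a pure (short) exact sequence in $\Flat{\T^\c}$ by \cref{definitionpureexact}, and definable functors preserve pure exact sequences by \cref{definablelfp}, the composite $H=\hat{H}\circ\y$ sends pure triangles to pure exact sequences. Hence $H$ is coherent.

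\textbf{The hard direction.} For $\ref{coherentitem1}\Rightarrow\ref{coherentitem2}$, the strategy follows the overview given after \cref{thmA}. I would first embed $\msf{Coh}(\T,\scr{A})$ into a category of exact functors: a coherent $H$ restricted to $\T^\c$ (or rather the data it carries) determines, via the antiequivalence $\msf{fp}(\T^\c,\ab)\simeq\msf{Coh}(\T,\ab)$ and the finitely-accessible yoga, an exact functor out of $\mod{\T^\c}$ — or better, since $\Flat{\T^\c}$ has finitely presented objects $\msf{flat}(\T^\c)\simeq\T^\c$, the restriction of $H$ to $\T^\c$ is just an additive functor $\T^\c\to\scr{A}$. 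The key input is Crawley-Boevey's theory from \cite{CrawleyBoevey}: a definable (direct-limit and product preserving) functor $\Flat{\T^\c}\to\scr{A}$ is determined by, and can be built from, its restriction to $\msf{fp}(\Flat{\T^\c})=\msf{flat}(\T^\c)\simeq\T^\c$, with the compatibility being controlled by a suitable exactness/interpolation condition. So the candidate is: define $\hat{H}$ on $\msf{flat}(\T^\c)$ via $H|_{\T^\c}$, then extend to all of $\Flat{\T^\c}$ by continuity (filtered colimits), checking this extension is well-defined and definable. The subtle point — and the place where the hypothesis that $H$ preserves \emph{products} and sends \emph{all} pure triangles (not merely those of compacts) to pure exact sequences gets used — is verifying that this extension both exists (the coherence of $H$ guarantees the relations needed for $\hat{H}$ to be well-defined on colimit presentations) and satisfies $\hat{H}\circ\y=H$ on all of $\T$, not just on $\T^\c$. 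To establish the latter: every $X\in\T$ is a homology colimit of a filtered system of compacts $(A_i)$ by \cref{homologycolimits}, so $\y X=\rlim_i\y A_i$ in $\Flat{\T^\c}$; then $\hat{H}(\y X)=\hat{H}(\rlim\y A_i)=\rlim\hat{H}(\y A_i)=\rlim H(A_i)$, and one must match this against $HX$. This is exactly where one needs $H$ to preserve the relevant (filtered homology) colimits — which is \emph{not} part of the definition of coherent, so it must be \emph{derived} from coproduct-, product-, and pure-triangle-preservation. The mechanism: the canonical map $\msf{homcolim}_i A_i\to X$ fits into a pure triangle (the telescope construction yields $\bigoplus A_i\to\bigoplus A_i\to\msf{homcolim}\to\Sigma(\bigoplus A_i)$, and when this homcolim equals $X$ on the nose the connecting map is phantom), so applying $H$ and using that $H$ preserves the coproducts and sends that pure triangle to a pure exact sequence forces $HX$ to be the corresponding colimit in $\scr{A}$, computed as the cokernel of $\bigoplus HA_i\xrightarrow{1-\text{shift}}\bigoplus HA_i$, which agrees with $\rlim H(A_i)=\hat H(\y X)$. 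The cleaner route, matching the paper's stated proof outline, is to embed everything into the universal locally coherent envelope of $\Flat{\T^\c}$ via further Yoneda embeddings and unravel the resulting chain of equivalences of functor categories; I would set up that diagram of Yoneda embeddings and transport $H$ through it, then identify the transported functor with a definable functor on $\Flat{\T^\c}$.

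\textbf{Uniqueness.} For the final clause, suppose $\hat{H}_1,\hat{H}_2\colon\Flat{\T^\c}\to\scr{A}$ are both definable with $\hat{H}_i\circ\y=H$. Then $\hat{H}_1$ and $\hat{H}_2$ agree on the image of $\y$, in particular on $\y\T^\c=\msf{flat}(\T^\c)=\msf{fp}(\Flat{\T^\c})$ (up to equivalence). Since every object of $\Flat{\T^\c}$ is a filtered colimit of objects of $\msf{flat}(\T^\c)$ and both $\hat{H}_i$ preserve filtered colimits, a standard argument — agreement on finitely presented objects plus continuity forces agreement everywhere — gives $\hat{H}_1\simeq\hat{H}_2$. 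One must be slightly careful that the natural isomorphism $\hat H_1\cong\hat H_2$ on $\msf{flat}(\T^\c)$ is compatible with the colimit structure, but this is exactly the content of \cite[\S 13]{dac} or \cite{CrawleyBoevey}: restriction to finitely presented objects is faithful on definable functors and full up to the interpolation condition, so a definable functor is determined up to isomorphism by its restriction to $\msf{fp}$. I expect the main obstacle to be the hard direction — specifically, extracting from the three bland-looking properties in \cref{acoherent} the stronger statement that $H$ preserves filtered homology colimits, and then threading $H$ through the tower of Yoneda embeddings and Crawley-Boevey equivalences so that the resulting definable functor genuinely restricts to $H$ along $\y$ rather than merely along $\y|_{\T^\c}$.
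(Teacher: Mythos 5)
Your easy direction is fine (the paper dispatches it in one line the same way). The substantive problem is in the hard direction: your concrete candidate for $\hat{H}$ is the filtered-colimit-preserving extension of $H\vert_{\T^\c}$ along $\flat{\T^\c}\hookrightarrow\Flat{\T^\c}$, i.e.\ a left Kan extension. Such an extension preserves direct limits by construction, but you never say why it preserves \emph{products}, and that is the entire content of the theorem: product preservation of the extension is not a formal consequence of product preservation of $H$, since products in $\Flat{\T^\c}$ do not interact simply with filtered-colimit presentations. This is exactly why the paper does not extend $H\vert_{\T^\c}$ directly but dualises instead: from $H$ one builds the functor $f\mapsto f\up_{\scr{A}}\circ H$ from $\mod{\msf{fp}(\scr{A})^\op}$ into $\msf{Coh}(\T,\ab)$ (note the reversal of direction relative to your ``exact functor out of $\mod{\T^\c}$''), checks its exactness pointwise using that each $f\up_{\scr{A}}\circ H$ is coherent (\cref{definabletoexact}), and then invokes the Crawley-Boevey/Krause equivalence between exact functors on the $\msf{mop}$ categories and definable functors (\cref{thm:deftoex}, \cref{exactequivdefinable}). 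Product preservation of $\hat{H}$ is encoded in exactness on the dual side; your ``interpolation condition'' remark gestures at this mechanism but does not supply it, and the Kan extension you actually propose is insufficient.

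Two further points. First, your telescope argument for $H(\msf{homcolim}_I A_i)\simeq\rlim_I HA_i$ only works as stated for sequential systems; for a general filtered $I$ the relevant map $\bigoplus_{i\to j}A_i\to\bigoplus_i A_i$ is not a pure monomorphism, so one cannot read off the colimit as a cokernel of a pure triangle. The paper instead reduces to $\ab$-valued coherent functors via $\Hom_{\scr{A}}(A,H(-))$ for $A\in\msf{fp}(\scr{A})$ and cites Krause (\cref{enhanceddirectlimits}); the remaining identification $\hat{H}\y X=HX$ for non-compact $X$ still requires the explicit unravelling carried out in \cref{commutesoncompacts}, which you defer to rather than perform. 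Second, your uniqueness argument is correct and is genuinely different from the paper's: you use that a direct-limit-preserving functor on the finitely accessible category $\Flat{\T^\c}$ is determined up to natural isomorphism by its restriction to $\msf{fp}(\Flat{\T^\c})=\flat{\T^\c}\simeq\T^\c$, whereas the paper compares the two functors on the first two terms of a pure injective resolution, using the equivalence $\y\colon\msf{Pinj}(\T)\xrightarrow{\sim}\msf{Pinj}(\Flat{\T^\c})$ and left exactness of definable functors on pure exact sequences. Your route is arguably cleaner, provided you note (as you do) that the identification on finitely presented objects is natural because $\y\vert_{\T^\c}$ is an equivalence.
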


\begin{rem}\label{whyflatsremark}
The reader may wonder why the lift is only defined on flat functors rather than on the whole module category. One reason for this is that the pure structure contained in the flat functors is equivalent to that in the triangulated category by \cref{purityflats}. Nonetheless, the existence of a functor on flat objects often implies the existence of an extension to the whole module category by universal property, as demonstrated in \cref{moduleextension}, also see \cref{leftexactextension} for a related statement.
\end{rem}

Before proceeding with the proof of the main result of this section, we give an alternative characterisation of coherent functors. In the case when $\scr{A} = \ab$ this appears in~\cite[Theorem A and Proposition 5.1]{krcoh}.

\begin{prop}\label{enhanceddirectlimits}
	Let $\T$ be a compactly generated triangulated category, and let $\scr{A}$ be a finitely accessible category with products. Then a functor $H\colon \T \to \scr{A}$ is coherent if and only if $H$ preserves products and sends filtered homology colimits to direct limits.
\end{prop}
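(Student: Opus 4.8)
The plan is to prove both implications by reducing everything to the module category $\Mod{\T^\c}$ and exploiting the equivalence between the pure structure of $\T$ and that of $\Flat{\T^\c}$ recorded in \cref{purityflats}, together with the observation in \cref{homologycolimits} that homology colimits of filtered systems in $\T$ correspond exactly to honest filtered colimits in $\Flat{\T^\c}$ under $\y$.

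For the forward direction, suppose $H$ is coherent. It already preserves products by definition, so the content is that $H$ sends a filtered homology colimit $X = \msf{homcolim}_I X_i$ to the direct limit $\msf{colim}_I HX_i$ in $\scr{A}$. The idea is to realise the homology colimit as an iterated pure construction and use that $H$ preserves pure triangles and coproducts. Concretely, for a filtered system there is a standard telescope-type pure triangle
\[
\bigoplus_{i \in I} X_i \xrightarrow{1 - \msf{shift}} \bigoplus_{i \in I} X_i \to X \to \Sigma\bigoplus_{i \in I} X_i,
\]
which is pure precisely because its image under $\y$ is the usual short exact sequence computing $\msf{colim}_I \y X_i$ in the AB5 category $\Mod{\T^\c}$ (this is the standard presentation of a filtered colimit by the cokernel of a map of coproducts, and it is pure since $\y X$ is flat, hence fp-injective, by \cref{flatequivalences}). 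Applying $H$ and using that $H$ preserves coproducts and pure triangles, we get a short exact sequence in $\scr{A}$ exhibiting $HX$ as the cokernel of the corresponding map $\bigoplus_I HX_i \to \bigoplus_I HX_i$; since $\scr{A}$ is finitely accessible (so the analogous presentation computes direct limits there), this cokernel is $\msf{colim}_I HX_i$, and one checks the comparison map is the canonical one. Some care is needed to confirm that the map $1 - \msf{shift}$ and the resulting cokernel genuinely compute the filtered colimit in $\scr{A}$ and not merely an abstract cokernel; this is where finite accessibility of $\scr{A}$ is essential, and it is the first place the argument could get delicate.

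For the converse, suppose $H$ preserves products and sends filtered homology colimits to direct limits. Preservation of coproducts is the special case of a filtered homology colimit over the poset of finite subsets of the index set, so $H$ preserves coproducts. It remains to show $H$ sends a pure triangle $X \to Y \to Z \to \Sigma X$ to a pure exact sequence. The key point is that in a pure triangle $Y$ is a filtered homology colimit of objects of the form $X \oplus Z_\lambda$ with $Z_\lambda$ ranging over a filtered system of ``finitely presented pieces'' of $Z$ — more precisely, writing $\y Z = \msf{colim}_\lambda g_\lambda$ as a filtered colimit with $g_\lambda \in \flat{\T^\c}$, the pure triangle is the homology colimit of the split triangles $X \to X \oplus Z_\lambda \to Z_\lambda$, since purity is exactly the statement that $\y$ of the connecting map vanishes. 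Then $H$ applied to each split triangle gives a split short exact sequence, and passing to the direct limit (which $H$ preserves, and which is exact in $\scr{A}$) yields that $0 \to HX \to HY \to HZ \to 0$ is a filtered colimit of split exact sequences, hence pure exact in $\scr{A}$. Alternatively, and perhaps more cleanly, one can invoke the forward direction of \cref{uniprop} in reverse: pure triangles in $\T$ are precisely the filtered homology colimits of split triangles, a fact that itself follows from \cref{homologycolimits} applied inside $\Flat{\T^\c}$, where every pure exact sequence is a filtered colimit of split exact sequences by Crawley-Boevey's description of finitely accessible categories.

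The main obstacle I anticipate is bookkeeping the comparison maps: in both directions one must check not just that $HX$ is \emph{abstractly} isomorphic to the relevant (co)limit, but that the \emph{canonical} comparison morphism $\msf{colim}_I HX_i \to H(\msf{homcolim}_I X_i)$ is an isomorphism, which forces one to track the cone maps through the pure-triangle presentations carefully. A secondary subtlety is ensuring the ``telescope'' pure triangle exists and is pure in the needed generality — this is where one leans on the fact, recalled in \cref{homologycolimits}, that any object of $\T$ is a homology colimit of compacts and that $\y$ detects purity.
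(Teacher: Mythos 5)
Your direction ``preserves products and filtered homology colimits $\Rightarrow$ coherent'' is essentially the paper's argument (the paper additionally notes that finite coproducts are preserved because they agree with finite products, which you should not skip), but your other direction has a genuine gap: the telescope triangle
\[
\bigoplus_{i} X_i \xrightarrow{\,1-\msf{shift}\,} \bigoplus_{i} X_i \to X \to \Sigma\bigoplus_i X_i
\]
exists, and has pure monomorphic first map, only for \emph{sequential} ($\mathbb{N}$-indexed) systems. For a general filtered diagram the standard presentation of $\msf{colim}_I \y X_i$ in $\Mod{\T^\c}$ is the coequalizer-type sequence $\bigoplus_{\alpha\colon i\to j}\y X_i \to \bigoplus_i \y X_i \to \msf{colim}_I \y X_i \to 0$, in which the two coproducts are indexed by different sets and, crucially, the left-hand map is \emph{not} a monomorphism (its kernel is already nontrivial for a constant diagram over a poset with a relation $i<j<k$). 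Hence it cannot be $\y$ of a pure triangle, and no ``telescope-type pure triangle'' presenting a general filtered homology colimit exists in $\T$ — this is precisely why the paper works with the weak notion of homology colimit rather than honest filtered homotopy colimits. Your argument therefore only proves the statement for sequential homology colimits. A secondary issue is that $\scr{A}$ is merely finitely accessible with products, so it need not have cokernels at all, and identifying an abstract cokernel with the canonical filtered colimit would require a further argument even in the sequential case.

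The paper circumvents all of this by a reduction to the case $\scr{A}=\ab$: since $\scr{A}$ is finitely accessible, the comparison map $\theta\colon\rlim_I HX_i\to H(\msf{homcolim}_I X_i)$ is an isomorphism if and only if $\Hom_{\scr{A}}(A,\theta)$ is one for every $A\in\msf{fp}(\scr{A})$; the composite $\Hom_{\scr{A}}(A,H(-))\colon\T\to\ab$ is coherent in Krause's sense, so it sends filtered homology colimits to filtered colimits by \cite[Proposition 5.1]{krcoh}, and finite presentability of $A$ lets one pull the colimit back out of the Hom. I recommend adopting that reduction. Finally, be careful with your proposed alternative of ``invoking \cref{uniprop} in reverse'': the proof of \cref{uniprop} (specifically \cref{commutesoncompacts}) already uses the conclusion of \cref{enhanceddirectlimits}, so that route risks circularity.
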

\begin{proof}
	Firstly suppose that $H$ preserves products and sends filtered homology colimits to direct limits. As any pure triangle is a filtered homology colimit of split triangles, and any pure exact sequence is a direct limit of split exact sequences, $H$ preserves pure triangles. Now any coproduct may be written as a filtered homology colimit of finite coproducts by taking the colimit over the set of finite subsets filtered by inclusion. As $F$ preserves products, it preserves finite coproducts and hence $H$ preserves arbitrary coproducts.
	
	We now show the converse, so we suppose that $H$ is coherent. We will show that the natural map $\theta\colon\rlim_{I} FX_{i}\to F(\msf{homcolim}_{I}X_{i})$ is an isomorphism for any filtered system $\{X_{i}\}_{I}$. As $\scr{A}$ is finitely accessible, it suffices to show that $\Hom_{\scr{A}}(A,\theta)$ is an isomorphism for every $A\in\msf{fp}(\scr{A})$. As $H$ preserves coproducts, products and pure triangles, $\Hom_{\scr{A}}(A,H(-))\colon \T\to\ab$ is coherent, and thus sends filtered homology colimits to filtered colimits by \cite[Proposition 5.1]{krcoh}. Consequently, $\Hom_{\scr{A}}(A,H(\msf{homcolim}_{I}X_{i}))\simeq \rlim_{I}\Hom_{\U}(A,HX_{i})$, and the right hand side is moreover equivalent to $\Hom_{\scr{A}}(A,\rlim_{I}FX_{i})$, as $A$ is finitely presented. Therefore $H$ sends filtered homology colimits to direct limits as required.
\end{proof}


\subsection{The proof of \cref{uniprop}}
We now turn our attention to proving \cref{uniprop}. As the proof relies heavily on the tools of finitely accessible categories and the functors between them, before commencing we recall some constructions, terminology, and results. Most of the constructions relating to finitely accessible categories can be found in \cite{CrawleyBoevey}, while the discussions of functors between such categories rely more on \cite{Krauselfp}.

\begin{chunk}\label{appendix.D}
Let us first recall how one can embed a finitely accessible category with products $\mathscr{A}$ into a locally coherent abelian category. Given a small preadditive category $\C$, following \cite{Krauselfp}, we set $\msf{mop}(\C):=\mod{\C^{\op}}^{\op}=\msf{fp}(\C,\ab)^{\op}$. Now, given a finitely accessuble category $\scr{A}$ with products, following~\cite[(3.3)]{CrawleyBoevey} we define $\mbb{D}(\mathscr{A}) := \Flat{\msf{mop}(\msf{fp}(\mathscr{A}))}$; we use the blackboard bold font in order to distinguish it from the derived category. By \cite[(3.3)]{CrawleyBoevey} this is a locally coherent category such that $\msf{fpInj}(\mbb{D}(\scr{A}))$ is finitely accessible.

There is a fully faithful functor $\msf{d}_\scr{A}\colon \scr{A} \to \mbb{D}(\scr{A})$ which preserves products and direct limits (hence is definable) which is defined by universal property: it is the unique functor commuting with direct limits making the diagram
\[\begin{tikzcd}
\msf{fp}(\scr{A})\arrow[r, "\msf{h}"] \arrow[d, hookrightarrow] & \msf{mop}(\msf{fp}(\scr{A})) \arrow[d, "\msf{y}"] \\
\scr{A} \arrow[r, "\msf{d}_{\scr{A}}"'] & \mbb{D}(\scr{A})	
\end{tikzcd}\]
commute. Moreover, $\msf{d}_\scr{A}$ induces an equivalence $\msf{d}_\scr{A}\colon \scr{A} \xrightarrow{\sim} \msf{fpInj}(\mbb{D}(\scr{A}))$, and the assignment $\scr{A} \mapsto \mbb{D}(\scr{A})$ gives, by \cite[Corollary 9.8]{Krauselfp}, a bijection between finitely accessible categories with products and locally coherent categories with enough fp-injectives.
\end{chunk}

\begin{chunk}\label{extodef}
Let $\scr{A}$ and $\scr{B}$ be finitely accessible categories with products, and let $G\colon \scr{A} \to \scr{B}$ be a definable functor in the sense of \cref{definablelfp}. From $G$, one may construct an exact functor $\epsilon(G)\colon \msf{mop}(\msf{fp}(\scr{B})) \to \msf{mop}(\msf{fp}(\scr{A}))$ as follows. By applying~\cite[Universal Property 10.4, Corollary 10.5]{Krauselfp} we obtain a functor $G^*\colon \mbb{D}(\scr{A}) \to \mbb{D}(\scr{B})$ such that $G^*d_\scr{A} = d_\scr{B}G$, and since $G$ is definable, $G^*$ has a left adjoint $G_!$ which is exact and preserves finitely presented objects. As such we obtain $G_!\vert\colon \msf{fp}(\mbb{D}(\scr{B})) \to \msf{fp}(\mbb{D}(\scr{A}))$. 

Now by definition of $\mbb{D}(\scr{A})$ (cf. \cref{appendix.D}), we have $\msf{fp}(\mbb{D}(\scr{A})) = \flat{\msf{mop}(\msf{fp}(\scr{A}))}$, and therefore the Yoneda functor $\y\colon \msf{mop}(\msf{fp}(\scr{A})) \to \msf{fp}(\mbb{D}(\scr{A}))$ is an equivalence. From this, we define $\epsilon(G)$ to be the unique functor making
\[\begin{tikzcd}
\msf{mop}(\msf{fp}(\scr{B})) \ar[r, "\epsilon(G)"] \ar[d, "\y"', "\simeq"] & \msf{mop}(\msf{fp}(\scr{A})) \ar[d, "\y", "\simeq"'] \\
\msf{fp}(\mbb{D}(\scr{B})) \ar[r, "G_!\vert"'] & \msf{fp}(\mbb{D}(\scr{A}))
\end{tikzcd}\]
commute. 

Conversely, given an exact functor $\Phi\colon\msf{mop}(\msf{fp}(\scr{B})) \to \msf{mop}(\msf{fp}(\scr{A}))$ we now construct a definable functor $\delta(\Phi)\colon \scr{A} \to \scr{B}$. As in the above diagram, $\Phi$ gives a functor $\tilde{\Phi}\colon \msf{fp}(\mbb{D}(\scr{B})) \to \msf{fp}(\mbb{D}(\scr{A}))$ and by~\cite[Theorem 10.1]{Krauselfp} this induces an exact, direct limit preserving functor $\tilde{\Phi}^*\colon \mbb{D}(\scr{B}) \to \mbb{D}(\scr{A})$ which has a right adjoint $\tilde{\Phi}_*$. Moreover, this right adjoint $\tilde{\Phi}_*$ preserves direct limits and fp-injective objects. As $d_\scr{A}\colon \scr{A} \to \msf{fpInj}(\mbb{D}(\scr{A}))$ is an equivalence by \cref{appendix.D}, we may define $\delta(\Phi)$ to be the unique functor making 
\[\begin{tikzcd}
\scr{A} \ar[r, "\delta(\Phi)"] \ar[d, "d_\scr{A}"', "\simeq"] & \scr{B} \ar[d, "d_\scr{B}", "\simeq"'] \\
\msf{fpInj}(\mbb{D}(\scr{A})) \ar[r, "\tilde{\Phi}_*\vert"'] & \msf{fpInj}(\mbb{D}(\scr{B}))
\end{tikzcd}\]
commute. Combining these constructions, one obtains the following theorem:
\end{chunk}

\begin{thm}[{\cite[Theorem 11.2]{Krauselfp}}]\label{thm:deftoex}
Let $\scr{A}$ and $\scr{B}$ be finitely accessible categories with products. The assignments $\epsilon(-)$ and $\delta(-)$ from \cref{extodef} give an equivalence of categories
\[\msf{Def}(\scr{A}, \scr{B})^\op \simeq \msf{Ex}(\msf{mop}(\msf{fp}(\scr{B})), \msf{mop}(\msf{fp}(\scr{A})))\]
between the opposite of the category of definable functors from $\scr{A}$ to $\scr{B}$, and the category of exact functors from $\msf{mop}(\msf{fp}(\scr{B}))$ to $\msf{mop}(\msf{fp}(\scr{A}))$.
\end{thm}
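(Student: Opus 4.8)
This is \cite[Theorem 11.2]{Krauselfp}, so one option is simply to cite it; but let me indicate how the argument assembles from the ingredients recalled in \cref{appendix.D} and \cref{extodef}. The plan has three stages: first, check that $\epsilon(-)$ and $\delta(-)$ take values in the asserted categories; second, that each is functorial, with the composition law twisted by $(-)^{\op}$ on the $\msf{Def}$ side; third, that the two assignments are mutually quasi-inverse. Throughout one works not with objects directly but through the dictionary $\scr{A}\mapsto\mbb{D}(\scr{A})$ between finitely accessible categories with products and locally coherent categories with enough fp-injectives, together with the universal properties of $\mbb{D}(-)$ from \cite[Theorem 10.1, Universal Property 10.4]{Krauselfp}.

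\emph{Stage one.} Given a definable $G\colon\scr{A}\to\scr{B}$, the induced $G^*\colon\mbb{D}(\scr{A})\to\mbb{D}(\scr{B})$ has an exact left adjoint $G_!$ preserving finitely presented objects, so $G_!\vert\colon\msf{fp}(\mbb{D}(\scr{B}))\to\msf{fp}(\mbb{D}(\scr{A}))$ is exact; since the Yoneda functors $\y\colon\msf{mop}(\msf{fp}(-))\xrightarrow{\sim}\msf{fp}(\mbb{D}(-))$ are equivalences, $\epsilon(G)$ is exact. Conversely, given an exact $\Phi$, the functor $\tilde\Phi^*\colon\mbb{D}(\scr{B})\to\mbb{D}(\scr{A})$ from \cite[Theorem 10.1]{Krauselfp} has a right adjoint $\tilde\Phi_*$ which preserves direct limits and fp-injectives; transporting along the equivalences $d_{-}\colon(-)\xrightarrow{\sim}\msf{fpInj}(\mbb{D}(-))$ of \cref{appendix.D} shows $\delta(\Phi)$ preserves direct limits, and it preserves products because $\tilde\Phi_*$ does (being a right adjoint) and each $d_{-}$ preserves products. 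Hence $\delta(\Phi)$ is definable.

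\emph{Stage two.} Here uniqueness in the universal properties does the work. For composable definable functors, uniqueness in the universal property of $G^*$ forces $(G'\circ G)^*\simeq (G')^*\circ G^*$, hence $(G'\circ G)_!\simeq G_!\circ (G')_!$ by uniqueness of adjoints; restricting to finitely presented objects and transporting along $\y$ gives $\epsilon(G'\circ G)\simeq\epsilon(G)\circ\epsilon(G')$, which is precisely composition in $\msf{Def}(\scr{A},\scr{B})^{\op}$. A natural transformation $G\Rightarrow G'$ induces one $\epsilon(G')\Rightarrow\epsilon(G)$ by the same functoriality, and the analogous statements for $\delta$ follow from uniqueness in \cite[Theorem 10.1]{Krauselfp} and of right adjoints.

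\emph{Stage three, the crux.} Starting from a definable $G$, the functor $\tilde\Phi$ attached to $\Phi=\epsilon(G)$ is, by the defining square of $\epsilon$, just $G_!\vert$; its direct-limit extension $\tilde\Phi^*$ is then $G_!$ itself, since $\mbb{D}(\scr{B})$ is locally coherent and hence a direct-limit-preserving functor out of it is determined by its restriction to $\msf{fp}(\mbb{D}(\scr{B}))$ (again by the uniqueness in \cite[Theorem 10.1]{Krauselfp}). Consequently $\tilde\Phi_*\simeq G^*$, and the defining square of $\delta$ together with $G^*d_\scr{A}=d_\scr{B}G$ and the faithfulness of the equivalence $d_\scr{B}$ yield $\delta(\epsilon(G))\simeq G$. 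The reverse composite $\epsilon\circ\delta\simeq\mathrm{id}$ is handled symmetrically, using that $\mbb{D}(\scr{A})$ is generated under filtered colimits by its finitely presented objects to identify $\delta(\Phi)^*$, its left adjoint, and the restriction of the latter to $\msf{fp}$, and then transporting back along $\y$. I expect the main obstacle to be exactly this bookkeeping: at each step one must invoke the appropriate uniqueness clause from \cref{appendix.D} or \cite[Theorem 10.1, Universal Property 10.4]{Krauselfp} to pin down the candidate functor, since there is essentially no room to argue by direct computation with objects.
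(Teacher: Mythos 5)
The paper gives no proof of this statement beyond the citation to \cite[Theorem 11.2]{Krauselfp}, which you correctly identify as the intended justification, and your sketch is a faithful unwinding of the constructions in \cref{extodef} via the uniqueness clauses of Krause's universal properties. The only slight imprecision is in the reverse composite: identifying $\delta(\Phi)^*$ with $\tilde{\Phi}_*$ uses the uniqueness of left exact, direct-limit-preserving extensions from $\msf{fpInj}(\mbb{D}(\scr{A}))\simeq\scr{A}$ (i.e.\ \cite[Universal Property 10.4]{Krauselfp}) rather than generation by finitely presented objects, but this does not affect the correctness of the overall argument.
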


\begin{chunk}
We may now turn towards proving the main result of this section. The proof of the implication \ref{coherentitem1} $\Rightarrow$ \ref{coherentitem2} of \cref{uniprop} is somewhat lengthy, so we split it up into several parts, some of which may be of independent interest.

There are, essentially, two stages to the proof. Firstly, given a coherent functor $H\colon \T \to \scr{A}$, we construct a definable functor $\hat{H}\colon \Flat{\T^\c} \to \scr{A}$; this proves to be relatively straightforward. The second stage is in showing that $\hat{H}$ interacts in the desired way with the restricted Yoneda embeddings, which is more involved.
 
The first step in the construction of the definable functor $\hat{H}$ is in passing from the triangulated world, to the realm of finitely accessible categories, where we have a vast array of non-triangulated categorical machinery at our disposal. Our first task is to formalise this passage. In order to do this, we firstly set some notation. 
\end{chunk}

\begin{chunk}\label{upanddownfinitelyaccessible}
Given a finitely presented functor $f\colon \msf{fp}(\scr{A}) \to \ab$, we denote by $f\up_\scr{A}$ the unique direct limit preserving extension to a functor $\scr{A} \to \ab$. We note that equivalently, if $f$ has presentation
\[
\Hom_{\scr{A}}(B,-)\vert_{\msf{fp}(\scr{A})}\to \Hom_{\scr{A}}(A,-)\vert_{\msf{fp}(\scr{A})}\to f\to 0,
\]
then $f\up_\scr{A}$ has presentation
\[
\Hom_{\scr{A}}(B,-)\to\Hom_{\scr{A}}(A,-)\to f\up_\scr{A} \to 0
\]
in $(\scr{A}, \ab)$. As such, $f\up_\scr{A}$ preserves products. Moreover, as $f\up_\scr{A}$ preserves direct limits, it preserves pure exact sequences since these are direct limits of split exact sequences.
\end{chunk}
\begin{lem}\label{definabletoexact}
Let $H\colon\T\to\scr{A}$ be a coherent functor. The assignment \[H\mapsto \left((f\colon \msf{fp}(\scr{A}) \to \ab) \mapsto (f\up_\scr{A} \circ H\colon \T \to \ab)\right)\] induces a functor $\msf{Coh}(\T,\scr{A})\to\msf{Ex}(\mod{\msf{fp}(\scr{A})^\op},\msf{Coh}(\T,\ab))$. 
\end{lem}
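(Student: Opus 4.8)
The plan is to unpack what needs checking: given a coherent functor $H\colon\T\to\scr{A}$, the assignment $f\mapsto f\up_\scr{A}\circ H$ should define a functor $\msf{mop}(\msf{fp}(\scr{A})) = \mod{\msf{fp}(\scr{A})^\op}^\op \to \msf{Coh}(\T,\ab)$ that is exact, and then that $H\mapsto(f\mapsto f\up_\scr{A}\circ H)$ is itself functorial in $H$. Recall that $\msf{mop}(\msf{fp}(\scr{A}))$ is by definition the opposite of $\mod{\msf{fp}(\scr{A})^\op} = \msf{fp}(\msf{fp}(\scr{A}),\ab)$, so a ``finitely presented functor $f\colon\msf{fp}(\scr{A})\to\ab$'' as in \cref{upanddownfinitelyaccessible} is precisely an object of $\msf{mop}(\msf{fp}(\scr{A}))$, and a morphism $f\to g$ in $\msf{mop}(\msf{fp}(\scr{A}))$ is a natural transformation $g\Rightarrow f$. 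The construction should therefore be: send $f$ to $f\up_\scr{A}\circ H$ and a morphism $g\Rightarrow f$ to the induced natural transformation $g\up_\scr{A}\circ H\Rightarrow f\up_\scr{A}\circ H$ (using that $(-)\up_\scr{A}$, being the direct-limit-preserving extension, is functorial, and then whiskering by $H$).

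First I would check well-definedness of the target: for $f\in\msf{fp}(\scr{A})$ with presentation $\Hom_\scr{A}(B,-)\vert_{\msf{fp}(\scr{A})}\to\Hom_\scr{A}(A,-)\vert_{\msf{fp}(\scr{A})}\to f\to 0$ with $A,B\in\msf{fp}(\scr{A})$, the functor $f\up_\scr{A}\circ H\colon\T\to\ab$ fits into an exact sequence $\Hom_\scr{A}(B,H-)\to\Hom_\scr{A}(A,H-)\to f\up_\scr{A}\circ H\to 0$. Now $\Hom_\scr{A}(A,H(-))\colon\T\to\ab$ is coherent: it preserves coproducts and products since $H$ does and $\Hom_\scr{A}(A,-)$ preserves products and (as $A$ is finitely presented) filtered colimits, and it sends pure triangles to exact sequences because $H$ sends them to pure exact sequences in $\scr{A}$, which $\Hom_\scr{A}(A,-)$ then sends to exact sequences. (This is exactly the argument used inside the proof of \cref{enhanceddirectlimits}.) Hence $\Hom_\scr{A}(A,H-)$ and $\Hom_\scr{A}(B,H-)$ lie in $\msf{Coh}(\T,\ab)$, and since $\msf{Coh}(\T,\ab)$ is an abelian category closed under cokernels inside $(\T,\ab)$, the cokernel $f\up_\scr{A}\circ H$ is again coherent. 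So the assignment lands in $\msf{Coh}(\T,\ab)$.

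Next I would verify exactness of $\Phi_H\colon f\mapsto f\up_\scr{A}\circ H$, i.e. that $\Phi_H$ takes exact sequences in $\msf{mop}(\msf{fp}(\scr{A}))$ to exact sequences in $\msf{Coh}(\T,\ab)$; equivalently, precomposing, that $(-)\up_\scr{A}\colon\mod{\msf{fp}(\scr{A})^\op}\to(\scr{A},\ab)$ is exact onto its image and that whiskering by $H$ preserves exactness. For the first, $(-)\up_\scr{A}$ is the restriction to finitely presented objects of the equivalence $\msf{mop}(\msf{fp}(\scr{A}))\xrightarrow{\sim}\msf{fp}(\mbb{D}(\scr{A}))$ (see \cref{extodef}), so it is exact; concretely, an exact sequence of finitely presented functors on $\msf{fp}(\scr{A})$ remains exact after the direct-limit extension because direct limits in $(\scr{A},\ab)$ are exact (it is a Grothendieck category). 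For the whiskering step, I would argue pointwise: for fixed $X\in\T$, evaluation at $HX$ is exact on $(\scr{A},\ab)$ restricted to the relevant functors — more carefully, exactness in $\msf{Coh}(\T,\ab)$ is detected objectwise in $(\T,\ab)$, and $(g\up_\scr{A}\circ H)(X) = g\up_\scr{A}(HX)$, so exactness of a complex of the $g\up_\scr{A}$'s at every object of $\scr{A}$ forces exactness of the whiskered complex at every object of $\T$. The main obstacle, and the only genuinely non-formal point, is the verification that $\Hom_\scr{A}(A,H(-))$ is coherent for $A\in\msf{fp}(\scr{A})$ — everything else is bookkeeping with presentations and the functoriality of $(-)\up_\scr{A}$; I would emphasize that this coherence is where the hypothesis that $H$ preserves \emph{both} products and coproducts and sends pure triangles to \emph{pure} exact sequences is used. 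Finally, functoriality of $H\mapsto\Phi_H$ in $H$ is immediate: a natural transformation $H\Rightarrow H'$ whiskers with each $f\up_\scr{A}$ to give $\Phi_H\Rightarrow\Phi_{H'}$, compatibly with composition.
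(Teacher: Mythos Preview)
Your proposal is correct and takes essentially the same approach as the paper. The only minor difference is in verifying that $f\up_\scr{A}\circ H$ is coherent: the paper checks the three defining preservation properties directly (using the facts recorded in \cref{upanddownfinitelyaccessible} that $f\up_\scr{A}$ preserves products, direct limits, and pure exact sequences), whereas you go via the presentation of $f$ and closure of $\msf{Coh}(\T,\ab)$ under cokernels; the pointwise exactness argument is the same in both.
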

\begin{proof}
By the assumed preservation properties of $H$, we see that the composition $f\up_\scr{A}\circ H\colon\T\to\ab$ preserves coproducts, products and sends pure triangles to pure exact sequences. As such, $f\up_\scr{A} \circ H$ is a coherent functor, and consequently $H\mapsto (f\mapsto f\up_\scr{A}\circ H)$ yields a functor $\msf{Coh}(\T,\scr{A})\to(\mod{\msf{fp}(\scr{A})^\op},\msf{Coh}(\T,\ab))$. To conclude the proof, we must show that this functor takes values in exact, not just additive functors. To do this, suppose that $0\to f\to f'\to f''\to 0$ is an exact sequence in $\mod{\msf{fp}(\scr{A})^\op}$. As exactness is determined pointwise, it follows that $0\to f\up_\scr{A}H(t)\to f'\up_\scr{A}H(t)\to f''\up_\scr{A}H(t)\to 0$ is exact for all $t\in \T$, as required.
\end{proof}

The previous lemma moves the task of constructing $\hat{H}$ completely into the realm of finitely accessible categories. In order to do this, we recall two rather trivial equivalences of categories.

\begin{chunk}\label{upanddown}
The first is the equivalence between $\msf{fp}(\T^{\c},\ab)$ and $\msf{Coh}(\T,\ab)$, as mentioned in \cref{coherent}, for which we now introduce some notation. In a similar way to \cref{upanddownfinitelyaccessible}, if $f\in\msf{fp}(\T^{\c},\ab)$ has presentation
\[
\Hom_{\T}(B,-)\vert_{\T^{\c}}\to \Hom_{\T}(A,-)\vert_{\T^{\c}}\to f\to 0,
\]
we shall let $f\up_\T$ denote the corresponding coherent functor which has presentation
\[
\Hom_{\T}(B,-)\to\Hom_{\T}(A,-)\to f\up_\T \to 0
\]
in $(\T^{\c},\ab)$. On the other hand, if $G\in\msf{Coh}(\T,\ab)$, then we let $G\down_\T\in\msf{fp}(\T^{\c},\ab)$ denote the restriction of $G$ to $\T^{\c}$. Clearly $\up_\T$ and $\down_\T$ are mutually inverse functors.
\end{chunk}

\begin{chunk}\label{littleflats}
The second equivalence we need is induced by the equivalence $\msf{y}\colon\T^{\c}\xrightarrow{\sim}\flat{\T^{\c}}$ which we recalled in \cref{flatequivalences}. This gives rise to an equivalence of categories
\[ 
\begin{tikzcd}
(\T^{\c},\ab) \arrow[r, shift left = 1.1ex, "\beta"] \arrow[r, phantom, "\simeq"] \arrow[r, leftarrow, shift right = 1.1ex, swap, "\alpha"] & (\flat{\T^{\c}},\ab)
\end{tikzcd}
\]
where $\alpha\colon g\mapsto g\circ \msf{y}$, 
for every $g\in(\flat{\T^{\c}},\ab)$, and $\beta\colon f\mapsto (\msf{y}A\mapsto fA)$ for $f\in(\T^{\c},\ab)$, and all $A\in\T^{\c}$. We note that $\alpha$ and $\beta$ restrict to give an equivalence $\msf{fp}(\T^\c, \ab) \simeq \msf{fp}(\flat{\T^\c},\ab)$.
\end{chunk}

The following result is the final step of the first stage, enabling us to obtain a definable functor $\Flat{\T^{\c}}\to\scr{A}$.
\begin{prop}\label{exactequivdefinable}
Let $\T$ be a compactly generated triangulated category, and $\scr{A}$ be a finitely accessible category with products. Then there is an equivalence of categories
\[
\msf{Def}(\Flat{\T^{\c}},\scr{A}) \simeq \msf{Ex}(\mod{\msf{fp}(\scr{A})^\op}, \msf{Coh}(\T,\ab)).
\]
\end{prop}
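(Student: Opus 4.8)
The plan is to derive this directly from \cref{thm:deftoex} by unwinding the definition of $\msf{mop}$ and then identifying the target of the exact functors with $\msf{Coh}(\T,\ab)$. First I would apply \cref{thm:deftoex} with the source finitely accessible category taken to be $\Flat{\T^{\c}}$, which is finitely accessible with products by \cref{flatequivalences}, and the target taken to be $\scr{A}$; this yields an equivalence
\[
\msf{Def}(\Flat{\T^{\c}},\scr{A})^{\op} \simeq \msf{Ex}\bigl(\msf{mop}(\msf{fp}(\scr{A})),\, \msf{mop}(\msf{fp}(\Flat{\T^{\c}}))\bigr).
\]
Here one must be careful with the variance built into \cref{thm:deftoex}: the source of the definable functors controls the target of the exact functors, and vice versa.

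Next I would unwind the two occurrences of $\msf{mop}$. By definition $\msf{mop}(\C) = \mod{\C^{\op}}^{\op} = \msf{fp}(\C,\ab)^{\op}$, and by \cref{flatequivalences} we have $\msf{fp}(\Flat{\T^{\c}}) = \flat{\T^{\c}}$, so the right-hand side above becomes $\msf{Ex}(\mod{\msf{fp}(\scr{A})^{\op}}^{\op},\, \msf{fp}(\flat{\T^{\c}},\ab)^{\op})$. Using that exactness is self-dual --- so that $F \mapsto F^{\op}$ gives an equivalence $\msf{Ex}(\mc{C}^{\op},\mc{D}^{\op}) \simeq \msf{Ex}(\mc{C},\mc{D})^{\op}$ --- this is equivalent to $\msf{Ex}(\mod{\msf{fp}(\scr{A})^{\op}},\, \msf{fp}(\flat{\T^{\c}},\ab))^{\op}$. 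Passing to opposite categories on both sides of the displayed equivalence then gives
\[
\msf{Def}(\Flat{\T^{\c}},\scr{A}) \simeq \msf{Ex}(\mod{\msf{fp}(\scr{A})^{\op}},\, \msf{fp}(\flat{\T^{\c}},\ab)).
\]

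Finally I would replace the target category $\msf{fp}(\flat{\T^{\c}},\ab)$ by $\msf{Coh}(\T,\ab)$: the restriction of the functors $\alpha,\beta$ recorded in \cref{littleflats} gives an equivalence $\msf{fp}(\flat{\T^{\c}},\ab) \simeq \msf{fp}(\T^{\c},\ab)$, which composed with the equivalence $\msf{fp}(\T^{\c},\ab) \simeq \msf{Coh}(\T,\ab)$ of \cref{coherent} yields $\msf{fp}(\flat{\T^{\c}},\ab) \simeq \msf{Coh}(\T,\ab)$; post-composing exact functors with this equivalence turns the previous display into the asserted equivalence. No step here is a serious obstacle --- the mathematical content is entirely in \cref{thm:deftoex}, and what remains is bookkeeping with opposite categories, the only delicate points being the variance in \cref{thm:deftoex} and the duality $\msf{Ex}(\mc{C}^{\op},\mc{D}^{\op}) \simeq \msf{Ex}(\mc{C},\mc{D})^{\op}$. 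If one wanted an explicit description of the equivalence rather than an abstract one, one could instead transport the functors $\epsilon(-),\delta(-)$ of \cref{extodef} along the equivalences $\up_\T,\down_\T$ of \cref{upanddown} and $\alpha,\beta$ of \cref{littleflats}, but that is not needed for the statement.
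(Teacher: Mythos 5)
Your argument is correct and is essentially the paper's own proof run in the opposite direction: the paper starts from $\msf{Ex}(\mod{\msf{fp}(\scr{A})^\op},\msf{Coh}(\T,\ab))$ and unwinds through $\down_\T$, $\beta$, the definition of $\msf{mop}$, the duality for exact functor categories, and \cref{thm:deftoex} to reach $\msf{Def}(\Flat{\T^{\c}},\scr{A})$, which is exactly your chain of identifications traversed in reverse. All the ingredients and the handling of variance match, so there is nothing to add.
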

\begin{proof}
There are equivalences of categories
\begin{align*}
\msf{Ex}(\mod{\msf{fp}(\scr{A})^\op},\msf{Coh}(\T,\ab)) &\simeq \msf{Ex}(\mod{\msf{fp}(\scr{A})^\op},\msf{fp}(\T^{\c},\ab)) & \t{induced by $\downarrow_\T$ as in \cref{upanddown},} \\
{}&\simeq \msf{Ex}(\mod{\msf{fp}(\scr{A})^\op},\msf{fp}(\flat{\T^{\c}},\ab))&\t{induced by $\beta$ as in \cref{littleflats}},\\
{}&= \msf{Ex}(\msf{mop}(\msf{fp}(\scr{A}))^\op,\msf{mop}(\flat{\T^{\c}})^\op)
&\t{by definition of $\msf{mop}$ (see \cref{appendix.D}),}\\
&\simeq \msf{Ex}(\msf{mop}(\msf{fp}(\scr{A})),\msf{mop}(\flat{\T^{\c}}))^\op &\t{by taking opposites,} \\
{} &\simeq \msf{Def}(\Flat{\T^{\c}},\scr{A}) &\t{by \cref{thm:deftoex}}.
\end{align*}
\end{proof}

\begin{chunk}\label{Fhatconstruction}
Combining \cref{definabletoexact} and \cref{exactequivdefinable}, we see that there is a functor
\[
\msf{Coh}(\T,\scr{A})\to\msf{Def}(\Flat{\T^{\c}},\scr{A}).
\]
The image of any coherent functor $H\colon\T\to\scr{A}$ under this functor provides a candidate definable functor $\hat{H}\colon\Flat{\T^{\c}}\to\scr{A}$. However, it remains to show that $\hat{H}$ is compatible with $H$ under the Yoneda embedding. In order to do this we explicitly unravel the equivalences of categories given in the proof of \cref{exactequivdefinable}.
\end{chunk} 

\begin{chunk}\label{step1}
First of all, we may combine the first three isomorphisms of \cref{exactequivdefinable} to obtain from the functor $-\up_\scr{A} \circ H\in \msf{Ex}(\mod{\msf{fp}(\scr{A})^\op}, \msf{Coh}(\T,\ab))$ of \cref{definabletoexact} a unique exact functor $\varphi\colon \msf{mop}(\msf{fp}(\scr{A}))^\op \to \msf{mop}(\msf{flat}(\T^{\c}))^\op$, which is defined by the following commutative diagram
\[
\begin{tikzcd}[column sep=1in, row sep=0.5in]
\mod{\msf{fp}(\scr{A})^\op} \arrow[r, "-\up_\scr{A}\circ H"] \arrow[d, equal] & \msf{Coh}(\T,\ab) \arrow[d, shift left = 1.5ex, "\beta_{\T}\circ\down_{\T}"] \arrow[d, phantom, "\simeq" description] \\
\msf{mop}(\msf{fp}(\scr{A}))^\op \arrow[r, "\varphi"'] & \msf{mop}(\msf{flat}(\T^{\c}))^\op \arrow[u,shift left = 1.5ex, "\up_{\T}\circ\alpha_{\T}"].
\end{tikzcd}
\]
Any $g \in \msf{mop}(\msf{fp}(\scr{A}))^{\op}$ has a presentation $\msf{h}A \to \msf{h}A' \to g \to 0$ where $A, A' \in \msf{fp}(\scr{A})$. Since $\varphi$ is exact, to understand $\varphi(g)$ it suffices to understand $\varphi(\msf{h}A)\colon \flat{\T^\c} \to \ab$ for $A \in \msf{fp}(\scr{A})$. But, as any object in $\flat{\T^\c}$ is of the form $\msf{y}X$ for some $X \in \T^\c$, it is sufficient to consider $\varphi(\msf{h}A)(\msf{y}X)$, as we now do.
\end{chunk}

\begin{lem}\label{evalonreps}
For any $A \in \msf{fp}(\scr{A})$ and $X \in \T^\c$ we have
	\[
	\varphi(\msf{h}A)(\msf{y}X)=\Hom_\scr{A}(A, HX). 
	\]
\end{lem}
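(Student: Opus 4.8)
The plan is to chase the definition of $\varphi$ through the chain of equivalences in \cref{exactequivdefinable}, evaluated on the particular object $\msf{h}A \in \msf{mop}(\msf{fp}(\scr{A}))^{\op} = \msf{fp}(\scr{A})$ under the identification $\msf{mop}(\msf{fp}(\scr{A})) = \msf{fp}(\scr{A},\ab)^{\op}$. The key observation is that under the equality $\mod{\msf{fp}(\scr{A})^{\op}} = \msf{mop}(\msf{fp}(\scr{A}))^{\op}$ used in the left column of the defining diagram in \cref{step1}, the object $\msf{h}A \in \msf{fp}(\scr{A})$ corresponds to the representable functor $\Hom_{\scr{A}}(A,-)\vert_{\msf{fp}(\scr{A})} \in \mod{\msf{fp}(\scr{A})^{\op}}$. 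Tracing the top and right arrows of that diagram, $\varphi(\msf{h}A)$ is computed as $(\up_{\T} \circ \alpha_{\T})$ applied to $(-\up_{\scr{A}} \circ H)(\Hom_{\scr{A}}(A,-)\vert_{\msf{fp}(\scr{A})})$; that is, $\varphi(\msf{h}A) = \up_{\T}\big(\alpha_{\T}(\Hom_{\scr{A}}(A,-)\up_{\scr{A}} \circ H)\big)$ as an object of $\msf{mop}(\msf{flat}(\T^{\c}))^{\op} = \msf{fp}(\flat{\T^{\c}},\ab)$.

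First I would identify the coherent functor $\Hom_{\scr{A}}(A,-)\up_{\scr{A}} \circ H \colon \T \to \ab$. By \cref{upanddownfinitelyaccessible}, $\Hom_{\scr{A}}(A,-)\up_{\scr{A}}$ is just the (unrestricted) representable functor $\Hom_{\scr{A}}(A,-)\colon \scr{A} \to \ab$, since the representable is already direct-limit preserving on $\scr{A}$ when $A$ is finitely presented. Hence $\Hom_{\scr{A}}(A,-)\up_{\scr{A}} \circ H = \Hom_{\scr{A}}(A, H(-))$, a coherent functor $\T \to \ab$. Next, applying $\down_{\T}$ restricts this to $\T^{\c}$, yielding the finitely presented left module $X \mapsto \Hom_{\scr{A}}(A, HX)$ for $X \in \T^{\c}$. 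Then applying $\beta_{\T}$ (the equivalence $(\T^{\c},\ab) \simeq (\flat{\T^{\c}},\ab)$ from \cref{littleflats}, $f \mapsto (\y A \mapsto fA)$) transports this to the functor $\flat{\T^{\c}} \to \ab$ sending $\y X \mapsto \Hom_{\scr{A}}(A, HX)$. This composite $\beta_{\T} \circ \down_{\T}$ is precisely the equivalence appearing as the right-hand vertical arrow in the diagram of \cref{step1}, and $\up_{\T} \circ \alpha_{\T}$ is its inverse; so unwinding, $\varphi(\msf{h}A)$ is exactly the object of $\msf{fp}(\flat{\T^{\c}},\ab)$ that $\beta_{\T}\circ\down_{\T}$ sends to $\Hom_{\scr{A}}(A,H(-))$, which by the computation just made is $(\y X \mapsto \Hom_{\scr{A}}(A,HX))$. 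Evaluating at $\y X$ gives the claimed formula $\varphi(\msf{h}A)(\y X) = \Hom_{\scr{A}}(A,HX)$.

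The main obstacle I anticipate is purely bookkeeping: correctly matching the object $\msf{h}A$ across the several dualizations and opposite-category identifications, in particular keeping straight that $\msf{mop}(\msf{fp}(\scr{A})) = \msf{fp}(\scr{A},\ab)^{\op}$ so that an object written as $\msf{h}A$ (a representable in $\msf{mop}$, hence an object of $\msf{fp}(\scr{A})$ itself) is the \emph{same} data as the representable module $\Hom_{\scr{A}}(A,-)\vert_{\msf{fp}(\scr{A})}$ in $\mod{\msf{fp}(\scr{A})^{\op}}$, under the equality used in the left column of \cref{step1}. Once that identification is made explicit, the rest is a direct substitution of definitions: $\up_{\scr{A}}$ unfolds representables to representables, $\down_{\T}$ is restriction, and $\beta_{\T}$ relabels the domain via $\y$. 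I would present the argument as a short sequence of equalities, citing \cref{upanddownfinitelyaccessible} for the behaviour of $\up_{\scr{A}}$, \cref{upanddown} for $\down_{\T}$, and \cref{littleflats} for $\beta_{\T}$, and noting that $\up_{\T}\circ\alpha_{\T}$ is inverse to $\beta_{\T}\circ\down_{\T}$ so that applying it to $\Hom_{\scr{A}}(A,H(-))$ recovers the stated functor on $\flat{\T^{\c}}$.
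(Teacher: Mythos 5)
Your proof is correct and follows essentially the same route as the paper's: unwind the commutative diagram defining $\varphi$ in \cref{step1}, observe that $(\msf{h}A)\up_\scr{A} = \Hom_\scr{A}(A,-)$ so that the top composite evaluated at $\msf{h}A$ is $\Hom_\scr{A}(A,H(-))$, and then apply $\down_\T$ and $\beta_\T$ to land on $\y X \mapsto \Hom_\scr{A}(A,HX)$. The only slip is cosmetic: in two places you attribute the passage from $\msf{Coh}(\T,\ab)$ to $\msf{mop}(\flat{\T^\c})^\op$ to $\up_\T\circ\alpha_\T$, whereas that composite is the inverse equivalence $\beta_\T\circ\down_\T$ --- which is the one you actually (and correctly) use when carrying out the computation.
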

\begin{proof}
By definition, $\varphi(\msf{h}A) = \beta_\T[((\msf{h}A)\up_\scr{A} \circ H)\down_\T] = \beta_\T[\Hom_\scr{A}(A, H(-))\vert_{\T^\c}]$. The claim then follows as $\beta_\T(\Hom_\scr{A}(A, H(-)))(\msf{y}X) = \Hom_\scr{A}(A, HX)$ by definition of $\beta_\T$.
\end{proof}

\begin{chunk}\label{step2}
We next turn to showing how, from the functor $\varphi^\op\colon \msf{mop}(\msf{fp}(\scr{A})) \to \msf{mop}(\flat{\T^\c})$ of \cref{step1}, we obtain $\hat{H}\colon \Flat{\T^\c} \to \scr{A}$, as in the last step of the proof of \cref{exactequivdefinable}. To simplify notation (and for ease in comparing with \cref{extodef}) we write $\Phi$ for $\varphi^\op$. The functor $\hat{H}$ is obtained by applying the construction of \cref{extodef}: namely, $\hat{H} = \delta(\Phi)$ in that notation. Diagrammatically, $\hat{H}$ arises as follows, where we write $\msf{d}_\T$ for $\msf{d}_{\Flat{\T^\c}}$.
\[
\begin{tikzcd}[column sep=0.5in, row sep=0.7in]
\msf{mop}(\msf{fp}(\scr{A})) \arrow[r, "\msf{y}", "\simeq"'] \arrow[d, "\Phi"'] & \msf{fp}(\mbb{D}(\scr{A})) \arrow[r, hookrightarrow] \ar[d, "\widetilde{\Phi}"] &
\mbb{D}(\scr{A}) \arrow[d, xshift=-1.7mm, "\tilde{\Phi}^*"'] \arrow[d, phantom, "\dashv"] \arrow[r, hookleftarrow] & 
\msf{fpInj}(\mbb{D}(\scr{A})) & 
\scr{A} \arrow[l, "\msf{d}_\scr{A}"', "\simeq"] \\
\msf{mop}(\msf{flat}(\T^\c)) \arrow[r, "\msf{y}"', "\simeq"] & \msf{fp}(\mbb{D}(\msf{Flat}(\T^\c))) \arrow[r, hookrightarrow] &
\mbb{D}(\msf{Flat}(\T^\c)) \arrow[u, xshift=1.7mm, "\tilde{\Phi}_*"']  \arrow[r, hookleftarrow] & 
\msf{fpInj}(\mbb{D}(\msf{Flat}(\T^\c))) \arrow[u, "\tilde{\Phi}_*\mid"']& 
\msf{Flat}(\T^\c). \arrow[l, "\msf{d}_\T"', "\simeq"] \arrow[u, "\hat{H}"']
\end{tikzcd}
\]

By general abstract nonsense, one may identify $\widetilde{\Phi}_*$ in a more explicit form, as the next lemma shows.
\end{chunk}

\begin{lem}\label{lowerstar}
Let $\tilde{\Phi}_{*}\colon\mbb{D}(\msf{Flat}(\T^{\c}))\to \mbb{D}(\scr{A})$ be the functor constructed above in \cref{step2}. Then $\tilde{\Phi}_{*} = - \circ \Phi$.
\end{lem}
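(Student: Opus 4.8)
The claim is that the right adjoint $\tilde\Phi_*\colon \mbb{D}(\msf{Flat}(\T^\c)) \to \mbb{D}(\scr{A})$ constructed in \cref{step2} is given explicitly by precomposition with $\Phi$. Recall that by definition $\mbb{D}(\scr{A}) = \Flat{\msf{mop}(\msf{fp}(\scr{A}))}$, so its objects are certain additive functors $\msf{mop}(\msf{fp}(\scr{A})) \to \ab$ (namely the flat ones, equivalently the cohomological ones), and similarly for $\mbb{D}(\msf{Flat}(\T^\c)) = \Flat{\msf{mop}(\msf{flat}(\T^\c))}$. So the assertion $\tilde\Phi_* = -\circ\Phi$ makes sense: given $M\colon \msf{mop}(\msf{flat}(\T^\c)) \to \ab$ flat, the functor $M \circ \Phi\colon \msf{mop}(\msf{fp}(\scr{A})) \to \ab$ should again be flat, and this assignment should be right adjoint to $\tilde\Phi^*$.

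\textbf{Key steps.} First I would recall precisely how $\tilde\Phi^*$ is built from $\Phi$ via \cite[Theorem 10.1]{Krauselfp}: one starts with $\Phi\colon \msf{mop}(\msf{fp}(\scr{A})) \to \msf{mop}(\msf{flat}(\T^\c))$, transports it across the Yoneda equivalences to $\widetilde\Phi\colon \msf{fp}(\mbb{D}(\scr{A})) \to \msf{fp}(\mbb{D}(\msf{Flat}(\T^\c)))$ on finitely presented objects, and then $\tilde\Phi^*$ is the unique direct-limit-preserving (and exact) extension to all of $\mbb{D}(\scr{A}) \to \mbb{D}(\msf{Flat}(\T^\c))$. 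Since $\mbb{D}(\scr{A})$ is a functor category $\Flat{\msf{mop}(\msf{fp}(\scr{A}))}$ whose finitely presented objects are the representable/Yoneda functors $\msf{y}(g)$ for $g \in \msf{mop}(\msf{fp}(\scr{A}))$, and $\tilde\Phi^*(\msf{y}(g)) = \msf{y}(\Phi(g))$ by construction, I would next observe that $\tilde\Phi^*$ is precisely the left Kan extension along $\Phi$ (restricted to flat functors): this is the standard ``tensor with a bimodule'' description, $\tilde\Phi^*(-) = - \otimes_{\msf{mop}(\msf{fp}(\scr{A}))} \Phi$ in the appropriate sense, matching \cite[Theorem 10.1]{Krauselfp}. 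The right adjoint to such a left Kan extension / tensor functor is always restriction along $\Phi$, i.e. $M \mapsto M \circ \Phi$; one checks the hom-isomorphism $\Hom(\tilde\Phi^* N, M) \cong \Hom(N, M\circ\Phi)$ on representables $N = \msf{y}(g)$ using Yoneda (both sides become $M(\Phi(g))$, naturally), and then extends to all $N \in \mbb{D}(\scr{A})$ since every object is a filtered colimit of representables and $\tilde\Phi^*$ preserves these colimits. Finally I would note that $M \circ \Phi$ does land in $\mbb{D}(\scr{A}) = \Flat{\msf{mop}(\msf{fp}(\scr{A}))}$: since $M$ is cohomological (flat) and $\Phi$ is exact, $M\circ\Phi$ sends exact sequences in $\msf{mop}(\msf{fp}(\scr{A}))$ — in particular the images of triangles — to exact sequences, hence is flat; this uses \cref{flatequivalences}. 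By uniqueness of adjoints, $\tilde\Phi_* = -\circ\Phi$.

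\textbf{Main obstacle.} The genuine content is not the abstract adjunction but making sure the identifications line up: one must be careful that the ``$\tilde\Phi^*$'' of \cite[Theorem 10.1]{Krauselfp} really is the left Kan extension along $\Phi$ (as opposed to along $\Phi^\op$, or along $\widetilde\Phi$ with some variance twist), and that the category $\mbb{D}(\scr{A})$ is being viewed with the correct handedness so that ``precomposition with $\Phi$'' is well-typed. The cleanest route is probably to avoid reproving the Kan-extension description from scratch and instead quote the relevant universal property from \cite{Krauselfp} directly, then verify $-\circ\Phi$ satisfies it; the only real check is exactness-preservation of $\Phi$ forcing $M\circ\Phi$ to remain flat, plus the Yoneda computation on finitely presented objects, after which uniqueness of right adjoints closes the argument.
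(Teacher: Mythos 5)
Your proposal is correct and follows essentially the same route as the paper: identify $\tilde\Phi^*$ with the colimit-preserving (left Kan) extension of $\Phi$ along Yoneda, observe that this together with restriction $-\circ\Phi$ forms an adjunction which descends to the flat subcategories (exactness of $\Phi$ being what keeps $M\circ\Phi$ flat, as in \cite[Lemma 6.5]{Krauselfp}), and conclude by uniqueness of the direct-limit-preserving extension and uniqueness of adjoints. The paper simply quotes the module-category adjunction from \cite[\S 4]{Krauselfp} rather than reverifying the hom-isomorphism on representables, but the content is the same.
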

\begin{proof}
Consider the functor $\Phi\colon\msf{mop}(\msf{fp}(\scr{A}))\to \msf{mop}(\msf{flat}(\T^{\c}))$. By~\cite[\S 4]{Krauselfp} this gives rise to an adjunction 
\[
\begin{tikzcd}
\Mod{\msf{mop}(\msf{flat}(\T^{\c}))} \arrow[r, "R"', yshift=-1mm]& \Mod{\msf{mop}(\msf{fp}(\scr{A}))}. \arrow[l, "L"', yshift=1mm]
\end{tikzcd}
\]
where $R = -\circ \Phi$ and $L$ is the unique colimit-preserving functor which sends $\Hom(-,X)$ to $\Hom(-,\Phi(X))$. Recall that $\mbb{D}(\scr{A}) = \msf{Flat}(\msf{mop}(\msf{fp}(\scr{A})))$ by definition (see \cref{appendix.D}). We now show that both $L$ and $R$ preserve flats and hence restrict to an adjunction 
\[
\begin{tikzcd}
\mbb{D}(\msf{Flat}(\T^{\c})) \arrow[r, "R"', yshift=-1mm]& \mbb{D}(\msf{Flat}(\U^{\c})). \arrow[l, "L"', yshift=1mm]
\end{tikzcd}
\]
Since $L$ preserves colimits and sends $\Hom(-,X)$ to $\Hom(-, \Phi(X))$, it preserves flats, as any flat is a colimit of representables. The functor $R$ preserves flats by~\cite[Lemma 6.5]{Krauselfp} as $\Phi$ is exact. 

By construction, $\widetilde{\Phi}^*$ is the unique direct limit preserving functor $\mbb{D}(\scr{A}) \to \mbb{D}(\Flat{\T^\c})$ such that $\msf{y}\circ \Phi = \widetilde{\Phi}^* \circ \msf{y}$, see \cref{extodef}. Therefore, by uniqueness of $\widetilde{\Phi}^*$, we see that $\widetilde{\Phi}^* = L$, and hence by uniqueness of adjoints, $\widetilde{\Phi}_* = R$ as required.
\end{proof}

We record two final auxiliary lemmas before we can prove the compatibility with Yoneda embeddings.
\begin{lem}\label{calculated}
Let $\scr{A}$ be a finitely accessible category with products. Then for any $M \in \scr{A}$ and $A \in \msf{fp}(\scr{A})$ we have $(\msf{d}_{\scr{A}}M)(\msf{h}A) = \Hom_\scr{A}(A,M)$.
\end{lem}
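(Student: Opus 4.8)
The plan is to reduce to the case where $M$ is finitely presented, handle that case by directly unwinding the definition of $\msf{d}_{\scr{A}}$ recalled in \cref{appendix.D}, and then bootstrap to an arbitrary object $M \in \scr{A}$ using that both sides commute with filtered colimits in the variable $M$.

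First I would treat $M \in \msf{fp}(\scr{A})$. By the defining commutative square for $\msf{d}_{\scr{A}}$ in \cref{appendix.D}, one has $\msf{d}_{\scr{A}}M = \msf{y}(\msf{h}M)$, where $\msf{y}\colon \msf{mop}(\msf{fp}(\scr{A})) \to \mbb{D}(\scr{A})$ is the restriction to flat functors of the Yoneda embedding $\msf{mop}(\msf{fp}(\scr{A})) \to \Mod{\msf{mop}(\msf{fp}(\scr{A}))}$. Hence, for $A \in \msf{fp}(\scr{A})$, evaluating the representable functor $\msf{y}(\msf{h}M)$ at the object $\msf{h}A$ yields $\Hom_{\msf{mop}(\msf{fp}(\scr{A}))}(\msf{h}A, \msf{h}M)$. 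It then remains to identify this group with $\Hom_{\scr{A}}(A, M)$, which follows from the full faithfulness of $\msf{h}$: unwinding $\msf{mop}(\msf{fp}(\scr{A})) = \msf{fp}(\msf{fp}(\scr{A}),\ab)^{\op}$ turns this Hom-group into $\Hom_{\msf{fp}(\msf{fp}(\scr{A}),\ab)}\big(\Hom_{\scr{A}}(M,-)\vert_{\msf{fp}(\scr{A})},\, \Hom_{\scr{A}}(A,-)\vert_{\msf{fp}(\scr{A})}\big)$, which the Yoneda lemma identifies with $\Hom_{\scr{A}}(A, M)$, naturally in both $A$ and $M$.

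For general $M$, I would write $M = \rlim_{i} M_{i}$ as a filtered colimit of finitely presented objects and observe that each construction commutes with this colimit: $\msf{d}_{\scr{A}}$ preserves direct limits since it is definable (\cref{appendix.D}), so $\msf{d}_{\scr{A}}M = \rlim_{i}\msf{d}_{\scr{A}}M_{i}$; evaluation at $\msf{h}A$ commutes with this colimit because $\mbb{D}(\scr{A}) = \Flat{\msf{mop}(\msf{fp}(\scr{A}))}$ is closed under filtered colimits in $\Mod{\msf{mop}(\msf{fp}(\scr{A}))}$, where they are computed pointwise; and $\Hom_{\scr{A}}(A,-)$ preserves filtered colimits because $A$ is finitely presented. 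Combining these with the finitely presented case gives $(\msf{d}_{\scr{A}}M)(\msf{h}A) = \rlim_{i}\Hom_{\scr{A}}(A, M_{i}) = \Hom_{\scr{A}}(A, M)$.

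I expect the point requiring the most care, rather than a genuine obstacle, to be the bookkeeping of the various opposite categories concealed in the notation $\msf{mop}(-)$ and in the covariant-versus-contravariant Yoneda embeddings, so as to be certain the output is $\Hom_{\scr{A}}(A, M)$ with the correct variances and not one of its formal duals; once this is pinned down the argument is essentially a restatement of the Yoneda lemma together with the compatibility of $\msf{d}_{\scr{A}}$ with filtered colimits.
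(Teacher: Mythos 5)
Your proposal is correct and follows essentially the same route as the paper: identify $\msf{d}_{\scr{A}}M_i = \msf{y}\msf{h}M_i$ for finitely presented $M_i$, reduce the evaluation at $\msf{h}A$ to a Hom-group in $\msf{mop}(\msf{fp}(\scr{A}))$ which Yoneda identifies with $\Hom_{\scr{A}}(A,M_i)$, and then pass to filtered colimits using that $\msf{d}_{\scr{A}}$ preserves direct limits and $A$ is finitely presented. The paper simply runs the finitely presented case and the colimit argument in a single chain of equalities rather than separating them, and your attention to the variance bookkeeping in $\msf{mop}(-)$ is exactly the right point to be careful about.
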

\begin{proof}
We may write $M = \rlim M_i$ where each $M_i \in \msf{fp}(\scr{A})$, and therefore as $\msf{d}_\scr{A}$ preserves direct limits, we have $\msf{d}_\scr{A} M = \rlim \msf{d}_\scr{A} M_i = \rlim \msf{y}\msf{h}M_i$ where the last equality follows from \cref{appendix.D}. Therefore 
\begin{align*}
(\msf{d}_\scr{A} M)(\msf{h}A) &= \rlim[(\msf{y}\msf{h}M_i)(\msf{h}A)]  \\ 
&= \rlim\Hom_{\msf{mop}(\msf{fp}(\scr{A}))}(\msf{h}A, \msf{h}M_i)\\ 
&= \rlim\Hom_{\msf{fp}(\scr{A})}(A, M_i)\\ 
&= \Hom_\scr{A}(A, M) 
\end{align*}
as required.
\end{proof}

\begin{lem}\label{applyd}
Let $M,N \in \scr{A}$. Then $M = N$ if and only if $(\msf{d}_\scr{A} M)(\msf{h}A) = (\msf{d}_\scr{A} N)(\msf{h}A)$ for each $A \in \msf{fp}(\scr{A})$.
\end{lem}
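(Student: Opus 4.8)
The forward implication is immediate, so the plan is to prove the converse: from the pointwise agreement of $\msf{d}_\scr{A}M$ and $\msf{d}_\scr{A}N$ on the objects $\msf{h}A$, $A\in\msf{fp}(\scr{A})$, deduce $M=N$. The strategy is to recognise the assignment $A\mapsto(\msf{d}_\scr{A}M)(\msf{h}A)$ as the restricted Yoneda module of $M$ over $\msf{fp}(\scr{A})$, and then appeal to the fact that restricted Yoneda is fully faithful on a finitely accessible category.

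First I would record that $A\mapsto\msf{h}A$ is a (covariant) fully faithful functor $\msf{fp}(\scr{A})\to\msf{mop}(\msf{fp}(\scr{A}))$, since the Yoneda lemma gives $\Hom_{\msf{mop}(\msf{fp}(\scr{A}))}(\msf{h}A,\msf{h}B)\cong\Hom_{\msf{fp}(\scr{A})}(A,B)$. Consequently the composite $\msf{fp}(\scr{A})^\op\xrightarrow{\msf{h}^\op}\msf{mop}(\msf{fp}(\scr{A}))^\op\xrightarrow{\msf{d}_\scr{A}M}\ab$ is a well-defined module over $\msf{fp}(\scr{A})$, and by \cref{calculated}, read as an isomorphism natural in $A$, it is precisely $\Hom_\scr{A}(-,M)\vert_{\msf{fp}(\scr{A})}$. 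The hypothesis then says exactly that $\Hom_\scr{A}(-,M)\vert_{\msf{fp}(\scr{A})}=\Hom_\scr{A}(-,N)\vert_{\msf{fp}(\scr{A})}$ in $(\msf{fp}(\scr{A})^\op,\ab)$. The one delicate point here is ensuring that the identification of \cref{calculated} really is natural in $A$, so that the pointwise hypothesis upgrades to an equality of functors; this is precisely what the preliminary identification of the full subcategory of $\msf{mop}(\msf{fp}(\scr{A}))$ on the $\msf{h}A$ is for, and it drops out of the Yoneda lemma. I would regard this as bookkeeping rather than a genuine obstacle.

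To conclude, I would invoke the standard fact that, $\scr{A}$ being finitely accessible, the restricted Yoneda functor $\scr{A}\to(\msf{fp}(\scr{A})^\op,\ab)$, $M\mapsto\Hom_\scr{A}(-,M)\vert_{\msf{fp}(\scr{A})}$, is fully faithful (see \cite{CrawleyBoevey}), hence in particular conservative; this yields $M=N$. Alternatively, one can stay inside $\mbb{D}(\scr{A})$: by \cref{appendix.D} the functor $\msf{d}_\scr{A}$ is fully faithful, so it suffices to show $\msf{d}_\scr{A}M=\msf{d}_\scr{A}N$; since these objects are fp-injective in the locally coherent category $\mbb{D}(\scr{A})$, the functor $\Hom_{\mbb{D}(\scr{A})}(-,\msf{d}_\scr{A}M)$ is exact on $\msf{fp}(\mbb{D}(\scr{A}))$, and every finitely presented object of $\mbb{D}(\scr{A})$ is co-presented by objects of the form $\msf{y}(\msf{h}A)=\msf{d}_\scr{A}A$, so $\msf{d}_\scr{A}M$ is determined up to equality by the groups $(\msf{d}_\scr{A}M)(\msf{h}A)=\Hom_{\mbb{D}(\scr{A})}(\msf{y}(\msf{h}A),\msf{d}_\scr{A}M)$ together with the transition maps between them. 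Either route closes the argument.
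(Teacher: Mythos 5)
Your argument is correct and takes essentially the same route as the paper: both reduce via \cref{calculated} to the fact that an object of a finitely accessible category is determined by its restricted Yoneda module over $\msf{fp}(\scr{A})$ (the paper phrases this as writing each object as a direct limit of finitely presented objects). Your explicit attention to the naturality in $A$ of the identification in \cref{calculated} is a reasonable precaution that the paper leaves implicit.
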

\begin{proof}
The forward implication is clear so we only prove the converse. As $\scr{A}$ is finitely accessible, the converse follows from \cref{calculated} by writing any object of $\scr{A}$ as a direct limit of finitely presented objects.
\end{proof}
We now show that $\hat{H}$ commutes with the restricted Yoneda embedding in the desired way.

\begin{prop}\label{commutesoncompacts}
If $X$ is an object of $\T$, then $HX = \hat{H}\msf{y}X$.
\end{prop}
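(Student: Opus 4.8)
The plan is to prove the identity $HX = \hat{H}\msf{y}X$ by first reducing to the case where $X$ is compact, and then, via \cref{applyd}, reducing further to an identity of values at objects $\msf{h}A$ for $A \in \msf{fp}(\scr{A})$. Both $H$ and $\hat{H}\msf{y}$ preserve filtered homology colimits: the former since $H$ is coherent and hence sends filtered homology colimits to direct limits by \cref{enhanceddirectlimits}, and the latter because $\hat{H}$ is definable (preserves direct limits) and $\msf{y}$ sends filtered homology colimits in $\T$ to direct limits in $\Flat{\T^\c}$ by construction (see \cref{homologycolimits}). Since every object of $\T$ is a filtered homology colimit of compacts by \cref{homologycolimits}, it therefore suffices to establish $HX = \hat{H}\msf{y}X$ for $X \in \T^\c$.

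For $X \in \T^\c$, both sides lie in $\scr{A}$, so by \cref{applyd} it is enough to check that $(\msf{d}_\scr{A}(HX))(\msf{h}A) = (\msf{d}_\scr{A}(\hat{H}\msf{y}X))(\msf{h}A)$ for every $A \in \msf{fp}(\scr{A})$. The left-hand side equals $\Hom_\scr{A}(A, HX)$ by \cref{calculated}. For the right-hand side, I would chase the value $\msf{y}X \in \msf{Flat}(\T^\c)$ through the diagram in \cref{step2}. By definition $\hat{H} = \delta(\Phi)$, so $\msf{d}_\scr{A}(\hat{H}\msf{y}X) = \tilde{\Phi}_*(\msf{d}_\T(\msf{y}X))$ where $\msf{d}_\T(\msf{y}X) \in \msf{fpInj}(\mbb{D}(\msf{Flat}(\T^\c)))$. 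Now apply \cref{lowerstar}: $\tilde{\Phi}_* = -\circ\Phi$, so $(\tilde{\Phi}_*(\msf{d}_\T(\msf{y}X)))(\msf{h}A) = (\msf{d}_\T(\msf{y}X))(\Phi(\msf{h}A)) = (\msf{d}_\T(\msf{y}X))(\varphi^\op(\msf{h}A))$.

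To finish I need to identify $(\msf{d}_\T(\msf{y}X))(\varphi^\op(\msf{h}A))$. Since $X \in \T^\c$, the object $\msf{y}X$ is finitely presented in $\msf{Flat}(\T^\c)$, so \cref{appendix.D} gives $\msf{d}_\T(\msf{y}X) = \msf{y}(\msf{h}(\msf{y}X))$ as an object of $\msf{fp}(\mbb{D}(\msf{Flat}(\T^\c))) = \flat{\msf{mop}(\flat{\T^\c})}$; evaluating at a finitely presented functor $g$ on $\flat{\T^\c}$ thus yields $\Hom_{\msf{mop}(\flat{\T^\c})}(g, \msf{h}(\msf{y}X)) = g(\msf{y}X)$ by Yoneda (interpreting $g$ correctly through the $\msf{mop}$ conventions). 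Applying this with $g = \varphi^\op(\msf{h}A)$ gives $(\msf{d}_\T(\msf{y}X))(\varphi^\op(\msf{h}A)) = \varphi(\msf{h}A)(\msf{y}X)$, and by \cref{evalonreps} this is precisely $\Hom_\scr{A}(A, HX)$. Comparing with the left-hand side computed above completes the proof. The main obstacle I anticipate is bookkeeping the numerous $(-)^\op$'s and the $\msf{mop}$ duality correctly — making sure the evaluation $\msf{d}_\T(\msf{y}X)$ applied to $\varphi^\op(\msf{h}A)$ really does unwind, via Yoneda in $\msf{mop}(\flat{\T^\c})$, to the value $\varphi(\msf{h}A)(\msf{y}X)$ of \cref{evalonreps}, rather than to something involving $X$ and $A$ in the wrong variance; once the conventions of \cref{appendix.D}, \cref{littleflats}, and \cref{step1} are pinned down, each individual identification is routine.
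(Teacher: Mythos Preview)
Your proof is correct and follows essentially the same approach as the paper: both use \cref{applyd} and \cref{calculated} to reduce to computing $(\msf{d}_\T\msf{y}X)(\Phi(\msf{h}A))$, then invoke \cref{lowerstar}, the identification $\msf{d}_\T\msf{y}X = \msf{y}\msf{h}\msf{y}X$ for compact $X$ from \cref{appendix.D}, Yoneda in $\msf{mop}(\flat{\T^\c})$, and \cref{evalonreps}. The only difference is order of presentation: you first reduce to compact $X$ using \cref{enhanceddirectlimits} (for $H$) and the definability of $\hat{H}$ together with the definition of homology colimits (for $\hat{H}\msf{y}$), and then carry out the computation just on compacts, whereas the paper applies \cref{applyd} for arbitrary $X$, writes $X$ as a filtered homology colimit of compacts, and pushes the colimit through each step of the computation; the same ingredients appear in both.
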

\begin{proof}
Since $\hat{H} = \msf{d}_\scr{A}^{-1}\tilde{\Phi}_*\msf{d}_\T$ by definition, we must show that $HX = \msf{d}_\scr{A}^{-1}\tilde{\Phi}_*\msf{d}_\T \msf{y} X$. By \cref{applyd}, it therefore suffices to show that
\begin{equation}\label{yonedcommutents}
[\msf{d}_\scr{A}(HX)](\msf{h}A) = [\tilde{\Phi}_*(\msf{d}_\T\msf{y}X)](\msf{h}A)
\end{equation}
for each $A \in \msf{fp}(\scr{A})$. By \cref{calculated}, the left hand side of \cref{yonedcommutents} is equal to $\Hom_{\scr{A}}(A, HX)$.

By \cref{appendix.D} we have $\msf{d}_\msf{T}\msf{y}X = \msf{y}\msf{h}\msf{y}X$, whenever $X$ is compact. If $X$ is arbitrary, we may, write $X=\msf{homcolim}_{I}C_{i}$ as a filtered homology colimit of compact objects, see \cref{homologycolimits}. Since $\y(\msf{homcolim}_{I}C_{i})\simeq \rlim_{I}\y C_{i}$, see \cref{homologycolimits}, we have isomorphisms 
\begin{align*}
\msf{d}_{\T}\y X& \simeq \msf{d}_{\T}\rlim\y C_{i} & \\
&\simeq \rlim\msf{d}_{\T}\y C_{i} &\t{as $\msf{d}_{\T}$ preserves direct limits},\\
&\simeq\rlim\y\msf{h}\y C_{i}  &\t{as each $C_{i}$ is compact}.
\end{align*}
Thus, by incorporating \cref{lowerstar} and noting that $\tilde{\Phi}_*$ commutes with colimits, we see that 
\[
[\tilde{\Phi}_*(\msf{d}_\T\y X)](\msf{h}A)\simeq \rlim_{I} [\tilde{\Phi}_*(\msf{d}_\T\y C_{i})](\msf{h}A)\simeq \rlim_{I}(\msf{y}\msf{h}\msf{y}C_{i} \circ \phi)(\msf{h}A).
\]

Thus for \cref{yonedcommutents} to hold, we must show that $\rlim_{I}(\msf{y}\msf{h}\msf{y}C_{i} \circ \phi)(\msf{h}A) = \Hom_{\scr{A}}(A,HX).$

By definition of $\msf{y}$ and the fact that $\msf{mop}(\msf{flat}(\T^\c))$ is a full subcategory of $(\msf{flat}(\T^\c), \ab)^\op$, we see that for every $i$ we have
\[(\msf{y}\msf{h}\msf{y}C_{i} \circ \phi)(\msf{h}A) = \Hom_{\msf{mop}(\msf{flat}(\T^\c))}(\phi(\msf{h}A), \msf{h}\msf{y}C_{i}) = \Hom_{(\msf{flat}(\T^\c),\ab)}(\msf{h}\msf{y}C_{i}, \phi(\msf{h}A)).\]
By using the Yoneda lemma in conjunction with \cref{evalonreps}, this is moreover equal to 
\[\phi(\msf{h}A)(\msf{y}C_{i}) = \Hom_\scr{A}(A,HC_{i}).\]
Since $A\in\msf{fp}(\scr{A})$, it follows that we have
\[
\rlim_{I}(\msf{y}\msf{h}\msf{y}C_{i} \circ \phi)(\msf{h}A) \simeq \rlim_{I}\Hom_{\scr{A}}(A,H C_{i})\simeq \Hom_{\scr{A}}(A,\rlim_{I} H C_{i}),
\]
but, as $H$ is coherent, it follows that this is equivalent to $\Hom_{\scr{A}}(A,HX)$, as desired.
\end{proof}

Combining all of the above, we now have the necessary tools to prove \cref{uniprop}.
\begin{proof}[Proof of \cref{uniprop}]
	For the implication \ref{coherentitem1} implies \ref{coherentitem2}, the functor $\hat{H}$ is constructed in \cref{Fhatconstruction}, and has the claimed commutation property with Yoneda by \cref{commutesoncompacts}. The converse is clear from the fact that $\hat{H}$ is definable and $\y$ is coherent. 
	
	So it remains to prove the uniqueness claim. As such, we suppose that $G\colon \Flat{\T^\c} \to \scr{A}$ is definable and satisfies $G \circ \y = H$. We will construct a natural isomorphism $\theta\colon \hat{H} \Rightarrow G$. Let $X\in\Flat{\T^{\c}}$ and consider the first two terms of a pure injective resolution of $X$ as follows. Take the pure injective hull $PE(X)$ of $X$, and extend this to a pure exact sequence \[0 \to X \to PE(X) \to Q \to 0.\] 
	Note that as $\Flat{\T^{\c}}$ is definable, each term of this sequence is in $\Flat{\T^{\c}}$.
	Repeating on $Q$, we get another pure exact sequence \[0 \to Q \to PE(Q) \to Q' \to 0.\]
	Since $\hat{H}$ and $G$ are definable, they preserve pure exact sequences, see \cref{definablelfp}, and so by splicing we obtain an exact sequence
	\[0 \to \hat{H}X \to \hat{H}(PE(X)) \xrightarrow{\hat{H}p} \hat{H}(PE(Q))\]
	and similarly for $G$, where $p\colon PE(X) \to PE(Q)$ is a morphism in $\msf{Pinj}(\T)$. 

As $\msf{y}\colon \msf{Pinj}(\T) \xrightarrow{\sim} \msf{Pinj}(\Flat{\T^\c})$ is an equivalence of categories by~\cref{purityflats}\ref{purityflats1}, there exists a map $f\colon E_0 \to E_1$ in $\msf{Pinj}(\T)$ such that $\msf{y}f = p$. Therefore there is a commutative diagram with exact rows
\[
\begin{tikzcd}
0 \ar[r] & \hat{H}X \ar[r] & \hat{H}\msf{y}E_0 \ar[r, "\hat{H}\msf{y}f"] \ar[d, "\simeq"] & \hat{H}\msf{y}E_1 \ar[d, "\simeq"] \\
0 \ar[r] & GX \ar[r] & G\msf{y}E_0 \ar[r, "G\msf{y}f"'] & G\msf{y}E_1
\end{tikzcd}
\]
where the vertical maps are isomorphisms since $G\y X=HX=\hat{H}\y X$. Consequently, we obtain a natural map $\theta_X\colon \hat{H}X \to GX$ which is an isomorphism by construction. 
\end{proof}


\subsection{Extension to all modules}\label{sec:extending}
The universal property proved in the previous section provides a factorisation of any coherent functor through the category of flat functors on $\T^\c$, but sometimes it is convenient to extend this to the whole module category. We give such an extension in this subsection, and then relate it to other universal properties proved by Beligiannis~\cite{bel} and Krause~\cite{krsmash}.

\begin{thm}\label{moduleextension}
Let $\T$ be a compactly generated triangulated category, and let $H\colon \T \to \scr{A}$ be a coherent functor, where $\scr{A}$ is a finitely accessible category with products and kernels. Then there exists a unique left exact, definable functor $\bar{H}\colon \Mod{\T^\c} \to \scr{A}$ such that $H = \bar{H} \circ \msf{y}$.
\end{thm}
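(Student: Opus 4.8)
The plan is to leverage \cref{uniprop} to get the definable functor $\hat H\colon \Flat{\T^\c}\to\scr A$ factoring $H$ through $\y$, and then to extend $\hat H$ along the inclusion $\iota\colon\Flat{\T^\c}\hookrightarrow\Mod{\T^\c}$ to a left exact definable functor $\bar H$. The point is that $\Flat{\T^\c}$ is a definable (hence reflective) subcategory of the locally coherent Grothendieck category $\Mod{\T^\c}$: every module $M$ admits a flat cover, or equivalently, there is a left exact way to approximate modules by flats. More usefully, since $\Mod{\T^\c}$ is locally coherent with $\msf{fp}(\Mod{\T^\c})=\mod{\T^\c}$, a left exact functor out of it is determined by what it does on flats together with its interaction with fp-monomorphisms; concretely, every $M\in\Mod{\T^\c}$ sits in an exact sequence $0\to M\to F^0\to F^1$ with $F^0,F^1\in\Flat{\T^\c}$ (an fp-injective, equivalently flat, copresentation — this exists because $\Flat{\T^\c}=\msf{fpInj}(\Mod{\T^\c})$ by \cref{flatequivalences}\ref{flatandfpinj} and $\Mod{\T^\c}$ has enough fp-injectives).

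First I would record that $\Mod{\T^\c}$ has enough fp-injectives and that fp-injectives coincide with flats, so every module has a flat copresentation as above. Then I would define $\bar H(M)$ as the kernel of $\hat H(F^0)\to\hat H(F^1)$; here I use that $\scr A$ has kernels. One must check this is independent of the chosen copresentation and functorial: given a morphism $M\to M'$, the comparison lifts (possibly non-uniquely) to a map of copresentations because the $F^i$ are fp-injective, and any two lifts are homotopic, so the induced map on kernels is well-defined; this is the standard argument for constructing left-derived-type extensions, and it is formally identical to how one builds the classical extensions of Beligiannis and Krause. Left exactness of $\bar H$ then follows from a diagram chase using the snake lemma on copresentations together with left exactness being automatic from the kernel construction, and the definability (preservation of direct limits and products) of $\bar H$ is inherited from that of $\hat H$: direct limits of modules can be computed via direct limits of flat copresentations since $\Flat{\T^\c}$ is closed under direct limits in $\Mod{\T^\c}$, and similarly for products. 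The identity $\bar H\circ\y = H$ is immediate since for $X\in\T$ the object $\y X$ is already flat, so $\bar H(\y X)=\hat H(\y X)=HX$.

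For uniqueness, suppose $G\colon\Mod{\T^\c}\to\scr A$ is another left exact definable functor with $G\circ\y=H$. Restricting to $\Flat{\T^\c}$, the composite $G\circ\iota$ is definable (composition of definable functors) and satisfies $G\circ\iota\circ\y = H$, so by the uniqueness clause of \cref{uniprop} we get $G\circ\iota\cong\hat H$. Now any $M$ has a flat copresentation $0\to M\to F^0\to F^1$; applying the left exact functor $G$ gives $GM = \ker(GF^0\to GF^1) = \ker(\hat HF^0\to\hat HF^1) = \bar HM$, naturally in $M$. (One should be slightly careful that the isomorphism $G\circ\iota\cong\hat H$ is natural, so that the identification on kernels is natural; this is guaranteed by \cref{uniprop}.)

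The main obstacle I anticipate is the well-definedness and functoriality of $\bar H$ on morphisms — i.e. showing the construction on kernels of flat copresentations does not depend on choices. This requires that lifts of a morphism to a map of copresentations exist and are unique up to the appropriate notion of homotopy, which in turn relies on $F^0,F^1$ being fp-injective and on $\scr A$ having enough structure (kernels) for the comparison of kernels to be canonical. A secondary technical point is verifying that $\bar H$ genuinely preserves direct limits: one needs that a direct limit of flat copresentations of the $M_i$ is a flat copresentation of $\rlim M_i$, which uses exactness of filtered colimits in $\Mod{\T^\c}$ and closure of $\Flat{\T^\c}$ under direct limits, but one must check the limiting sequence is still left exact, i.e. that no spurious homology appears — this is where the locally coherent hypothesis and the identification of flats with fp-injectives do the real work. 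Once these are settled, left exactness, compatibility with $\y$, and uniqueness are formal.
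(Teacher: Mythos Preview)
Your approach is essentially what the paper does: the paper observes that $\msf{fpInj}(\Mod{\T^\c})=\Flat{\T^\c}$, hence $\mbb{D}(\Flat{\T^\c})=\Mod{\T^\c}$, and then invokes Krause's Universal Property 10.4 to extend $\hat H$ to a unique left exact definable functor on $\Mod{\T^\c}$. Your proposal is a by-hand unpacking of exactly that universal property via fp-injective copresentations, and your uniqueness argument is the standard one.

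There is one technical slip worth flagging. You claim that a morphism $M\to M'$ lifts to a map of flat copresentations ``because the $F^i$ are fp-injective'', but fp-injectivity of $G^0$ only yields extensions along monomorphisms with \emph{finitely presented} cokernel, and $F^0/M$ is typically not finitely presented. The painless fix is to use injective copresentations throughout: injectives in $\Mod{\T^\c}$ are in particular fp-injective, hence flat, and for these the lifting and homotopy-uniqueness are classical. A second small point: when you argue that $\bar H$ preserves direct limits, the nontrivial step is not on the $\Mod{\T^\c}$ side (where filtered colimits are exact) but on the $\scr{A}$ side, where you need kernels to commute with filtered colimits. The paper handles this by noting that a finitely accessible category with kernels has left exact direct limits (Crawley-Boevey); you should invoke the same fact.
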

\begin{proof}
By \cref{uniprop}, we have a unique definable functor $\hat{H}\colon \Flat{\T^\c} \to \scr{A}$ such that $H = \hat{H} \circ \msf{y}$. We have $\msf{fpInj}(\Mod{\T^\c}) = \Flat{\T^\c}$ by \cref{flatequivalences}, and therefore $\mathbb{D}(\Flat{\T^\c}) = \Mod{\T^\c}$ by \cref{appendix.D}. As such, there exists a unique left exact, definable functor $\bar{H}\colon \Mod{\T^\c} \to \scr{A}$ extending $\hat{H}$ by \cite[Universal Property 10.4]{Krauselfp}; we note that the hypotheses for this are satisfied, since by \cite[\S2.3, Corollary]{CrawleyBoevey}, any finitely accessible category with kernels has left exact direct limits.
\end{proof}

In the circumstances of the preceding theorem, we actually obtain a left adjoint to $\bar{H}$ for free.

\begin{cor}\label{moduleextensionleftadjoint}
Let $H\colon \T \to \scr{A}$ be a coherent functor, where $\scr{A}$ is a finitely accessible category with products and kernels. Then the functor $\bar{H}\colon\Mod{\T^{\c}}\to\scr{A}$ admits a left adjoint which preserves finitely presented objects.
\end{cor}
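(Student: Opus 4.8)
The plan is to obtain the left adjoint by a purely formal argument, exploiting that the functor $\bar{H}$ produced by \cref{moduleextension} is simultaneously continuous and definable. First I would check that $\bar{H}\colon\Mod{\T^\c}\to\scr{A}$ preserves \emph{all} small limits, not merely finite ones. By \cref{moduleextension} it is left exact and definable; definability gives preservation of arbitrary products, while left exactness together with additivity gives preservation of equalizers, since in an additive category the equalizer of a parallel pair $f,g$ is the kernel of $f-g$. As $\scr{A}$ has products and kernels it has all small limits, so $\bar{H}$ preserves them.

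Next I would invoke the special adjoint functor theorem. The category $\Mod{\T^\c}$ is a Grothendieck category, hence complete, locally small, well-powered, and equipped with an injective cogenerator; this is exactly the setting in which the special adjoint functor theorem upgrades preservation of limits to the existence of a left adjoint. Applying it to the limit-preserving functor $\bar{H}$ yields a functor $L\colon\scr{A}\to\Mod{\T^\c}$ with $L\dashv\bar{H}$. It then remains to see that $L$ preserves finitely presented objects, and here the definability of $\bar{H}$ enters a second time: since $\bar{H}$ preserves direct limits, for any $A\in\msf{fp}(\scr{A})$ and any direct system $(M_i)$ in $\Mod{\T^\c}$ there are natural isomorphisms
\begin{align*}
\Hom_{\Mod{\T^\c}}(LA,\rlim_i M_i) &\cong \Hom_{\scr{A}}(A,\bar{H}\rlim_i M_i)\\
&\cong \Hom_{\scr{A}}(A,\rlim_i \bar{H}M_i)\\
&\cong \rlim_i\Hom_{\scr{A}}(A,\bar{H}M_i)\\
&\cong \rlim_i\Hom_{\Mod{\T^\c}}(LA,M_i),
\end{align*}
the third isomorphism using that $A$ is finitely presented; hence $LA$ is finitely presented.

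The content of the argument is concentrated in the first step: upgrading left exactness of $\bar{H}$ to preservation of arbitrary limits, so that the special adjoint functor theorem becomes applicable. This is where the hypothesis that $\scr{A}$ has kernels is genuinely used, and it is the point I would be most careful to get right; the remaining steps are routine applications of classical adjoint functor machinery. One could alternatively try to read off the left adjoint from the construction $\hat{H}\mapsto\hat{H}_!$ of \cref{extodef} applied to the definable functor $\hat{H}\colon\Flat{\T^\c}\to\scr{A}$, using $\mbb{D}(\Flat{\T^\c})=\Mod{\T^\c}$; but matching $\bar{H}$ with the appropriate restriction of $\hat{H}^*$ appears to require knowing a priori that $\hat{H}^*$ takes values in fp-injective objects, which is not evident, so I would prefer the adjoint functor theorem route.
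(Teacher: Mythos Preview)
Your argument is correct. The paper takes a different, shorter route: it simply cites \cite[Universal Property 10.4]{Krauselfp}, the same result already invoked in the proof of \cref{moduleextension} to construct $\bar{H}$, which packages the existence of the left adjoint as part of its statement. So the paper treats the left adjoint as coming for free from the machinery that produced $\bar{H}$ in the first place, whereas you reprove it from scratch via the special adjoint functor theorem. The argument that $L$ preserves finitely presented objects is identical in both.

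Your approach has the merit of being self-contained and making transparent exactly which properties of $\bar{H}$ are doing the work (continuity from left exactness plus product-preservation, and then direct-limit preservation for the finitely-presented claim). The paper's approach is more economical but relies on the reader being willing to look up the cited result. Your closing worry about the alternative route through $\hat{H}^*$ and $\hat{H}_!$ is slightly misplaced: the paper's citation does not go via $\mbb{D}(\scr{A})$ at all, so there is no issue of matching $\bar{H}$ with a restriction of $\hat{H}^*$; Krause's universal property directly furnishes the left adjoint of the extension $\bar{H}\colon \Mod{\T^\c}\to\scr{A}$.
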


\begin{proof}
The existence of the left adjoint follows from \cite[Universal Property 10.4]{Krauselfp}, and it preserves finitely presented objects since $\bar{H}$ preserves direct limits.
\end{proof}

Certain preservation properties of $\hat{H}$ are equivalent to the same property for $\bar{H}$.

\begin{lem}\label{preservesfpsequiv}
Let $H\colon\T\to\scr{A}$ be a coherent functor, where $\scr{A}$ is a finitely accessible category with products and kernels, and suppose that $\msf{fp}(\scr{A})$ is closed under subobjects. Then the following are equivalent:
\begin{enumerate}[label=(\arabic*)]
\item\label{fpequiv1} $H$ sends compact objects to finitely presented objects;
\item\label{fpequiv2} $\hat{H}\colon\Flat{\T^{\c}}\to\scr{A}$ preserves finitely presented objects;
\item\label{fpequiv3} $\bar{H}\colon\Mod{\T^{\c}}\to\scr{A}$ preserves finitely presented objects.
\end{enumerate}
\end{lem}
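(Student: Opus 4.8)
The plan is to prove the cycle of implications $\ref{fpequiv3}\Rightarrow\ref{fpequiv2}\Rightarrow\ref{fpequiv1}\Rightarrow\ref{fpequiv3}$, using the already-established facts that $\bar{H}$ extends $\hat{H}$ which extends $H$ along the Yoneda embeddings, and that $\y$ restricts to an equivalence $\T^\c\simeq\flat{\T^\c}=\Flat{\T^\c}\cap\mod{\T^\c}$ (see \Cref{flatequivalences}).

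First, $\ref{fpequiv3}\Rightarrow\ref{fpequiv2}$: since $\Flat{\T^\c}\subseteq\Mod{\T^\c}$ is a definable subcategory and $\flat{\T^\c}=\Flat{\T^\c}\cap\mod{\T^\c}$ by \Cref{flatequivalences}, any finitely presented object of $\Flat{\T^\c}$ is finitely presented in $\Mod{\T^\c}$; as $\bar{H}$ restricted to $\Flat{\T^\c}$ is $\hat{H}$, the claim is immediate. Next, $\ref{fpequiv2}\Rightarrow\ref{fpequiv1}$: for $C\in\T^\c$ we have $\y C\in\flat{\T^\c}=\msf{fp}(\Flat{\T^\c})$, so $HC=\hat{H}\y C$ is finitely presented by hypothesis. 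The substance is in $\ref{fpequiv1}\Rightarrow\ref{fpequiv3}$. Here I would use that $\bar{H}$ is left exact and preserves direct limits, and that every finitely presented module $f\in\mod{\T^\c}$ fits in a presentation $\y A\xrightarrow{\y\alpha}\y B\to f\to 0$ with $A,B\in\T^\c$; equivalently there is an exact sequence $0\to K\to \y B\to f\to 0$ with $K=\msf{im}(\y\alpha)$. Applying the left exact functor $\bar{H}$ gives an exact sequence $0\to \bar{H}K\to \bar{H}\y B\to \bar{H}f$, but more usefully, $\bar{H}f$ is the cokernel of $\bar{H}(\y\alpha)=H\alpha\colon HA\to HB$ only if $\bar{H}$ were also right exact, which it need not be. So instead I would argue directly: write $f=\msf{coker}(\y\alpha)$ and note that $\bar{H}$, being left exact and direct-limit preserving, sends the epimorphism $\y B\twoheadrightarrow f$ to a morphism $HB\to\bar{H}f$ whose cokernel computation requires care. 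The cleaner route is to observe that $\bar{H}$, restricted to $\mod{\T^\c}$, lands in $\scr{A}$ and is right exact on $\mod{\T^\c}$ because $\y B\to f$ is an effective epimorphism in the finitely presented category and $\bar H$ preserves cokernels of maps between finitely presented projectives (this follows from $\bar{H}\y=H$ together with the universal property of \Cref{moduleextension} identifying $\bar H$ on $\mod{\T^\c}$ with the right-exact extension of $H|_{\T^\c}$). Granting this, $\bar{H}f=\msf{coker}(H\alpha\colon HA\to HB)$, which is a cokernel of a map between finitely presented objects, hence finitely presented since $\msf{fp}(\scr{A})$ is closed under cokernels (being an abelian-type closure property that holds in any finitely accessible category's finitely presented subcategory, or follows from the hypothesis that $\msf{fp}(\scr{A})$ is closed under subobjects together with $\scr{A}$ having kernels, making $\msf{fp}(\scr{A})$ abelian). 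A general $M\in\Mod{\T^\c}$ is a direct limit of finitely presented modules, and $\bar{H}$ preserves direct limits, giving $\ref{fpequiv3}$.

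The main obstacle I anticipate is the right-exactness issue in $\ref{fpequiv1}\Rightarrow\ref{fpequiv3}$: $\bar{H}$ is only asserted to be \emph{left} exact, so computing $\bar{H}$ on a cokernel presentation $\y A\to\y B\to f\to 0$ is not automatic. The resolution is to use the universal property from \Cref{moduleextension} more carefully — the functor $\bar{H}$ is constructed as the unique left-exact definable extension, but its restriction to $\mod{\T^\c}$ is pinned down by $\bar{H}\y=H$ and agrees with the right-exact extension of $H|_{\T^\c}\colon\T^\c\to\scr{A}$ along $\y\colon\T^\c\hookrightarrow\mod{\T^\c}$ (since $\y B\twoheadrightarrow f$ is a cokernel of a map of representables and any extension commuting with $\y$ must send it to the corresponding cokernel). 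Once this identification is in place, the finite presentability of $\bar{H}f$ reduces to the closure of $\msf{fp}(\scr{A})$ under cokernels, which holds under the stated hypotheses. I would state this right-exactness-on-$\mod{\T^\c}$ as a short preliminary observation before running the three implications.
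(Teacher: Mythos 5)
Your first two implications are fine and match the paper. The gap is in $\ref{fpequiv1}\Rightarrow\ref{fpequiv3}$: you correctly identify that $\bar{H}$ is only left exact, but your proposed fix --- that $\bar{H}$ restricted to $\mod{\T^\c}$ must agree with the right-exact extension of $H|_{\T^\c}$ because ``any extension commuting with $\y$ must send $\y B\twoheadrightarrow f$ to the corresponding cokernel'' --- is false. Commuting with $\y$ only determines $\bar{H}$ on the representable (flat) objects; the value $\bar{H}f$ on a cokernel $f=\msf{coker}(\y\alpha)$ is then forced by \emph{left} exactness via a copresentation, not by right exactness via the presentation. Indeed, \cref{cohomologicaliffexact} shows $\bar{H}$ is exact if and only if $H$ is homological, so for a general coherent $H$ the identity $\bar{H}f=\msf{coker}(H\alpha)$ simply fails. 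A symptom of the wrong turn is that you end up invoking closure of $\msf{fp}(\scr{A})$ under \emph{cokernels}, whereas the stated hypothesis is closure under \emph{subobjects} --- and that hypothesis is exactly what the correct argument needs.

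The paper's route: $\mod{\T^\c}$ is a Frobenius abelian category whose projective-injective objects are $\flat{\T^\c}$, so every $f\in\mod{\T^\c}$ admits a monomorphism $f\hookrightarrow\y A$ with $A\in\T^\c$. Since $\bar{H}$ is left exact (it has a left adjoint by \cref{moduleextensionleftadjoint}), $\bar{H}f$ is a subobject of $\bar{H}\y A=\hat{H}\y A=HA$, which is finitely presented by $\ref{fpequiv1}$; closure of $\msf{fp}(\scr{A})$ under subobjects then gives $\bar{H}f\in\msf{fp}(\scr{A})$. In short: replace your projective presentation by an injective copresentation, which is available precisely because of the Frobenius structure, and the left exactness you already have does all the work. (Your final sentence about general $M\in\Mod{\T^\c}$ being a direct limit of finitely presented modules is also not needed: statement $\ref{fpequiv3}$ only concerns finitely presented inputs.)
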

\begin{proof}
The equivalence of \ref{fpequiv1} and \ref{fpequiv2} is trivial, as $\y\colon\T^{\c}\xrightarrow{\sim}\flat{\T^{\c}}$ is an equivalence. Note that we do not require $\scr{A}$ to have kernels, or for $\msf{fp}(\scr{A})$ to be closed under subobjects for this equivalence to hold. That \ref{fpequiv3} implies \ref{fpequiv2} is also trivial, as $\bar{H}$ agrees with $\hat{H}$ on flat functors.

Finally, for \ref{fpequiv2} implies \ref{fpequiv3}, the category $\mod{\T^{\c}}$ is Frobenius (see, for instance, \cite[Lemma 5.2]{krcq}), with projective-injective objects $\msf{flat}(\T^{\c})$. Consequently, for any $f\in\mod{\T^{\c}}$ there is a monomorphism $0\to f\to\y A$, for some $A\in\T^{\c}$. As $\bar{H}$ has a left adjoint by \cref{moduleextensionleftadjoint}, it is left exact, and thus $0\to\bar{H}f\to\bar{H}\y A$ is a monomorphism in $\scr{A}$. But $\bar{H}\y A=\hat{H}\y A$, which is finitely presented by assumption. As $\msf{fp}(\scr{A})$ is closed under subobjects, we see that $\bar{H}f\in\msf{fp}(\scr{A})$, as desired.  
\end{proof}

We now compare $\bar{H}$ with the results of~\cite{bel} and~\cite{krsmash}. In order to do so, we first investigate when $\bar{H}$ is exact. 

\begin{prop}\label{cohomologicaliffexact}
Let $H\colon \T \to \scr{A}$ be a coherent functor, where $\scr{A}$ is a finitely accessible category with products and kernels. Consider the induced definable functor $\bar{H}\colon\Mod{\T^{\c}}\to\scr{A}$ of \cref{moduleextension}. Then $\bar{H}$ is exact if and only if $H\colon\T\to\scr{A}$ is homological.
\end{prop}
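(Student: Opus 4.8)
The plan is to handle the two implications separately; the forward direction is immediate and the converse is where the work lies. For ``$\bar{H}$ exact $\Rightarrow$ $H$ homological'', start from a triangle $X\to Y\to Z\to\Sigma X$ in $\T$. For each compact $A$ the functor $\Hom_{\T}(A,-)$ is homological and exactness in $\Mod{\T^{\c}}$ is detected pointwise, so $\y$ sends this triangle to a long exact sequence in $\Mod{\T^{\c}}$. Applying the exact functor $\bar{H}$ and using $\bar{H}\circ\y=H$ from \cref{moduleextension} yields a long exact sequence $\cdots\to HX\to HY\to HZ\to\cdots$ in $\scr{A}$, so $H$ is homological.

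For the converse, recall from \cref{moduleextension} that $\bar{H}$ is already left exact and definable, so it suffices to show $\bar{H}$ preserves epimorphisms. I would first reduce to $\mod{\T^{\c}}$: since $\bar{H}$ preserves direct limits, direct limits are exact in $\scr{A}$ by \cite[\S2.3, Corollary]{CrawleyBoevey}, and every short exact sequence in the locally coherent category $\Mod{\T^{\c}}$ is a direct limit of short exact sequences with finitely presented terms (using that kernels of morphisms between finitely presented objects are again finitely presented), it is enough to prove that $\bar{H}$ restricts to an exact functor on $\mod{\T^{\c}}$. As $\mod{\T^{\c}}$ has enough projectives, namely the objects $\y B$ with $B\in\T^{\c}$ (cf. \cref{flatequivalences}), and $\bar{H}$ is left exact, it in turn suffices to show $\bar{H}$ sends every epimorphism $\y B\twoheadrightarrow g$ with $g\in\mod{\T^{\c}}$ to an epimorphism: for an arbitrary epimorphism $f\twoheadrightarrow g$ one lifts such an epimorphism $\y B\twoheadrightarrow g$ from a projective along it to a map $\y B\to f$, and then $\bar{H}(\y B\to g)$ factors through $\bar{H}f\to\bar{H}g$ and is epic, forcing $\bar{H}f\to\bar{H}g$ to be epic.

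The crux is thus to analyse $\bar{H}$ on an epimorphism out of a representable. Given $g\in\mod{\T^{\c}}$, choose a presentation $\y C\xrightarrow{\y\gamma}\y D\to g\to 0$ with $\gamma\colon C\to D$ in $\T^{\c}$ and complete $\gamma$ to a triangle $C\xrightarrow{\gamma}D\xrightarrow{\delta}E\xrightarrow{\epsilon}\Sigma C$ in $\T^{\c}$. Since $\y$ is homological, chasing the resulting long exact sequence identifies $g$ with $\ker(\y\epsilon\colon\y E\to\y\Sigma C)$ in such a way that the canonical epimorphism $\y D\twoheadrightarrow g$ becomes $\y\delta$ corestricted to this kernel. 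Applying the left exact functor $\bar{H}$ and using $\bar{H}\circ\y=H$ then gives $\bar{H}g=\ker(H\epsilon)$ with $\bar{H}(\y D\to g)$ equal to $H\delta$ corestricted to $\ker(H\epsilon)$, and this corestriction is epic precisely because the hypothesis that $H$ is homological makes the long exact sequence of $C\to D\to E\to\Sigma C$ exact at $HE$, i.e.\ $\operatorname{im}(H\delta)=\ker(H\epsilon)$. I expect the main obstacle to be the bookkeeping in this last step — checking that the canonical epimorphism $\y D\twoheadrightarrow g$ really is the corestriction of $\y\delta$, which is a routine but slightly fiddly chase in $\mod{\T^{\c}}$, and confirming that ``exact on $\mod{\T^{\c}}$ plus preservation of direct limits'' upgrades to exactness on all of $\Mod{\T^{\c}}$. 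A more economical route for the converse would be to note that $\bar{H}$ is left exact, coproduct- and direct-limit-preserving, so when $H$ is homological it must coincide, by uniqueness, with the homological extension of Krause~\cite{krsmash}, which is exact; but I would present the direct argument above to keep the proof self-contained.
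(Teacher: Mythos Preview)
Your proof is correct but takes a genuinely different route from the paper. The forward direction is identical. For the converse, the paper argues on the injective side rather than the projective side: given an arbitrary short exact sequence $0\to G_{1}\to G_{2}\to G_{3}\to 0$ in $\Mod{\T^{\c}}$, it takes the first terms of injective resolutions of $G_{1}$ and $G_{3}$, using that injectives in $\Mod{\T^{\c}}$ are of the form $\y X$ with $X$ pure injective so that the differentials are $\y$ of maps in $\T$; completing these maps to triangles in $\T$ and applying the homological functor $H$ yields exact sequences computing $\bar{H}G_{1}$ and $\bar{H}G_{3}$ (via left exactness of $\bar{H}$). A horseshoe lemma then produces a compatible resolution of $G_{2}$, and a diagram chase shows the column $0\to\bar{H}G_{1}\to\bar{H}G_{2}\to\bar{H}G_{3}\to 0$ is exact. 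Thus the paper works directly with arbitrary modules and the pure-injective/injective equivalence, whereas you first reduce to $\mod{\T^{\c}}$ via direct limits and then exploit projective presentations and triangles in $\T^{\c}$. Both approaches hinge on the same mechanism --- turning maps in a resolution into triangles and invoking homologicality of $H$ --- but apply it on opposite sides. Your alternative shortcut via uniqueness against Krause's exact extension is also noted in the paper, in the discussion immediately following the proposition, as a comparison rather than as the proof.
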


\begin{proof}
First, suppose that $\bar{H}$ is exact. As $\y$ is homological and $H = \bar{H} \circ \y$, it is clear that $H$ is homological as required. For the converse, we use an argument dual to that of \cite[Lemma 2.1]{krsmash}. Suppose that $0\to G_{1}\to G_{2}\to G_{3}\to 0$ is an exact sequence in $\Mod{\T^{\c}}$. Consider the start of an injective resolution of $G_{1}$:
\[
0\to G_{1}\to \y X_{1}^{0}\xrightarrow{\y \alpha_{1}} \y X_{1}^{1} \xrightarrow{\y \beta_{1}} \y X_{1}^{2},
\]
where the resolution takes this form by the equivalence of categories $\y\colon \msf{Pinj}(\T) \xrightarrow{\sim} \msf{Inj}(\T^\c)$. There is then a triangle $\Sigma^{-1}X_{1}^{2}\to X_{1}^{0} \xrightarrow{\alpha_{1}}X_{1}^{1} \xrightarrow{\beta_{1}}X_{1}^{2}$, and thus an exact sequence \[H\Sigma^{-1}X_{1}^{2}\to H X_{1}^{0} \xrightarrow{H \alpha_{1}}H X_{1}^{1} \xrightarrow{H \beta_{1}}H X_{1}^{2}\] since $H$ is assumed to be homological. Since $\bar{H}$ is left exact and $\bar{H}\y=H$, it follows that 
\[
0\to \bar{H}G_{1}\to  HX_{1}^{0} \xrightarrow{H\alpha_{1}}H X_{1}^{1} \xrightarrow{H \beta_{1}}H X_{1}^{2}
\]
is an exact sequence. A similar argument for $G_{3}$ and an application of the horseshoe lemma gives us a commutative diagram
\[
\begin{tikzcd}[column sep=1.6cm, row sep=0.5cm]
{} & 0 \arrow[d] & 0\arrow[d]& 0\arrow[d] & 0\arrow[d] \\
0 \arrow[r]& \bar{H}G_{1}\arrow[d]\arrow[r] & H X_{1}^{0}\arrow[d] \arrow[r, "H \alpha_{1}"] & H X_{1}^{1} \arrow[d]\arrow[r, "H\beta_{1}"]& H X_{1}^{2}\arrow[d]\\
0 \arrow[r]& \bar{H}G_{2} \arrow[d]\arrow[r]& H (X_{1}^{0}\oplus X_{3}^{0})\arrow[d]\arrow[r, "H (\alpha_{1}\oplus \alpha_{2})"]& H (X_{1}^{1}\oplus X_{3}^{1}) \arrow[d]\arrow[r, "H (\beta_{1}\oplus \beta_{2})"]& H (X_{1}^{2}\oplus X_{3}^{2})\arrow[d]\\
0\arrow[r] & \bar{H}G_{3} \arrow[d]\arrow[r]& H X_{3}^{0}\arrow[d]\arrow[r, "H \alpha_{2}"] & H X_{3}^{1} \arrow[d]\arrow[r, "H \beta_{2}"]& H X_{3}^{2}\arrow[d]\\
{} & 0 & 0 & 0 & 0
\end{tikzcd}
\]
where, a priori, all rows and the second to fourth columns are exact. Diagram chasing shows that the first column is also exact, as required.
\end{proof}

\begin{chunk}
We may now use \cref{cohomologicaliffexact} to relate the functor obtained in \cref{moduleextension} to other extensions of functors to modules which exist in the literature.

The first such statement we consider is Krause~\cite[Corollary 2.4]{krsmash}, which states that any coproduct preserving, homological functor $H\colon\T\to\A$, where $\A$ is an AB5 category, extends uniquely to an exact coproduct preserving functor $H'\colon \Mod{\T^{\c}}\to\A$ which satisfies $H=H'\circ\y$. 

Another notable result is due to Beligiannis~\cite[Theorem 3.4]{bel}. This states that if $H\colon\T\to\A$, where $\A$ is an arbitrary abelian category, is a homological functor which sends pure triangles to short exact sequences, then there is a unique exact functor $H^{*}\colon\Mod{\T^{\c}}\to\A$ such that $H=H^{*}\circ \y$. 

In our setting, when $H\colon\T\to\scr{A}$ is coherent \cref{moduleextension} provides a unique definable functor $\bar{H}\colon\Mod{\T^{\c}}\to\scr{A}$ such that $\bar{H}\circ\y=H$. Any coherent functor preserves coproducts and sends pure triangles to short exact sequences. Consequently whenever $F$ is additionally homological the three functors
\[
\bar{H},H',H^{*}\colon\Mod{\T^{\c}}\to\scr{A}
\] 
given by \cref{moduleextension}, Krause's extension and Beligiannis's extension, respectively, all coincide, each by their own universal properties.

However, we do not require our original functor be homological, instead we just require it to be coherent. Beligiannis's condition of sending pure triangles to short exact sequences is, of course, a subrequirement of being coherent, as is Krause's requirement of preserving coproducts. It is interesting to note that combining these two conditions, and adding the preservation of products, enables one to obtain a lift without any requirement that the functor is homological.
\end{chunk}


\subsection{Transferring definability and endofinite objects}
We now show how, given a coherent functor $H\colon\T\to\scr{A}$, one can push forward and pull back information about definable subcategories.
\begin{prop}\label{coherentpreimageandimage}
Let $\scr{A}$ be a finitely accessible category with products, and $H\colon\T\to\scr{A}$ a coherent functor. 
\begin{enumerate}[label=(\arabic*)]
\item\label{coherentpreimage} If $\mc{D}\subseteq\scr{A}$ is a definable subcategory, then $H^{-1}\mc{D}$ is a definable subcategory of $\T$.
\item\label{coherentimage} If $\mc{D}\subseteq\T$ is a definable subcategory, then $\msf{pure}(H\mc{D})$, the closure of the image of $H\mc{D}$ under pure subobjects, is a definable subcategory of $\scr{A}$. Moreover, $\msf{pure}(H\mc{D})=\msf{pure}(\hat{H}\msf{Def}(\y\mc{D}))=\msf{Def}(H\mc{D})$.
\item\label{coherentdefbuild} For any class $\msf{X}\subseteq\T$, there is an inclusion $H(\msf{Def}(\msf{X}))\subseteq\msf{Def}(H\msf{X})$. Thus, if $\msf{X}$ definably builds $Z$, then $H\msf{X}$ definably builds $HZ$.
\end{enumerate}
\end{prop}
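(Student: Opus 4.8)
The plan is to leverage the universal property from \cref{uniprop}, which provides a definable functor $\hat H\colon \Flat{\T^\c}\to\scr A$ with $H=\hat H\circ\y$, together with the dictionary between definable subcategories of $\T$ and of $\Flat{\T^\c}$ from \cref{purityflats}\ref{purityflats3} and the definable building formalism of \cref{definablebuilding}. Throughout, the key fact is that definable functors between finitely accessible categories preserve direct limits, products, and pure monomorphisms (\cref{definablelfp}).

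For \ref{coherentpreimage}, I would first observe that $H^{-1}\mc D=\{X\in\T : HX\in\mc D\}$. Since $\mc D$ is cut out by the vanishing of a set of coherent functors $\scr A\to\ab$ (equivalently by closure under products, pure subobjects and direct limits), one pulls back along $H$: if $\{G_i\}$ is a set of coherent functors on $\scr A$ witnessing $\mc D$, then each composite $G_i\circ H\colon\T\to\ab$ is coherent (a composite of a coherent functor with a coherent functor to $\ab$ is coherent, as used already in the proof of \cref{enhanceddirectlimits}), and $H^{-1}\mc D$ is precisely the class of objects annihilated by all the $G_i\circ H$. Hence $H^{-1}\mc D$ is definable by definition.

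For \ref{coherentimage}, I would argue as follows. Using \cref{uniprop}, $\hat H$ is definable, so it preserves direct limits, products and pure monomorphisms; by \cite[\S 13]{dac}, the closure under pure subobjects of the image of a definable subcategory under a definable functor between finitely accessible categories is definable — this is the standard pushforward of definable subcategories along definable functors, so $\msf{pure}(\hat H\,\widetilde{\mc D})$ is a definable subcategory of $\scr A$ for $\widetilde{\mc D}=\msf{Def}(\y\mc D)$. Now $H\mc D=\hat H\y\mc D\subseteq\hat H\,\widetilde{\mc D}$, and conversely every object of $\widetilde{\mc D}$ is a pure subobject of a product of objects from $\y\mc D$ (this is precisely the description of $\widetilde{\mc D}=\msf{Def}(\y\mc D)$ via its pure injectives together with \cref{definablebuilding}); applying $\hat H$, which preserves products and pure monomorphisms, shows each object of $\hat H\,\widetilde{\mc D}$ is a pure subobject of a product of objects of $H\mc D$, hence lies in $\msf{Def}(H\mc D)=\msf{pure}(H\mc D)$ (the last equality because a class of the form ``pure subobjects of products of objects of a given class closed under direct limits'' — and $H\mc D$ need not be, so one should say $\msf{Def}(H\mc D)\supseteq\msf{pure}(H\mc D)$ trivially and the reverse inclusion from the product/pure-mono argument, noting $\msf{Def}(H\mc D)$ is also sandwiched). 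This gives the chain of inclusions $\msf{pure}(H\mc D)\subseteq\msf{pure}(\hat H\,\widetilde{\mc D})\subseteq\msf{Def}(H\mc D)\subseteq\msf{pure}(H\mc D)$, forcing all three to coincide.

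For \ref{coherentdefbuild}, given $\msf X\subseteq\T$, set $\mc D=\msf{Def}(\msf X)$, which is definable; by \ref{coherentimage} we have $H\mc D\subseteq\msf{Def}(H\mc D)=\msf{Def}(H\msf X)$ (the last equality since $\msf{Def}$ of a class equals $\msf{Def}$ of the definable subcategory it generates). Hence $H(\msf{Def}(\msf X))=H\mc D\subseteq\msf{Def}(H\msf X)$. The statement about definable building is then a restatement: if $\msf X$ definably builds $Z$, i.e.\ $Z\in\msf{Def}(\msf X)$, then $HZ\in H(\msf{Def}(\msf X))\subseteq\msf{Def}(H\msf X)$. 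The main obstacle I anticipate is in \ref{coherentimage}, specifically being careful that $\msf{pure}(\hat H\,\widetilde{\mc D})$ really is definable — this requires knowing the image of a definable subcategory under a definable functor, once closed under pure subobjects, is definable, and also that the pure-injective/product description of $\widetilde{\mc D}=\msf{Def}(\y\mc D)$ is compatible with $\hat H\y=H$; once these two ingredients are in place the three-way equality is a formal sandwiching argument.
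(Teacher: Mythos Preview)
Your approach to \ref{coherentpreimage} is correct but genuinely different from the paper's: you argue directly that the composites $G_i\circ H$ are coherent functors $\T\to\ab$ and cut out $H^{-1}\mc D$, whereas the paper factors through $\hat H$ and invokes the fact that preimages of definable subcategories under definable functors between finitely accessible categories are definable, then pulls back along $\y$ via \cref{purityflats}\ref{purityflats3}. Your route is more elementary (it avoids \cref{uniprop} entirely for this part) but requires knowing that definable subcategories of $\scr A$ are vanishing loci of functors of the form $f\!\up_{\scr A}$; the paper's route uses the machinery already in place.

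For \ref{coherentimage} you have the same ingredients as the paper, but your final chain is miswired. You write $\msf{pure}(H\mc D)\subseteq\msf{pure}(\hat H\widetilde{\mc D})\subseteq\msf{Def}(H\mc D)\subseteq\msf{pure}(H\mc D)$, and the last inclusion is not justified. The paper's chain is $\msf{Def}(H\mc D)\subseteq\msf{pure}(\hat H\widetilde{\mc D})\subseteq\msf{pure}(H\mc D)$, together with the trivial $\msf{pure}(H\mc D)\subseteq\msf{Def}(H\mc D)$. The first of these holds because $\msf{pure}(\hat H\widetilde{\mc D})$ is definable and contains $H\mc D$; the second because your pure-mono-into-a-product argument actually lands in $\msf{pure}(H\mc D)$ (the product of objects of $H\mc D$ lies in $H\mc D$ since $H$ preserves products and $\mc D$ is product-closed), not merely in $\msf{Def}(H\mc D)$ as you state. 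So you understate your own argument and then compensate with an unjustified step.

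Your argument for \ref{coherentdefbuild} is circular as written: the claimed equality $\msf{Def}(H\mc D)=\msf{Def}(H\msf X)$, where $\mc D=\msf{Def}(\msf X)$, is precisely the inclusion $H(\msf{Def}(\msf X))\subseteq\msf{Def}(H\msf X)$ you are trying to prove. The paper instead passes through $\Flat{\T^\c}$: since $Z\in\msf{Def}(\msf X)$ gives $\y Z\in\msf{Def}(\y\msf X)$ by \cref{purityflats}\ref{purityflats3}, one has $HZ=\hat H\y Z\in\hat H(\msf{Def}(\y\msf X))$, and the latter is contained in $\msf{Def}(H\msf X)$ by \ref{coherentimage}.
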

\begin{proof}
For \ref{coherentpreimage}, consider the definable functor $\hat{H}\colon \Flat{\T^\c} \to \scr{A}$ of \cref{uniprop}. As $\hat{H}$ is definable, $\hat{H}^{-1}(\mc{D})$ is a definable subcategory of $\msf{Flat}(\T^{\c})$ by~\cite[Proposition 13.3]{dac}. Therefore $\msf{y}^{-1}(\hat{H}^{-1}(\mc{D}))$ is a definable subcategory of $\T$ by \cref{purityflats}\ref{purityflats3}. Since $\hat{H}\y=H$, we have $H^{-1}(\mc{D}) = \y^{-1}(\hat{H}^{-1}(\mc{D}))$ which is definable as required. 

For \ref{coherentimage}, consider $\tilde{\mc{D}}:=\msf{Def}(\y\mc{D})$, the unique definable subcategory of $\Flat{\T^{\c}}$ corresponding to $\mc{D}$ under the bijection of \cref{purityflats}\ref{purityflats3}. Applying the definable lift $\hat{H}\colon\Flat{\T^{\c}}\to\scr{A}$ of \cref{uniprop}, gives that $\msf{pure}(\hat{H}\tilde{\mc{D}})$ is a definable subcategory of $\scr{A}$, by the paragraph after \cite[Corollary 13.4]{dac}. We now show that $\msf{Def}(H\mc{D})=\msf{pure}(\hat{H}\tilde{\mc{D}})=\msf{pure}(H\mc{D})$, which will conclude the proof.

To see this, first note that if $X\in \mc{D}$, then $H\mc{D}\subseteq \hat{H}\tilde{\mc{D}}$ as $\hat{H}\y X=HX$. Therefore $\msf{Def}(H\mc{D})\subseteq \msf{pure}(\hat{H}\tilde{\mc{D}})$, as the latter is definable. Now, suppose that $A\in\msf{pure}(\hat{H}\tilde{\mc{D}})$, so that there is a pure monomorphism $A \to \hat{H}Z$ for some $Z \in \tilde{\mc{D}}$. Now, by \cref{appendix.definable} together with \cref{purityflats}\ref{purityflats2}, there is a pure monomorphism $Z \to \prod_I \y E_i$ with each $E_i \in \mc{D} \cap \msf{pinj}(\T)$. Therefore as $\hat{H}$ is definable, we see that $A$ is a pure subobject of $\prod_I \hat{H}\y E_i \simeq H\prod_I E_i$, and hence $A \in \msf{pure}(H\mc{D})$ as required. Thus we have inclusions $\msf{Def}(H\mc{D})\subseteq\msf{pure}(\hat{H}\tilde{\mc{D}})\subseteq\msf{pure}(H\mc{D})$. Now, clearly $\msf{pure}(H\mc{D})\subseteq\msf{Def}(H\mc{D})$, and hence $\msf{Def}(H\mc{D})=\msf{pure}(\hat{H}\tilde{\mc{D}})=\msf{pure}(H\mc{D})$ as required. 

For \ref{coherentdefbuild}, since $Z \in \msf{Def}(\msf{X})$ we have $\y Z \in \msf{Def}(\y \msf{X})$ by \cref{purityflats}\ref{purityflats3}. Therefore $HZ = \hat{H}\y Z \in \hat{H}(\msf{Def}(\y\msf{X}))$. By \ref{coherentimage}, the latter is contained in $\msf{Def}(H\msf{X})$ which concludes the proof.
\end{proof}

Lastly, we show how endofiniteness is respected by coherent functors. Recall that, in a finitely accessible category with products $\scr{A}$, an object $E\in\scr{A}$ is \emph{endofinite} if $\t{length}_{\t{End}_{\scr{A}}(E)}\Hom_{\T}(A,E)<\infty$ for all $A\in\msf{fp}(\scr{A})$. If $\T$ is a compactly generated triangulated category, an object $X\in\T$ is \emph{endofinite} if $\t{length}_{\t{End}_{\T}(X)}\Hom_{\T}(A,X)<\infty$ for all $A\in\T^{\c}$.

\begin{prop}\label{cohpreservesendofin}
Let $H\colon\T\to\scr{A}$ be a coherent functor. If $X\in\T$ is endofinite, then $HX\in\scr{A}$ is endofinite.
\end{prop}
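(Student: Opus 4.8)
The plan is to verify the defining inequality directly: for each $A\in\msf{fp}(\scr{A})$ I would show that $\Hom_{\scr{A}}(A,HX)$ has finite length over $\operatorname{End}_{\scr{A}}(HX)$. The key observation is that $G_A:=\Hom_{\scr{A}}(A,H(-))\colon\T\to\ab$ is again a coherent functor: since $H$ preserves products so does $G_A$, and since $H$ sends filtered homology colimits to direct limits (by \cref{enhanceddirectlimits}) while $\Hom_{\scr{A}}(A,-)$ preserves direct limits because $A$ is finitely presented, the composite $G_A$ sends filtered homology colimits to direct limits. Hence $G_A$ is coherent by \cref{enhanceddirectlimits}, and as coherence of $\ab$-valued functors is Krause's original notion (cf. \cref{coherent}), $G_A$ admits a presentation
\[
\Hom_\T(B,-)\to\Hom_\T(B',-)\to G_A\to 0
\]
with $B,B'\in\T^{\c}$.

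I would then evaluate this presentation at $X$. Its naturality makes it a presentation of $\operatorname{End}_\T(X)$-modules
\[
\Hom_\T(B,X)\to\Hom_\T(B',X)\to\Hom_{\scr{A}}(A,HX)\to 0,
\]
and since $X$ is endofinite the middle term has finite length over $\operatorname{End}_\T(X)$, whence so does its quotient $\Hom_{\scr{A}}(A,HX)$. The ring homomorphism $\operatorname{End}_\T(X)\to\operatorname{End}_{\scr{A}}(HX)$ induced by $H$ carries the natural $\operatorname{End}_\T(X)$-action on $\Hom_{\scr{A}}(A,HX)$ onto the restriction of the $\operatorname{End}_{\scr{A}}(HX)$-action (both are post-composition), so every $\operatorname{End}_{\scr{A}}(HX)$-submodule of $\Hom_{\scr{A}}(A,HX)$ is an $\operatorname{End}_\T(X)$-submodule; therefore $\ell_{\operatorname{End}_{\scr{A}}(HX)}\Hom_{\scr{A}}(A,HX)\le\ell_{\operatorname{End}_\T(X)}\Hom_{\scr{A}}(A,HX)<\infty$. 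As $A$ was arbitrary, $HX$ is endofinite.

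I expect the only delicate point to be the first step: confirming that $\Hom_{\scr{A}}(A,H(-))$ really does lie among the coherent functors $\T\to\ab$, so that it possesses a finite presentation by representable functors on compacts. This is precisely where the coherence of $H$, via \cref{enhanceddirectlimits}, enters, and it is what allows the argument to remain inside $\T$ rather than detour through $\hat{H}$ and $\Flat{\T^{\c}}$; everything afterwards is a routine restriction-of-scalars comparison of module lengths.
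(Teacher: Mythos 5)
Your proof is correct, and it takes a genuinely different route from the paper's. The paper deduces the statement from the definable lift: it invokes \cref{uniprop} to obtain $\hat{H}\colon\Flat{\T^{\c}}\to\scr{A}$, cites an external lemma that definable functors between finitely accessible categories preserve endofinite objects, and uses that $\y$ restricts to an equivalence on endofinite objects. You instead stay entirely inside $\T$: you observe that $\Hom_{\scr{A}}(A,H(-))$ is a coherent functor $\T\to\ab$ (which the paper itself verifies in the course of proving \cref{enhanceddirectlimits}, and which by \cref{abcoh} is equivalent to admitting a finite presentation by representables on compacts), evaluate that presentation at $X$ to exhibit $\Hom_{\scr{A}}(A,HX)$ as an $\operatorname{End}_{\T}(X)$-module quotient of $\Hom_{\T}(B',X)$, and then compare lengths along the ring map $\operatorname{End}_{\T}(X)\to\operatorname{End}_{\scr{A}}(HX)$. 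All three steps are sound: the presentation is natural, hence $\operatorname{End}_{\T}(X)$-linear at $X$; endofiniteness of $X$ bounds the length of the middle term since $B'\in\T^{\c}$; and since the $\operatorname{End}_{\T}(X)$-action on $\Hom_{\scr{A}}(A,HX)$ factors through $\operatorname{End}_{\scr{A}}(HX)$, every $\operatorname{End}_{\scr{A}}(HX)$-submodule is an $\operatorname{End}_{\T}(X)$-submodule, giving the length inequality. What your approach buys is self-containedness and transparency — it in effect reproves the special case of the cited lemma on definable functors preserving endofinite objects — at the cost of not reusing the lifting machinery; the paper's proof is shorter given its references and is more in keeping with its general strategy of transporting statements through $\hat{H}$ and $\Flat{\T^{\c}}$.
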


\begin{proof}
Let $\widehat{H}\colon\Flat{\T^{\c}}\to\scr{A}$ be the unique definable lift of $H$ as in \cref{uniprop}. Since $\hat{H}$ is a definable functor between finitely accessible categories, it preserves endofinite objects by \cite[Lemma 6.8]{BWZ}. As $\y$ establishes an equivalence of categories between endofinite objects in $\T$ and endofinite objects in $\Flat{\T^{\c}}$, see for example \cite[Lemma 10.6]{Beligiannis}, the result follows as $H = \hat{H} \circ \y$.  
\end{proof}

\subsection{Examples of coherent functors}\label{cohexamples}
In this section we give some examples of coherent functors.
\begin{chunk}\textbf{The restricted Yoneda embedding.}\label{uniyoneda}
Throughout we have used the fact that $\y\colon\T\to\Flat{\T^{\c}}$ transfers purity. More specifically, it preserves products and coproducts, and sends pure triangles to pure exact sequences since $\mrm{Ext}^1(f,\y X)=0$ for any finitely presented functor $f$ by \cref{flatequivalences}. Consequently it is a coherent functor. Moreover, by \cref{uniprop} it is the universal coherent functor for any finitely accessible category with products $\scr{A}$, in the sense that any coherent functor factors through it.
\end{chunk}

\begin{chunk}\textbf{Homological functors into Grothendieck categories.}\label{intoAB5}
Let $\A$ be a finitely accessible Grothendieck category, and suppose that $H\colon\T\to\A$ is a homological functor. The property of being homological renders one criterion for $H$ to be coherent redundant, namely that of asking $H$ to send pure triangles to pure exact sequences. 

Indeed, suppose that $H$ preserves coproducts, then by \cite[Corollary 2.4]{krsmash}, there is a unique colimit preserving functor $H'\colon\Mod{\T^{\c}}\to\A$ such that $H'\circ\y=H$. If $X\to Y\to Z$ is a pure triangle in $\T$, then $0\to \y X\to \y Y\to \y Z\to 0$ is pure exact in $\Mod{\T^{\c}}$ by \cref{definitionpureexact}. Since a short exact sequence is pure if and only if it is a direct limit of split exact sequences,  it follows that $0\to H'\y X\to H'\y Y\to H'\y Z\to 0$ is a pure exact sequence in $\A$ as $H'\colon \Mod{\T^\c} \to \A$ preserves colimits. Yet, by construction, this is the same as saying that $0\to HX\to HY\to HZ\to 0$ is pure exact; in other words $H$ sends pure triangles to pure exact sequences. Therefore, we deduce that a homological functor into a finitely accessible Grothendieck category $\A$ is coherent if and only if it preserves products and coproducts.
\end{chunk}

\begin{chunk}\textbf{$\msf{t}$-structures.}\label{tstructure}
A natural functor from a triangulated category to an abelian category is the homology associated to a t-structure. Suppose that the homological functor $\mrm{H}_{\mrm{t}}\colon \T \to \T^\heartsuit$ is coherent; not every t-structure has this property, but ones that do appear in nature, for example t-structures associated to classical silting sets as in \cite[Proposition 2]{NSZ}. We briefly demonstrate how the above results show that such t-structures satisfy desirable properties. 

Consider the induced definable lift $\bar{\mrm{H}}_{\mrm{t}}\colon \Mod{\T^\c} \to \T^\heartsuit$ from \cref{moduleextension}. Since $\bar{\mrm{H}}_{\mrm{t}}$ is exact, its kernel is a Serre subcategory which is moreover definable by \cref{coherentpreimageandimage}. Note that $\mrm{Im}(\bar{\mrm{H}}_{\mrm{t}}) = \T^\heartsuit$, so that there is a localisation sequence
\[
\begin{tikzcd}[column sep=5em]
\msf{Ker}(\bar{\mrm{H}}_{\msf{t}}) \arrow[r,hookrightarrow, shift left= 4pt] \arrow[r, leftarrow, shift right=3pt]& \Mod{\T^{\c}} \arrow[r, shift left= 4pt, "\bar{\mrm{H}}_{\msf{t}}"] \arrow[r, hookleftarrow, shift right=3pt,swap, "\rho"] & \T^\heartsuit.
\end{tikzcd}
\]
Since $\bar{\mrm{H}}_{\mrm{t}}$ has an exact left adjoint $\Lambda$ by \cref{moduleextensionleftadjoint}, this localisation sequence is part of a recollement, and thus $\Lambda$ is fully faithful. Since $\Lambda$ is exact, it also preserves equalisers. Therefore by the dual Beck's monadicity theorem~\cite[Lemma 4.1]{BalmerCameron}, we deduce that the adjunction $(\Lambda, \bar{\mrm{H}}_{\mrm{t}})$ is comonadic, which gives an algebraic presentation of the heart in terms of the module category:
\[\T^\heartsuit \simeq \msf{Comod}_{\Mod{\T^\c}}(\Lambda \circ \bar{\mrm{H}}_{\mrm{t}}).\]
\end{chunk}

\begin{chunk}\textbf{Coherent functors into $\ab$.}\label{abcoh}
The coherent functors into $\ab$ are precisely the coherent functors of \cref{coherent}. This is the contents of \cite[Theorem A and Proposition 5.1]{krcoh}, which shows that an additive functor $F\colon\T\to\ab$ is coherent, in the sense of \cref{coherent}, if and only if it preserves products and sends filtered homology colimits to colimits. As such, combining this with~\cref{enhanceddirectlimits}, one sees that $F$ is in $\msf{Coh}(\T,\ab)$ if and only if it is coherent in the sense of \cref{coherent}. In this case, one can completely characterise the homological coherent functors. They are precisely the functors of the form $\Hom_{\T}(C,-)$ where $C\in\T^{\c}$. This can be deduced immediately from Brown representability for the dual, and the definition of compact objects. This characterisation of homological coherent functors also appears as \cite[Proposition 2.9]{krsmash}.
\end{chunk}


\section{Definable functors}
In this section we make the key definition of the paper, that of a definable functor between compactly generated triangulated categories, before investigating properties of such functors. Let us motivate the coming definition. A definable functor should be a functor $\T\to\U$ which preserves the pure structure. As highlighted in \cref{purityflats}, the pure structures on $\T$ and $\Flat{\T^{\c}}$ are the same. Consequently, any functor $\T\to\U$ which preserves this structure would, ideally, relate to a purity preserving functor $\Flat{\T^{\c}}\to\Flat{\U^{\c}}$, which is just a definable functor in the sense of \cref{definablelfp}. We formalise this motivation in the following definition.

\begin{defn}\label{definitionofdefinable}
Let $F\colon\T\to\U$ be an additive functor between compactly generated triangulated categories. We say that $F$ is a \emph{definable functor} if there exists a definable functor $\hat{F}\colon \Flat{\T^{\c}}\to\Flat{\U^{\c}}$ such that $\msf{y} \circ F = \hat{F} \circ \msf{y}$. We then say that $\hat{F}$ is a \emph{definable lift} of $F$.
\end{defn}

\begin{rem}Let us firstly address some questions the reader may have about this definition.
\begin{enumerate}
\item The above definition may seem unsatisfactory, since it does not immediately provide an intrinsic characterisation of definable functors in terms of properties of the triangulated category. However, we will show later that the above definition is equivalent to $F$ satisfying some preservation properties which can be checked entirely on the triangulated level, see \cref{enhancement}.
\item When $F$ is definable, a definable lift $\hat{F}$ is in fact unique as we will show in \cref{uniquenesshat}.
\item The reasons for only asking for a lift to flat functors rather than the whole module category mirrors that in \cref{whyflatsremark}.

\end{enumerate}
\end{rem}

\begin{rem}
	One can use a similar argument to the proof of \cref{uniprop} to show that if $F,G\colon\T\to\U$ are definable functors and $\eta\colon F\Rightarrow G$ is a natural transformation, then there is a natural transformation $\hat{\eta}\colon\hat{F}\Rightarrow\hat{G}$ between their definable lifts. Consequently, the assignment sending a triple $(\T,F,\eta)$ to $(\Flat{\T^{\c}},\hat{F},\hat{\eta})$ gives a 2-functor from the 2-category of compactly generated triangulated categories and definable functors to the 2-category of finitely accessible categories with products and definable functors.
\end{rem}

In the following subsections we investigate the previous definition in more detail. In the first subsection, we study the implications on the functor $F$ itself, and then we investigate properties of the lift $\hat{F}$. Afterwards, we provide some criteria to check whether a given functor is definable, and then relate definable lifts to lifts of triangulated functors.


\subsection{Properties of definable functors}
Let us first give some elementary properties of definable functors, showing that they behave well with respect to purity and definable subcategories.

\begin{lem}\label{definablepreserves}
Let $F\colon\T\to\U$ be a definable functor. Then
\begin{enumerate}[label=(\arabic*)]
\item\label{coproducts} $F$ preserves coproducts;
\item\label{products} $F$ preserves products;
\item\label{pure} $F$ preserves pure triangles;
\item\label{phantoms} $F$ preserves phantom maps.
\item\label{endofinite} $F$ preserves endofinite objects.
\end{enumerate}
\end{lem}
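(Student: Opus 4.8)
The plan is to leverage \cref{uniprop} to reduce everything to the corresponding statements for the definable lift $\hat F\colon\Flat{\T^\c}\to\Flat{\U^\c}$, which is a definable functor between finitely accessible categories and hence already enjoys the analogous properties by the standard theory of \cref{definablelfp} (see \cite[\S 13]{dac}). The key observation throughout is the defining commutativity $\y\circ F=\hat F\circ\y$, together with the fact that $\y\colon\msf{Pinj}(\T)\xrightarrow{\sim}\msf{Pinj}(\Flat{\T^\c})$ is an equivalence and that $\y$ reflects the relevant structure on $\T$ (so it suffices to check properties after applying $\y$). In particular, a functor preserves coproducts, products, or pure triangles if and only if the composite with the faithful, purity-reflecting $\y$ does.

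For \ref{coproducts} and \ref{products}, I would argue that $\y$ preserves and reflects (co)products: $\y$ preserves products and coproducts since it is coherent (see \cref{uniyoneda}), and it reflects them because a cocone (resp.\ cone) in $\T$ is a coproduct (resp.\ product) if and only if its image under $\y$ is, using that $\Flat{\T^\c}$ computes (co)limits of $\y$-images and that $\y$ is the restricted Yoneda embedding. Given a coproduct $\bigoplus_i X_i$ in $\T$, we have $\y F(\bigoplus_i X_i)=\hat F\y(\bigoplus_i X_i)\simeq\hat F(\bigoplus_i \y X_i)\simeq\bigoplus_i\hat F\y X_i=\bigoplus_i\y FX_i$, where we used that $\hat F$ is definable and hence preserves coproducts. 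Since $\y$ reflects coproducts, $F(\bigoplus_i X_i)\simeq\bigoplus_i FX_i$. The argument for products is identical, using that definable functors between finitely accessible categories preserve products.

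For \ref{pure} and \ref{phantoms}: a triangle $X\to Y\to Z\xrightarrow{f}\Sigma X$ in $\T$ is pure exactly when $0\to\y X\to\y Y\to\y Z\to 0$ is exact in $\Mod{\T^\c}$ (equivalently $\y f=0$), by \cref{definitionpureexact}. Applying $\hat F$, which is definable and hence preserves pure exact sequences, to this short exact sequence in $\Flat{\T^\c}$ yields a pure exact sequence $0\to\hat F\y X\to\hat F\y Y\to\hat F\y Z\to 0$, i.e.\ $0\to\y FX\to\y FY\to\y FZ\to 0$ is (pure) exact; hence $FX\to FY\to FZ\to\Sigma FX$ is a pure triangle. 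For \ref{phantoms}, if $f$ is phantom then $\y f=0$, so $\y(Ff)=\hat F(\y f)=\hat F(0)=0$, whence $Ff$ is phantom. (Note \ref{phantoms} also follows from \ref{coproducts}, \ref{products}, \ref{pure} once one knows these are equivalent characterisations, as in \cref{intro:deftheorem}, but the direct argument is cleaner here.)

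Finally, \ref{endofinite} is essentially \cref{cohpreservesendofin} applied to the coherent functor $\y\circ F\colon\T\to\Flat{\U^\c}$: if $X\in\T$ is endofinite then so is $\y X\in\Flat{\T^\c}$ by \cite[Lemma 10.6]{Beligiannis}, and since $\hat F$ is a definable functor between finitely accessible categories it preserves endofinite objects by \cite[Lemma 6.8]{BWZ}, so $\y FX=\hat F\y X$ is endofinite in $\Flat{\U^\c}$; applying the equivalence $\y\colon\msf{Pinj}(\U)\supseteq\{\text{endofinites}\}\xrightarrow{\sim}\{\text{endofinites in }\Flat{\U^\c}\}$ again gives that $FX$ is endofinite. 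I do not anticipate a genuine obstacle here; the only mild subtlety is being careful that $\y$ reflects coproducts and products (so that the preservation statements descend from $\hat F$ to $F$ rather than merely being asserted after post-composition with $\y$), which is where I would spend the most care.
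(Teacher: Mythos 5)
Most of your argument tracks the paper's proof closely: parts (1), (2) and (5) are essentially identical to what the paper does (the paper phrases the descent from $\hat F$ to $F$ via conservativity of $\y$ applied to the canonical comparison map, which is exactly the ``$\y$ reflects (co)products'' point you flag), and your direct argument for (4) via $\y(Ff)=\hat F(\y f)=0$ is clean and, if anything, slightly more economical than the paper's reduction of (4) to (3).

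There is, however, a genuine gap in your proof of (3). You conclude from the pure exactness of $0\to\y FX\to\y FY\to\y FZ\to 0$ that ``$FX\to FY\to FZ\to\Sigma FX$ is a pure triangle''. But $F$ is only assumed additive, not triangulated, so the image of the pure triangle $X\xrightarrow{\alpha}Y\xrightarrow{\beta}Z\to\Sigma X$ under $F$ is merely a candidate triangle (and the last map lands in $F\Sigma X$, not $\Sigma FX$ --- there is no given comparison between these). The criterion of \cref{definitionpureexact} characterises purity \emph{among triangles}, so it cannot be invoked until you know the image diagram actually underlies a triangle; a candidate triangle whose Yoneda image is short exact need not be distinguished (e.g.\ replace the connecting map of a non-split pure triangle by $0$: the Yoneda image is unchanged but the result is a triangle only if the original splits). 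This is precisely the point the paper isolates as the content of (3): one first observes that $F\alpha$ is a pure monomorphism because $\y F\alpha=\hat F\y\alpha$ is a monomorphism, completes $F\alpha$ to a genuine triangle $FX\xrightarrow{F\alpha}FY\xrightarrow{\gamma}C$ (pure by construction), and then compares this with the candidate triangle $FX\to FY\to FZ$: since $F\beta\circ F\alpha=0$, the weak cokernel property produces a map between $C$ and $FZ$ whose Yoneda image is an isomorphism of cokernels of $\y F\alpha$, whence it is an isomorphism by conservativity of $\y$. Your proof needs this extra step to be complete; without it, the deduction in (3) does not go through.
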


\begin{proof}
For \ref{coproducts}, suppose $\{X_{i}\}_{I}$ is a set of objects in $\T$. There are natural isomorphisms
\begin{align*}
\msf{y}F(\oplus_{I} X_i) &\simeq \hat{F}\msf{y}(\oplus_I X_{i})  \\
&\simeq \oplus_{I} \hat{F}\msf{y}X_{i}, \t{ as $\msf{y}$ and $\hat{F}$ preserve coproducts} \\
&\simeq \oplus_{I} \msf{y}FX_{i} \\
&\simeq \msf{y}(\oplus_{I} FX_{i}), \t{ again, as $\msf{y}$ preserves coproducts.}
\end{align*}
Now, the above isomorphism is precisely the image of the canonical map $\oplus_{I}F(X_{i})\to F(\oplus X_{i})$ under $\msf{y}$; since $\msf{y}$ is conservative, it follows that $F$ preserves coproducts, which concludes the proof of \ref{coproducts}. The proof of \ref{products} is essentially identical to the proof of \ref{coproducts} so we omit it.

Let us now prove \ref{pure}. Let $X\xrightarrow{\alpha}Y\xrightarrow{\beta}Z$ be a pure triangle, which is equivalent to $0\to\y X\xrightarrow{\y\alpha}\y Y\xrightarrow{\y\beta}\y Z\to 0$ being a pure exact sequence in $\Flat{\T^{\c}}$. Since we have not assumed $F$ is triangulated, it is not a priori the case that $FX\to FY\to FZ$ is itself a triangle, but it is, by the functoriality of $F$, a candidate triangle. We show it is actually isomorphic to a pure triangle.

As $\hat{F}\colon\Flat{\T^{\c}}\to\Flat{\U^{\c}}$ is definable, the sequence $0\to \hat{F}\y X\xrightarrow{\hat{F}\y\alpha}\hat{F}\y Y\xrightarrow{\hat{F}\y\beta}\hat{F}\y Z\to 0$ is pure exact in $\Flat{\T^{\c}}$. In particular, since $\hat{F}\y\alpha = \y F\alpha$, we see that $F\alpha$ is a pure monomorphism in $\U$. Complete $F\alpha$ to a triangle $FX\xrightarrow{F\alpha}FY\xrightarrow{\gamma}C$, which is pure by construction. Then there is, by the (TR3) axiom, a diagram of candidate triangles
\[
\begin{tikzcd}
FX \arrow[d, equal] \arrow[r, "F\alpha"] & FY \arrow[d, equal] \arrow[r, "F\beta"] & FZ \arrow[d, dashed, "\delta"] \\
FX \arrow[r, "F\alpha"] & FY \arrow[r, "\gamma"] & C
\end{tikzcd}
\]
which commutes. Since the first two arrows are isomorphisms, so is the third. In particular, we see that the top row is a pure triangle, as desired. 

Part \ref{phantoms} follows from the fact that a map is phantom if and only if it is the connecting morphism in a pure triangle, together with \ref{pure}.

For \ref{endofinite}, we have that $\widehat{F}\y\colon\T\to\Flat{\U^{\c}}$ is coherent, so preserves endofinite objects by \cref{cohpreservesendofin}. As $\widehat{F}\y=\y F$, it follows that $\y F(X)$ is endofinite for any endofinite $X$, and thus, by \cite[Lemma 10.6]{Beligiannis}, $X$ is endofinite as claimed.
\end{proof}

\begin{rem}
    We note that \cref{definablepreserves}\ref{endofinite} is a non-triangulated generalisation of \cite[Lemma 3.8]{BIKP}.
\end{rem}

From the above, we immediately deduce the following. 
\begin{cor}\label{preservespurestructure}
A definable functor $F\colon\T\to\U$ preserves pure triangles and pure injective objects.
\end{cor}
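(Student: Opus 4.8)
The statement to prove is \cref{preservespurestructure}: a definable functor $F\colon\T\to\U$ preserves pure triangles and pure injective objects. Given that \cref{definablepreserves} has just been established, the plan is essentially to extract the two relevant consequences from the preservation properties recorded there, together with the characterisations of purity recalled in the preliminaries.

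\medskip

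The plan is to argue as follows. Preservation of pure triangles is literally \cref{definablepreserves}\ref{pure}, so nothing further is needed for that half; one only has to note (as in the proof of that part) that although $FX \to FY \to FZ$ need not a priori be a triangle since $F$ is not assumed triangulated, it is a candidate triangle which is isomorphic to an honest pure triangle. For preservation of pure injectives, I would take a pure injective object $X \in \T$ and show $FX$ is pure injective in $\U$ using the Jensen--Lenzing criterion from \cref{pureinjectiveequivalences}\ref{jl}: an object is pure injective exactly when, for every set $I$, the summation map $\bigoplus_I X \to X$ factors through the canonical map $\bigoplus_I X \to \prod_I X$. Applying the additive functor $F$ to such a factorisation, and using that $F$ preserves both coproducts and products by \cref{definablepreserves}\ref{coproducts} and \ref{products}, one obtains that the summation map $\bigoplus_I FX \to FX$ factors through $\bigoplus_I FX \to \prod_I FX$ — one has to check that $F$ sends the canonical summation and comparison maps of $X$ to the corresponding canonical maps of $FX$, which is immediate from additivity and the fact that these maps are defined componentwise via identities and zero maps. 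Hence $FX$ satisfies the Jensen--Lenzing criterion and is pure injective.

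\medskip

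Alternatively, and perhaps more cleanly, preservation of pure injectives can be deduced directly from the existence of the definable lift: since $\hat{F}\colon\Flat{\T^\c}\to\Flat{\U^\c}$ is a definable functor between finitely accessible categories, it preserves pure injective objects by \cref{definablelfp}; and by \cref{purityflats}\ref{purityflats1} the restricted Yoneda embedding gives equivalences $\y\colon\msf{Pinj}(\T)\xrightarrow{\sim}\msf{Pinj}(\Flat{\T^\c})$ and $\y\colon\msf{Pinj}(\U)\xrightarrow{\sim}\msf{Pinj}(\Flat{\U^\c})$. So if $X$ is pure injective in $\T$, then $\y X$ is pure injective in $\Flat{\T^\c}$, hence $\hat{F}\y X = \y FX$ is pure injective in $\Flat{\U^\c}$, and therefore $FX$ is pure injective in $\U$ by the inverse equivalence. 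I expect there is essentially no obstacle here; the only mild point of care is making sure the Jensen--Lenzing argument correctly identifies the image under $F$ of the structure maps, but since this paper already uses exactly that criterion elsewhere, it is routine. I would write the proof in the short two-line style: pure triangles are handled by \cref{definablepreserves}\ref{pure}, and pure injectives by combining \cref{purityflats}\ref{purityflats1}, \cref{definablelfp}, and the identity $\hat{F}\y = \y F$.

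\begin{proof}
That $F$ preserves pure triangles is precisely \cref{definablepreserves}\ref{pure}. For pure injectives, suppose $X \in \T$ is pure injective. By \cref{purityflats}\ref{purityflats1}, $\msf{y}X$ is pure injective in $\Flat{\T^\c}$. Since $\hat{F}$ is a definable functor between finitely accessible categories, it preserves pure injective objects by \cref{definablelfp}, so $\hat{F}\msf{y}X = \msf{y}FX$ is pure injective in $\Flat{\U^\c}$. Applying the inverse of the equivalence $\msf{y}\colon \msf{Pinj}(\U) \xrightarrow{\sim} \msf{Pinj}(\Flat{\U^\c})$ of \cref{purityflats}\ref{purityflats1}, we conclude that $FX$ is pure injective in $\U$.
\end{proof}
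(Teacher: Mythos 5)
Your proposal is correct. The first half is identical to the paper's proof: preservation of pure triangles is read off from \cref{definablepreserves}\ref{pure}. For pure injectives, the paper takes the route you sketch in your discussion but do not ultimately write down, namely the Jensen--Lenzing criterion of \cref{pureinjectiveequivalences}\ref{jl}: since $F$ preserves coproducts and products, applying $F$ to a factorisation of the summation map $\bigoplus_I X \to X$ through $\bigoplus_I X \to \prod_I X$ yields the corresponding factorisation for $FX$. This argument stays entirely on the triangulated level and uses only \cref{definablepreserves}\ref{coproducts} and \ref{products}. Your written proof instead descends to the finitely accessible world via the lift $\hat{F}$, using \cref{purityflats}\ref{purityflats1} and the fact that definable functors between finitely accessible categories preserve pure injectives (\cref{definablelfp}). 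This is also valid; the one point to phrase with slightly more care is the final step, where rather than ``applying the inverse of the equivalence'' (which only produces some pure injective $E$ with $\y E \cong \y FX$, and $\y$ is neither full nor faithful in general), it is cleaner to note that $\y FX$ is flat and pure injective, hence injective in $\Mod{\U^\c}$ by \cref{purityflats}\ref{purityflats0}, so $FX$ is pure injective by \cref{pureinjectiveequivalences}(2). Either route is a one-liner given the surrounding machinery; the paper's has the marginal advantage of not invoking the lift at all.
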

\begin{proof}
Preservation of pure triangles is \cref{definablepreserves}\ref{pure}, while the preservation of pure injective objects follows from the Jensen-Lenzing criterion (see \cref{pureinjectiveequivalences}).
\end{proof}

In the subsequent section, we will show that conditions \ref{coproducts}, \ref{products}, and \ref{pure} of \cref{definablepreserves} actually characterise definable functors. Before that, let us show that definable functors behave as desired in relation to definable subcategories of $\T$ and $\U$.

\begin{prop}\label{preimageandimage}
Let $F\colon\T\to\U$ be a definable functor.
\begin{enumerate}[label=(\arabic*)]
\item\label{preimagedefinable} If $\mc{C}$ is a definable subcategory of $\U$, then the subcategory $F^{-1}\mc{C}=\{X\in\T:FX\in\mc{C}\}$ is a definable subcategory of $\T$.
\item\label{psoimage} If $\mc{D}$ is a definable subcategory of $\T$, then the closure of $F\mc{D}$ under pure subobjects in $\U$, denoted $\msf{pure}(F\mc{D})$, is a definable subcategory of $\U$.
\end{enumerate}
\end{prop}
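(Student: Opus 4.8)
The strategy is to reduce both parts to \cref{coherentpreimageandimage}, applied to the composite $H := \y\circ F = \hat F\circ\y\colon\T\to\Flat{\U^{\c}}$, where $\hat F\colon\Flat{\T^{\c}}\to\Flat{\U^{\c}}$ is a definable lift of $F$. Since $\y\colon\T\to\Flat{\T^{\c}}$ is coherent (\cref{uniyoneda}) and $\hat F$, being definable, preserves coproducts, products and pure exact sequences, the composite $H$ is coherent; and by the uniqueness in \cref{uniprop} its definable lift in the sense of that theorem is $\hat F$ itself, so \cref{coherentpreimageandimage} applies with $\hat H = \hat F$. Throughout one uses the bijection of \cref{purityflats}\ref{purityflats3} between definable subcategories of $\U$ and of $\Flat{\U^{\c}}$, under which $\mc C = \y^{-1}(\msf{Def}(\y\mc C))$ for every definable $\mc C\subseteq\U$ (and similarly for $\T$).

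For \ref{preimagedefinable}, I would just note that
\[
F^{-1}\mc C = \{X\in\T : FX\in\mc C\} = \{X\in\T : \y FX\in\msf{Def}(\y\mc C)\} = H^{-1}(\msf{Def}(\y\mc C)),
\]
which is a definable subcategory of $\T$ by \cref{coherentpreimageandimage}\ref{coherentpreimage}. For \ref{psoimage}, \cref{coherentpreimageandimage}\ref{coherentimage} gives that $\msf{pure}(H\mc D)$ is a definable subcategory of $\Flat{\U^{\c}}$, so $\mc E := \y^{-1}(\msf{pure}(H\mc D))$ is a definable subcategory of $\U$ by \cref{purityflats}\ref{purityflats3}; it then remains to show $\mc E = \msf{pure}(F\mc D)$. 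The inclusion $\msf{pure}(F\mc D)\subseteq\mc E$ is immediate: a pure monomorphism $Y\to FX$ with $X\in\mc D$ is sent by $\y$ to a pure monomorphism $\y Y\to\y FX = HX$ in $\Flat{\U^{\c}}$, so $\y Y\in\msf{pure}(H\mc D)$.

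The reverse inclusion $\mc E\subseteq\msf{pure}(F\mc D)$ is the crux, and the main obstacle. Given $Y\in\mc E$, unravelling the proof of \cref{coherentpreimageandimage}\ref{coherentimage} produces a pure monomorphism $\y Y\to\prod_I\hat F\y E_i$ with each $E_i\in\mc D\cap\msf{pinj}(\T)$. Since $\hat F$, $\y$, and $F$ all preserve products — the last by \cref{definablepreserves}\ref{products} — there is an isomorphism $\prod_I\hat F\y E_i\simeq\y F(\prod_I E_i)$, and $Z := \prod_I E_i$ lies in $\mc D$ as $\mc D$ is definable. The key point is that $Z$ is \emph{pure injective}, being a product of pure injective objects; hence condition~(4) of \cref{pureinjectiveequivalences} ensures $\Hom_{\U}(Y, Z)\to\Hom(\y Y,\y Z)$ is a bijection, so the pure monomorphism $\y Y\to\y Z$ equals $\y g$ for some $g\colon Y\to Z$, which is then a pure monomorphism. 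Thus $Y\in\msf{pure}(F\mc D)$, completing the argument. The delicate point is precisely this last step: a pure monomorphism between the $\y$-images of two objects of $\U$ need not arise from a morphism of $\U$, since $\y$ is in general neither full nor faithful, and the argument only closes because the target can be arranged to be pure injective, where $\y$ is full and faithful on incoming morphisms.
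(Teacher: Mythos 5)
Your proof is correct and follows essentially the same route as the paper's: both parts reduce to \cref{coherentpreimageandimage} applied to the coherent functor $\y\circ F=\hat F\circ\y$, and the reverse inclusion in (2) is closed in exactly the paper's way, by purely embedding into a product $\prod_I\hat F\y E_i\simeq\y F(\prod_I E_i)$ and using that $\y$ is full on maps into pure injectives. The only blemish is a notational slip in the last step: the target of the map you lift is $FZ=F(\prod_I E_i)\in\U$, not $Z\in\T$, and the pure injectivity you need is that of $FZ$, which follows from \cref{preservespurestructure} (or from $\y FZ$ being a product of pure injectives in $\Flat{\U^{\c}}$).
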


\begin{proof}
For \ref{preimagedefinable}, consider the unique definable subcategory $\msf{Def}(\msf{y}\mc{C})$ of $\Flat{\U^{\c}}$ corresponding to $\mc{C}$ as in \cref{purityflats}\ref{purityflats3}. Then as $\hat{F}\y$ is coherent, by \cref{coherentpreimageandimage}\ref{coherentpreimage},  $\y^{-1}\hat{F}^{-1}(\msf{Def}(\msf{y}\mc{C}))$ is a definable subcategory of $\T$. Now we have
\[
\msf{y}^{-1}(\hat{F}^{-1}(\msf{Def}(\msf{y}\mc{C}))) = F^{-1}(\msf{y}^{-1}(\msf{Def}(\msf{y}\mc{C}))) = F^{-1}\mc{C}
\]
by using that $\hat{F}\msf{y} = \msf{y}F$, and \cref{purityflats}\ref{purityflats3} in turn. Therefore $F^{-1}\mc{C}$ is definable as required.

For \ref{psoimage}, set $\tilde{\mc{D}}=\msf{Def}(\y\mc{D})$. By considering the coherent functor $\y\circ F\colon\T\to\Flat{\U^{\c}}$, the subcategory $\msf{pure}(\hat{F}\tilde{\mc{D}})$ is definable by \cref{coherentpreimageandimage}\ref{coherentimage}. By \cref{purityflats}\ref{purityflats3}, it suffices to show that 
\[
\msf{pure}(F\mc{D})=\msf{y}^{-1}(\msf{pure}(\hat{F}\tilde{\mc{D}})).
\]
If $X\in\msf{pure}(F\mc{D})$, then there is an object $D\in\mc{D}$ and a pure monomorphism $X\to FD$. Then $\msf{y}X\to\msf{y}FD$ is a pure monomorphism by \cref{definitionpureexact}, and, as $\msf{y}FD\simeq \widehat{F}\msf{y}D$, we see that $\msf{y}X\in\msf{pure}(\widehat{F}\tilde{\mc{D}})$. For the other direction, if $\msf{y}X\in\msf{pure}(\widehat{F}\tilde{\mc{D}})$ then there is a pure monomorphism $\msf{y}X\to \widehat{F}\tilde{D}$, with $\tilde{D}\in \tilde{\mc{D}}$. Now, by \cref{appendix.definable} together with \cref{purityflats}\ref{purityflats2}, there is a pure monomorphism $\tilde{D}\to \prod_{i}\msf{y}E_{i}$ with each $E_{i}\in\mc{D}\cap\msf{pinj}(\T)$, and thus, as $\widehat{F}$ is definable, we see that $\msf{y}X$ is a pure subobject of $\prod_{I}\widehat{F}\msf{y}E_{i}$. Since $\prod_{I}\widehat{F}\msf{y}E_{i}\simeq \msf{y}F\prod_{I}E_{i}$, and as $F\prod_{I} E_{i}$ is pure injective by \cref{preservespurestructure}, there are isomorphisms
\[
\Hom_{\Mod{\U^{\c}}}(\msf{y}X,\prod_{I}\widehat{F}\msf{y}E_{i})\simeq \Hom_{\Mod{\U^\c}}(\msf{y}X, \msf{y}F\prod_{I} E_{i}) \simeq \Hom_{\U}(X,F\prod_{I}E_{i})
\] 
by \cref{definablepreserves}\ref{products} and \cref{pureinjectiveequivalences}. As such, there is a map $X\to F\prod_{I}E_{i}$ which is, by the above, a pure monomorphism. As $F\prod_{I}E_{i}\in F\mc{D}$, the claim is proved.
\end{proof}

As a special case of \cref{preimageandimage}\ref{preimagedefinable} we have the following.
\begin{cor}\label{kernelofdefinable}
Let $F:\T\to\U$ be a definable functor. Then $\msf{Ker}(F):=\{X\in\T:FX=0\}$ is a definable subcategory of $\T$. \qed
\end{cor}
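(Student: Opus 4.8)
The plan is to deduce this immediately from \cref{preimageandimage}\ref{preimagedefinable} by recognising $\msf{Ker}(F)$ as the preimage under $F$ of the zero subcategory of $\U$. The only point requiring any thought is that $\{0\}\subseteq\U$ is itself a definable subcategory in the sense of \cref{coherent}. First I would observe that for any $A\in\U^{\c}$ the representable functor $\Hom_{\U}(A,-)\colon\U\to\ab$ is coherent: it is the cokernel of the zero map $\Hom_{\U}(0,-)\to\Hom_{\U}(A,-)$, and both $0$ and $A$ lie in $\U^{\c}$. Since $\U^{\c}$ is skeletally small, $\Phi:=\{\Hom_{\U}(A,-):A\in\U^{\c}\}$ is a set of coherent functors; and because $\U$ is compactly generated and $\U^{\c}$ is closed under shifts, the compact objects jointly detect the zero object, so that
\[
\{0\}=\{Y\in\U:\Hom_{\U}(A,Y)=0\ \text{for all}\ A\in\U^{\c}\}.
\]
This exhibits $\{0\}$ as a definable subcategory of $\U$.

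It then remains only to note that $\msf{Ker}(F)=\{X\in\T:FX=0\}=F^{-1}(\{0\})$, so that \cref{preimageandimage}\ref{preimagedefinable} applies verbatim with $\mc{C}=\{0\}$ and yields that $\msf{Ker}(F)$ is a definable subcategory of $\T$. I do not anticipate any genuine obstacle: the argument is a one-line reduction to \cref{preimageandimage}, and the only mildly nontrivial ingredient is the definability of the zero subcategory, which is just the standard fact that in a compactly generated triangulated category the representables of compact objects form a set of coherent functors whose common vanishing locus is $\{0\}$.
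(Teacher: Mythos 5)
Your proposal is correct and matches the paper's intended argument exactly: the paper derives \cref{kernelofdefinable} as an immediate special case of \cref{preimageandimage}\ref{preimagedefinable} applied to $\mc{C}=\{0\}$, leaving the (easy) definability of the zero subcategory implicit. Your verification that $\{0\}=\{Y\in\U:\Hom_{\U}(A,Y)=0\text{ for all }A\in\U^{\c}\}$ is cut out by a set of coherent functors is the right way to fill in that implicit step.
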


Let us now show how definable functors interact with definable building in the sense of \cref{definablebuilding}.

\begin{lem}\label{deffunctorspreservebuilding}
Let $F\colon\T\to\U$ be a definable functor, and suppose that $\msf{X}\subset\T$. If $\msf{X}$ definably builds $T\in\T$, then $F\msf{X}=\{FX:X\in\msf{X}\}$ definably builds $FT$.
\end{lem}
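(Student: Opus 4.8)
The plan is to reduce the statement to the already-established \cref{coherentpreimageandimage}\ref{coherentdefbuild} for coherent functors into finitely accessible categories. The key observation is that the composite $\msf{y}\circ F\colon\T\to\Flat{\U^{\c}}$ is a coherent functor. Indeed, since $F$ is definable we have $\msf{y}\circ F=\hat{F}\circ\msf{y}$ with $\hat{F}$ definable; as $\msf{y}\colon\T\to\Flat{\T^{\c}}$ is coherent (\cref{uniyoneda}) and $\hat{F}$ preserves direct limits, products, and pure exact sequences, the composite $\hat{F}\circ\msf{y}$ preserves coproducts and products and sends pure triangles to pure exact sequences. (This is precisely the coherent functor denoted $\widehat{F}\msf{y}$ in the proof of \cref{definablepreserves}\ref{endofinite}.)

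Next, apply \cref{coherentpreimageandimage}\ref{coherentdefbuild} with $H=\msf{y}\circ F$ and the class $\msf{X}$: since $\msf{X}$ definably builds $T$, that is $T\in\msf{Def}(\msf{X})$, we obtain $HT=\msf{y}FT\in\msf{Def}(H\msf{X})=\msf{Def}(\msf{y}F\msf{X})$ in $\Flat{\U^{\c}}$. Equivalently, $\msf{y}(F\msf{X})$ definably builds $\msf{y}(FT)$ in $\Flat{\U^{\c}}$.

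Finally, we translate this back to $\U$. By \cref{definablebuilding} (which rests on \cref{purityflats}\ref{purityflats3}), a class $\msf{Y}\subset\U$ definably builds an object $W\in\U$ if and only if $\msf{y}\msf{Y}$ definably builds $\msf{y}W$ in $\Flat{\U^{\c}}$. Applying this with $\msf{Y}=F\msf{X}$ and $W=FT$ shows that $F\msf{X}$ definably builds $FT$, as required. The only point requiring care is recognising the composite $\msf{y}\circ F$ as a coherent functor and invoking the dictionary between definable building in a compactly generated triangulated category and in its associated category of flat functors; the substantive content is carried by \cref{coherentpreimageandimage}, so there is no genuine obstacle.
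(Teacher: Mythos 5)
Your proof is correct and follows essentially the same route as the paper's: both translate the statement into $\Flat{\T^{\c}}$ and $\Flat{\U^{\c}}$ via the restricted Yoneda embedding using \cref{purityflats}\ref{purityflats3}, and both exploit the fact that the definable lift $\hat{F}$ preserves products, direct limits, and pure subobjects. The only difference is that you delegate the middle step to the already-established \cref{coherentpreimageandimage}\ref{coherentdefbuild} applied to the coherent functor $\msf{y}\circ F$, whereas the paper argues directly with $\hat{F}$; this is a harmless repackaging rather than a different argument.
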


\begin{proof}
By \cref{purityflats}\ref{purityflats3}, $\msf{X}$ definably builds $T$ if and only if $\y\msf{X}$ definably builds $\y T$ inside $\Flat{\T^{\c}}$; in other words, $\y T$ is obtained from closing $\y\msf{X}$ under products, filtered colimits and pure subobjects. Since $F$ is definable, there is a definable functor $\hat{F}\colon \Flat{\T^{\c}}\to\Flat{\U^{\c}}$ such that $\hat{F}\y\simeq\y F$. As $\hat{F}$ is definable, it preserves filtered colimits, products and pure subobjects. Consequently, $\hat{F}\y T$ is definably built by $\hat{F}\y\msf{X}$, which is equivalent to saying that $\y FT$ is definably built by $\y F\msf{X}$. Another application of \cref{purityflats}\ref{purityflats3} shows this is equivalent to $F\msf{X}$ definably building $FT$.
\end{proof}


\subsection{Uniqueness of the definable lift} 
We now discuss the properties of the lift $\hat{F}$ in the definition of a definable functor, showing in particular that it is unique, and also has some other convenient features. Firstly, we show that the definable lift $\hat{F}$ is unique.
\begin{prop}\label{uniquenesshat}
	If $F$ is definable, then there is a unique definable lift $\hat{F}$. That is, if $G\colon\Flat{\T^{\c}}\to\Flat{\U^{\c}}$ is definable such that $G \circ \msf{y}=\msf{y} \circ F$, then $G=\hat{F}$.
\end{prop}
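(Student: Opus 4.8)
The plan is to deduce the uniqueness of the definable lift directly from the uniqueness clause of \cref{uniprop}. Since $F$ is definable, there is by definition a definable functor $\hat{F}\colon\Flat{\T^{\c}}\to\Flat{\U^{\c}}$ with $\y\circ F=\hat{F}\circ\y$; suppose $G\colon\Flat{\T^{\c}}\to\Flat{\U^{\c}}$ is another definable functor with $G\circ\y=\y\circ F$. We must show $G=\hat{F}$.

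The key observation is that $\y\circ F\colon\T\to\Flat{\U^{\c}}$ is a coherent functor into the finitely accessible category with products $\Flat{\U^{\c}}$ (see \cref{flatequivalences}). Indeed, $\y\circ F=\hat{F}\circ\y$ is a composite of the coherent functor $\y\colon\T\to\Flat{\T^{\c}}$ with the definable functor $\hat{F}$, and such a composite is coherent: it preserves products since both $\y$ and $\hat{F}$ do; it preserves coproducts since a coproduct is a filtered colimit of finite coproducts and $\hat{F}$, being definable, preserves both filtered colimits and finite products, exactly as in the proof of \cref{enhanceddirectlimits}; and it sends pure triangles to pure exact sequences since $\y$ does by \cref{definitionpureexact} and $\hat{F}$ preserves pure exact sequences by \cref{definablelfp}.

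Now both $\hat{F}$ and $G$ are definable functors $\Flat{\T^{\c}}\to\Flat{\U^{\c}}$ satisfying $\hat{F}\circ\y=G\circ\y=\y\circ F$; that is, both are definable lifts of the coherent functor $\y\circ F$ along the restricted Yoneda embedding. By the uniqueness assertion of \cref{uniprop}, applied with $\scr{A}=\Flat{\U^{\c}}$ and $H=\y\circ F$, any two such definable lifts coincide, and therefore $G=\hat{F}$, as required.

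There is no substantial obstacle here beyond recognising that $\y\circ F$ is coherent so that \cref{uniprop} applies. Alternatively, one could reprove the uniqueness from scratch by mimicking the argument used for the uniqueness part of \cref{uniprop}: construct a natural isomorphism $\hat{F}\Rightarrow G$ out of the first two terms of a pure injective resolution $0\to X\to PE(X)\to PE(Q)$ in $\Flat{\T^{\c}}$, using that the pure injective objects of $\Flat{\T^{\c}}$ are precisely the images under $\y$ of the pure injective objects of $\T$ (\cref{purityflats}\ref{purityflats1}) and that definable functors preserve pure exact sequences. Invoking \cref{uniprop} directly is, however, the cleanest route.
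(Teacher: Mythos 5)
Your proposal is correct and follows essentially the same route as the paper: the paper also observes that $\y\circ F$ is coherent (citing \cref{definablepreserves}, which is proved much as you argue) and then invokes the uniqueness clause of \cref{uniprop}. Your direct verification that the composite of $\y$ with a definable functor is coherent is just an unpacking of that citation, so the two arguments coincide in substance.
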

\begin{proof}
The composite $\y \circ F\colon \T \to \Flat{\U^\c}$ is coherent by~\cref{definablepreserves}. Therefore the claim follows from \cref{uniprop}.
\end{proof}
Despite the fact that the definable lift $\hat{F}\colon \Flat{\T^{\c}}\to\Flat{\U^{\c}}$ only goes between flat functors, we may, as in the case of coherent functors, extend it uniquely to a definable functor between the entire module categories.

\begin{lem}\label{leftexactextension}
	If $F\colon \T\to\U$ is definable, then there is a unique left exact definable functor $\bar{F}\colon\Mod{\T^{\c}}\to\Mod{\U^{\c}}$ extending $\hat{F}$. 
\end{lem}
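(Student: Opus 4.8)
The plan is to deduce the statement from \cref{moduleextension}, applied to a suitable coherent functor, together with the uniqueness clause of \cref{uniprop}.

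First I would check that \cref{moduleextension} is applicable with target $\scr{A} = \Mod{\U^{\c}}$. The category $\Mod{\U^{\c}}$ is locally coherent (cf. \cref{universalpropertyofyoneda}), hence locally finitely presented and so finitely accessible, and being a category of additive functors into $\ab$ it has all products and kernels. Next I would verify that the composite $H := \y \circ F\colon \T \to \Mod{\U^{\c}}$ is coherent in the sense of \cref{acoherent}: this follows by composing preservation properties, since $F$ preserves coproducts, products and pure triangles by \cref{definablepreserves}, while $\y\colon \U \to \Mod{\U^{\c}}$ preserves coproducts and products and sends pure triangles to pure exact sequences (cf. \cref{universalpropertyofyoneda} and \cref{definitionpureexact}). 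Then \cref{moduleextension} produces a unique left exact definable functor $\bar{F}\colon \Mod{\T^{\c}} \to \Mod{\U^{\c}}$ with $\bar{F} \circ \y = H = \y \circ F$.

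The remaining point, and the only one requiring a small argument, is that $\bar{F}$ genuinely extends $\hat{F}$, i.e. that $\bar{F}$ restricted to $\Flat{\T^{\c}}$ agrees with $\hat{F}$ (composed with the inclusion $\iota\colon \Flat{\U^{\c}} \hookrightarrow \Mod{\U^{\c}}$, which is definable as $\Flat{\U^{\c}}$ is a definable subcategory of $\Mod{\U^{\c}}$). Since $\Flat{\T^{\c}}$ is a definable subcategory of $\Mod{\T^{\c}}$, the restriction $\bar{F}\vert_{\Flat{\T^{\c}}}$ is definable, and both $\bar{F}\vert_{\Flat{\T^{\c}}}$ and $\iota \circ \hat{F}$ are definable functors $\Flat{\T^{\c}} \to \Mod{\U^{\c}}$ satisfying $\bar{F}\y = \y F = \iota\hat{F}\y$, so both are definable lifts of the coherent functor $H$ through $\y$. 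By the uniqueness clause of \cref{uniprop} they coincide, whence $\bar{F}\vert_{\Flat{\T^{\c}}} = \iota\circ\hat{F}$; in particular $\bar{F}$ maps $\Flat{\T^{\c}}$ into $\Flat{\U^{\c}}$ and extends $\hat{F}$.

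Finally, for the uniqueness of $\bar{F}$: if $\bar{F}'\colon \Mod{\T^{\c}} \to \Mod{\U^{\c}}$ is another left exact definable functor restricting to $\hat{F}$ on $\Flat{\T^{\c}}$, then $\bar{F}' \circ \y = \hat{F} \circ \y = \y \circ F = H$, so $\bar{F}'$ is a left exact definable extension of $H$ along $\y$ and hence equals $\bar{F}$ by the uniqueness already provided by \cref{moduleextension}. I do not anticipate a genuine obstacle; the work is essentially bookkeeping to feed the correct inputs into \cref{moduleextension} and \cref{uniprop}, the only mild subtleties being the verification that $\Mod{\U^{\c}}$ meets the hypotheses of \cref{moduleextension} and that $H = \y \circ F$ is coherent.
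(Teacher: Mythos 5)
Your proof is correct, and it takes a mildly but genuinely different route from the paper. The paper's own proof is a one-line appeal to \cite[Corollary 10.5]{Krauselfp}: since $\msf{fpInj}(\Mod{\T^\c}) = \Flat{\T^\c}$ and likewise for $\U$, both module categories are identified with $\mbb{D}$ of their categories of flats, and the corollary applied directly to the definable functor $\hat{F}\colon\Flat{\T^\c}\to\Flat{\U^\c}$ produces the unique left exact definable functor $\Mod{\T^\c}\to\Mod{\U^\c}$, which extends $\hat{F}$ by construction. You instead use the identification only on the source side, treating $\Mod{\U^\c}$ as a generic finitely accessible category with products and kernels, and feed the coherent functor $\y\circ F$ into \cref{moduleextension}; the price is that ``extends $\hat{F}$'' is no longer automatic, and you correctly recover it from the uniqueness clause of \cref{uniprop} (both $\bar{F}\vert_{\Flat{\T^\c}}$ and $\iota\circ\hat{F}$ are definable lifts of $\y\circ F$ along $\y$, hence coincide, which in particular shows $\bar{F}$ carries $\Flat{\T^\c}$ into $\Flat{\U^\c}$). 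Your supporting checks --- that $\Mod{\U^\c}$ meets the hypotheses of \cref{moduleextension}, that $\y\circ F$ is coherent by combining \cref{definablepreserves} with the coherence of $\y$, and that uniqueness of $\bar{F}$ reduces to the uniqueness already in \cref{moduleextension} --- are all sound. What your route buys is that it reuses a theorem already established in the paper rather than invoking the external universal property a second time; what the paper's route buys is brevity, since the extension property comes for free from the functoriality of $\mbb{D}$. Both arguments ultimately rest on the same result of Krause.
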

\begin{proof}
As $\msf{fpInj}(\Mod{\T^\c}) = \Flat{\T^\c}$ and similarly for $\U$ by \cref{flatequivalences}, there exists a unique left exact definable functor $\bar{F}\colon \Mod{\T^\c} \to \Mod{\U^\c}$ extending $F$ by \cite[Corollary 10.5]{Krauselfp}. 
\end{proof}

In fact, it is not just the case that $\bar{F}$ preserves kernels - it preserves all limits. By \cite[Corollary 10.5]{Krauselfp}, it has a left adjoint, which has pleasant properties.

\begin{prop}\label{barFhasleftadjoint}
	Let $F\colon \T \to \U$ be a definable functor. Then $\bar{F}\colon \Mod{\T^\c} \to \Mod{\U^\c}$ admits a left adjoint $\Lambda\colon \Mod{\U^\c} \to \Mod{\T^\c}$ which is exact and preserves finitely presented modules. \qed
\end{prop}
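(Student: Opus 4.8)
The statement is essentially a repackaging of the machinery already set up in \cref{extodef} and \cref{leftexactextension}, so the plan is to identify $\bar{F}$ with one of the functors produced by Krause's construction and then read off the desired properties from there. The key observation is that $\bar{F}\colon \Mod{\T^\c}\to\Mod{\U^\c}$ is the unique left exact, direct limit preserving functor extending $\hat{F}\colon\Flat{\T^\c}\to\Flat{\U^\c}$, and that under the identifications $\Mod{\T^\c} = \mathbb{D}(\Flat{\T^\c})$ and $\Mod{\U^\c} = \mathbb{D}(\Flat{\U^\c})$ of \cref{appendix.D} (using $\msf{fpInj}(\Mod{\T^\c}) = \Flat{\T^\c}$ from \cref{flatequivalences}), this functor is precisely $\hat{F}^*$ in the notation of \cref{extodef}. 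Indeed, \cite[Universal Property 10.4, Corollary 10.5]{Krauselfp} characterises $\hat{F}^*$ as the unique direct limit preserving functor with $\hat{F}^* \msf{d}_{\Flat{\T^\c}} = \msf{d}_{\Flat{\U^\c}} \hat{F}$, and since $\hat{F}$ is definable, \emph{loc. cit.} also guarantees that $\hat{F}^*$ is left exact; uniqueness of the left exact definable extension then forces $\bar{F} = \hat{F}^*$.

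Once this identification is made, the existence of the left adjoint $\Lambda$ is immediate: \cite[Universal Property 10.4, Corollary 10.5]{Krauselfp} states that because $\hat{F}$ is definable, the functor $\hat{F}^* = \bar{F}$ has a left adjoint $\hat{F}_! = \Lambda$ which is exact and preserves finitely presented objects. This is exactly the same mechanism that was invoked in \cref{extodef} to produce $G_!$ from a definable $G$, and in \cref{moduleextensionleftadjoint} to produce the left adjoint to $\bar{H}$. So the proof is essentially: apply \cref{leftexactextension} to get $\bar{F}$, observe it coincides with the functor furnished by \cite[Corollary 10.5]{Krauselfp} (which is the content one needs to spell out), and quote that corollary for the properties of $\Lambda$.

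The only mild subtlety — and the place where a careless proof could go wrong — is making sure the hypotheses of \cite[Universal Property 10.4]{Krauselfp} are genuinely met, in particular that $\Mod{\T^\c}$ and $\Mod{\U^\c}$ are the correct locally coherent categories associated to the finitely accessible categories $\Flat{\T^\c}$ and $\Flat{\U^\c}$, i.e. that $\mathbb{D}(\Flat{\T^\c}) = \Mod{\T^\c}$. This was already established inside the proof of \cref{moduleextension}, so one can simply cite that computation. Since the statement in the excerpt is marked \qed (no proof given in the source), I would keep the argument to a couple of sentences: note $\bar{F}$ is the extension from \cref{leftexactextension}, identify it via \cref{appendix.D} and \cref{flatequivalences} with the functor $\hat{F}^*$ of \cite[Corollary 10.5]{Krauselfp}, and conclude that its left adjoint is exact and preserves finitely presented modules by the same corollary, exactly as in \cref{moduleextensionleftadjoint}. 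The main ``obstacle'' is purely bookkeeping: checking that the abstract construction of \cref{extodef} applied to the definable functor $\hat{F}$ returns the $\bar{F}$ of \cref{leftexactextension} and not some other extension, which follows from the uniqueness clause in both statements.
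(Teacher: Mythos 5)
Your proposal is correct and follows essentially the same route as the paper, which treats this proposition as an immediate consequence of \cite[Corollary 10.5]{Krauselfp} applied to the definable functor $\hat{F}$, exactly as in the construction of \cref{extodef} and the coherent analogue \cref{moduleextensionleftadjoint}. The identification $\mathbb{D}(\Flat{\T^\c}) = \Mod{\T^\c}$ that you flag as the only subtlety is indeed the one the paper establishes in the proofs of \cref{moduleextension} and \cref{leftexactextension}, so nothing is missing.
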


Consequently, given a definable functor $F\colon\T\to\U$, we obtain unique definable functors $\hat{F}\colon\Flat{\T^{\c}}\to\Flat{\U^{\c}}$ and $\bar{F}\colon\Mod{\T^{\c}}\to\Mod{\U^{\c}}$, such that $\hat{F}\y=\y F$ and $\bar{F}$ agrees with $\hat{F}$ on flat functors:
\[
\begin{tikzcd}[column sep=1in, row sep=1cm]
\T \arrow[r, "F"] \arrow[d,swap, "\y"] & \U \arrow[d, "\y"] \\
\Flat{\T^{\c}} \arrow[r, "\exists!\hat{F}"] \arrow[d,hookrightarrow]& \Flat{\U^{\c}} \arrow[d,hookrightarrow] \\
\Mod{\T^{\c}} \arrow[r, shift right = 1ex,swap, "\exists!\bar{F}"] \arrow[r, leftarrow, shift left=1ex, "\Lambda"] & \Mod{\U^{\c}}
\end{tikzcd}
\]

Since $\Lambda$ preserves finitely presented objects, if $\mc{S}$ is a Serre subcategory of $\mod{\T^{\c}}$, then the preimage $\Lambda^{-1}\mc{S}\cap\mod{\U^{\c}}$ is also a Serre subcategory, which therefore corresponds to a definable subcategory of $\U$. We now show how these definable subcategories obtained through preimages relate to those given by the images of definable subcategories of $\T$ under $F$, as detailed in \cref{preimageandimage}\ref{psoimage}.
\begin{cor}\label{imageofpureF}
	Let $F\colon \T \to \U$ be a definable functor, and $\mc{D}$ be a definable subcategory of $\T$. Then \[\msf{pure}(F\mc{D}) = \scr{D}(\Lambda^{-1}\scr{S}(\mc{D}) \cap \mod{\U^\c}).\]
\end{cor}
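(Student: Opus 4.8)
The plan is to route the claimed identity through the fundamental correspondence of \cref{fundamentalcorrespondence} for $\U$: both sides are definable subcategories of $\U$, so it suffices to check that they have the same image under $\scr{S}$, i.e.\ to identify the Serre subcategory $\scr{S}(\msf{pure}(F\mc{D}))$ of $\mod{\U^\c}$ with $\Lambda^{-1}\scr{S}(\mc{D})\cap\mod{\U^\c}$. That identification is then a short computation with the adjunction $\Lambda\dashv\bar{F}$ of \cref{barFhasleftadjoint}.

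First I would record that both sides are definable subcategories of $\U$. The left-hand side $\msf{pure}(F\mc{D})$ is definable by \cref{preimageandimage}\ref{psoimage}, and it is the \emph{smallest} definable subcategory of $\U$ containing $F\mc{D}$: it contains $F\mc{D}$, while any definable subcategory containing $F\mc{D}$ is closed under pure subobjects and hence contains $\msf{pure}(F\mc{D})$. The right-hand side is $\scr{D}$ of the Serre subcategory $\Lambda^{-1}\scr{S}(\mc{D})\cap\mod{\U^\c}$ (this is Serre because $\scr{S}(\mc{D})$ is, and $\Lambda$ is exact and preserves finitely presented modules by \cref{barFhasleftadjoint}), hence definable by \cref{fundamentalcorrespondence}. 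Since $\scr{S}$ and $\scr{D}$ are mutually inverse bijections between the definable subcategories of $\U$ and the Serre subcategories of $\mod{\U^\c}$, the corollary is equivalent to
\[
\scr{S}(\msf{pure}(F\mc{D})) = \Lambda^{-1}\scr{S}(\mc{D})\cap\mod{\U^\c}.
\]

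To prove this I would compare both sides membership-wise for $g\in\mod{\U^\c}$. For the left-hand side, the functor $g^\vee=\Hom(g,\y(-))\colon\U\to\ab$ is coherent, so its vanishing locus $\{Y\in\U:g^\vee Y=0\}$ is a definable subcategory of $\U$; it therefore contains $\msf{pure}(F\mc{D})$ if and only if it contains $F\mc{D}$, by the minimality noted above. Hence $g\in\scr{S}(\msf{pure}(F\mc{D}))$ if and only if $\Hom(g,\y FX)=0$ for all $X\in\mc{D}$. For the right-hand side, $g$ lies in $\Lambda^{-1}\scr{S}(\mc{D})\cap\mod{\U^\c}$ exactly when $\Lambda g\in\scr{S}(\mc{D})$ (note $\Lambda g\in\mod{\T^\c}$ since $\Lambda$ preserves finitely presented modules), that is, when $\Hom(\Lambda g,\y X)=0$ for all $X\in\mc{D}$; by the adjunction $\Lambda\dashv\bar{F}$ and the identity $\bar{F}\y=\y F$, this says $\Hom(g,\y FX)=0$ for all $X\in\mc{D}$. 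The two conditions coincide, which gives the displayed equality, and hence the corollary.

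The argument is a sequence of unwindings with no serious obstacle; the step deserving the most care is the reduction on the left-hand side, where one replaces ``$g^\vee$ vanishes on all of $\msf{pure}(F\mc{D})$'' by ``$g^\vee$ vanishes on $F\mc{D}$'', which rests on the facts that the vanishing locus of the single coherent functor $g^\vee$ is definable and that $\msf{pure}(F\mc{D})$ is the definable subcategory generated by $F\mc{D}$. Once this is in place, the adjunction computation identifying both sides with $\{g\in\mod{\U^\c}:\Hom(g,\y FX)=0\text{ for all }X\in\mc{D}\}$ is immediate.
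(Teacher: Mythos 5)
Your proposal is correct and follows essentially the same route as the paper: reduce via the fundamental correspondence to the identity $\scr{S}(\msf{pure}(F\mc{D})) = \Lambda^{-1}\scr{S}(\mc{D})\cap\mod{\U^\c}$, then verify it elementwise using the adjunction $\Lambda\dashv\bar{F}$ and $\bar{F}\y=\y F$. The only difference is cosmetic: you spell out the minimality of $\msf{pure}(F\mc{D})$ as $\msf{Def}(F\mc{D})$ explicitly, where the paper simply invokes coherence of $\Hom(f,\y(-))$ to pass from vanishing on $F\mc{D}$ to vanishing on $\msf{pure}(F\mc{D})$.
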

\begin{proof}
	Since $\Lambda$ preserves finitely presented modules by 
	\cref{barFhasleftadjoint}, $\Lambda^{-1}\scr{S}(\mc{D}) \cap \mod{\U^\c}$ is a Serre subcategory of $\mod{\U^\c}$. Therefore by the fundamental correspondence, the claim is equivalent to the statement that $\scr{S}(\msf{pure}(F\mc{D})) = \Lambda^{-1}\scr{S}(\mc{D}) \cap \mod{\U^\c}$. Let $f \in \mod{\U^\c}$. Then $f \in \Lambda^{-1}\scr{S}(\mc{D})$ if and only if $\Hom(\Lambda f, \y X) = 0$ for all $X \in \mc{D}$. By adjunction, this is equivalent to $\Hom(f, \y FX) = \Hom(f, \bar{F}\y X)=0$ for all $X \in D$. Since $\Hom(f,\y(-))\colon \U \to \ab$ is a coherent functor, this is moreover equivalent to $\Hom(f, \y Z) = 0$ for all $Z \in \msf{pure}(F\mc{D})$ as required.
\end{proof}

We have already seen in \cref{barFhasleftadjoint} that $\bar{F}$ always has a left adjoint, and we end this subsection by giving a condition under which it also has a right adjoint.

\begin{prop}\label{barFhasrightadjoint}
	Let $F\colon \T \to \U$ be a definable functor, and suppose that $F$ has a left adjoint $L$. Then there is an adjoint triple $\Lambda \dashv \bar{F} \dashv \rho$. In particular, $\bar{F}$ is exact.
\end{prop}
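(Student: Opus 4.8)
The plan is to identify $\bar{F}$ explicitly with a restriction-of-scalars functor, from which both the exactness of $\bar F$ and the existence of the right adjoint $\rho$ drop out for free. First I would show that the left adjoint $L\colon\U\to\T$ preserves compact objects. Since $F$ is definable it preserves coproducts by \cref{definablepreserves}\ref{coproducts}, so for $C\in\U^\c$ and any family $\{Y_i\}$ in $\T$ the adjunction $L\dashv F$ gives $\Hom_\T(LC,\bigoplus_i Y_i)\cong\Hom_\U(C,F(\bigoplus_i Y_i))\cong\Hom_\U(C,\bigoplus_i FY_i)\cong\bigoplus_i\Hom_\U(C,FY_i)\cong\bigoplus_i\Hom_\T(LC,Y_i)$, which is exactly the adjunction argument recalled in \cref{triangulatedbackground}. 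Hence $L$ restricts to an additive functor $L^\c\colon\U^\c\to\T^\c$.

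Next I would consider the restriction functor $R:=\res_{L^\c}\colon\Mod{\T^\c}\to\Mod{\U^\c}$, $M\mapsto M\circ(L^\c)^\op$. Since kernels, cokernels, products and filtered colimits in these functor categories are all computed pointwise, $R$ is exact and preserves products and direct limits; in particular it is a left exact definable functor, and it admits a left adjoint together with a right adjoint $\rho$, namely the additive left and right Kan extensions along $(L^\c)^\op$ (which exist because $\ab$ is bicomplete). Evaluating $R$ on representables, for $X\in\T$ and $A\in\U^\c$ one has $(R\y X)(A)=\Hom_\T(L^\c A,X)=\Hom_\T(LA,X)\cong\Hom_\U(A,FX)$ naturally in $X$, by $L\dashv F$; that is, $R\circ\y=\y\circ F$ as functors $\T\to\Mod{\U^\c}$. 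In particular $R$ sends each representable $\y X$ with $X\in\T^\c$ into $\Flat{\U^\c}$, and since $R$ preserves direct limits and every flat functor is a filtered colimit of representables (\cref{flatequivalences}), $R$ restricts to a definable functor $\Flat{\T^\c}\to\Flat{\U^\c}$ still satisfying $R\circ\y=\y\circ F$.

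It then follows from the uniqueness of the definable lift (\cref{uniquenesshat}) that this restriction equals $\hat F$, and hence, since $R$ is in addition left exact and definable, the uniqueness statement in \cref{leftexactextension} forces $R=\bar F$. Therefore $\bar F$ is exact and carries the right adjoint $\rho$ produced above; its left adjoint then coincides with the functor $\Lambda$ of \cref{barFhasleftadjoint} by uniqueness of adjoints, which yields the adjoint triple $\Lambda\dashv\bar F\dashv\rho$ and the exactness of $\bar F$. The step requiring the most care is this identification $\bar{F}=\res_{L^\c}$: the substance lies in checking that $L$ preserves compacts and then threading the resulting restriction functor through the two uniqueness statements in the correct order, after which exactness and the right adjoint are immediate from the fact that restriction of scalars along $(L^\c)^\op$ is exact and has adjoints on both sides.
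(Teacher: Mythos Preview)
Your proof is correct and follows essentially the same approach as the paper: identify $\bar{F}$ with the restriction-of-scalars functor $-\circ L\vert_{\U^\c}$ after observing that $L$ preserves compacts, and then read off both adjoints and exactness. The only difference is that you spell out the identification in two steps (first $\hat{F}$ via \cref{uniquenesshat}, then $\bar{F}$ via \cref{leftexactextension}), whereas the paper appeals directly to the uniqueness in \cref{leftexactextension}; your version is slightly more careful but not materially different.
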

\begin{proof}
	Since $F$ preserves coproducts by \cref{definablepreserves}, its left adjoint $L$ preserves compact objects. As such, we may consider the functor $- \circ L\vert_{\U^\c}\colon \Mod{\T^\c} \to \Mod{\U^\c}$, and note that this has both a left and right adjoint. Therefore, $- \circ L\vert_{\U^\c}$ is a definable functor, and moreover commutes with the restricted Yoneda embeddings. As such, we have $\bar{F} = - \circ L\vert_{\U^\c}$ by the uniqueness of $\bar{F}$ as in \cref{leftexactextension}. In particular, $\bar{F}$ has both a left and a right adjoint given by the left and right Kan extension. (Note that the left adjoint may be identified with $\Lambda$ in the sense of \cref{barFhasleftadjoint} by uniqueness of adjoints.)
\end{proof}


\subsection{Criteria for definability}
In this section, we give various criteria for determining whether or not a given functor is definable. These are intrinsic characterisations, which can be checked on the triangulated level. Firstly, we require the following lemma.
\begin{lem}\label{enhancedpreservation}
Let $F\colon \T \to \U$ be a functor between compactly generated triangulated categories. Then the following are equivalent:
\begin{enumerate}[label=(\arabic*)]
\item\label{hocolim} $F$ preserves products and filtered homology colimits;
\item\label{coprodandpure} $F$ preserves coproducts, products, and pure triangles.
\end{enumerate}
\end{lem}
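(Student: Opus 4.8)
The plan is to reduce both implications to \cref{enhanceddirectlimits} by composing $F$ with the restricted Yoneda embedding $\y_\U\colon\U\to\Flat{\U^\c}$. Two features of $\y_\U$ will be used repeatedly: by the very definition of homology colimits (\cref{homologycolimits}) it sends filtered homology colimits to direct limits, and by \cref{definitionpureexact} a candidate triangle in $\U$ is isomorphic to a pure triangle if and only if $\y_\U$ sends it to a pure exact sequence in $\Flat{\U^\c}$; we also use that $\y_\U$ is conservative, which follows by completing a morphism to a triangle, observing that $\y_\U$ commutes with $\Sigma$ up to reindexing the compact test objects, and using that $\U$ is compactly generated.

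For \ref{hocolim}$\Rightarrow$\ref{coprodandpure}, assume $F$ preserves products and filtered homology colimits. Then $\y_\U\circ F\colon\T\to\Flat{\U^\c}$ preserves products, and for any filtered system $(X_i)_{i\in I}$ it sends $\msf{homcolim}_I X_i$ to $\y_\U(\msf{homcolim}_I FX_i)\cong\rlim_I\y_\U FX_i$; hence $\y_\U\circ F$ is coherent by \cref{enhanceddirectlimits}, and in particular preserves coproducts and sends pure triangles to pure exact sequences. Since $\y_\U$ preserves coproducts and is conservative, the argument of \cref{definablepreserves}\ref{coproducts} shows $F$ preserves coproducts. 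For pure triangles, if $X\xrightarrow{\alpha}Y\to Z\to\Sigma X$ is a pure triangle in $\T$, then $\y_\U(F\alpha)$ is a monomorphism, so $F\alpha$ is a pure monomorphism; completing $F\alpha$ to the pure triangle $FX\xrightarrow{F\alpha}FY\to C$ and running the (TR3) argument in the proof of \cref{definablepreserves}\ref{pure} verbatim (its only input is that $\y_\U(F\alpha)$ is a monomorphism) identifies the candidate triangle $FX\to FY\to FZ$ with a pure triangle. This establishes \ref{coprodandpure}.

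For \ref{coprodandpure}$\Rightarrow$\ref{hocolim}, assume $F$ preserves coproducts, products and pure triangles. Then $\y_\U\circ F$ preserves coproducts and products, and it sends a pure triangle $T$ in $\T$ to $\y_\U(FT)$, which is a pure exact sequence in $\Flat{\U^\c}$ because $FT$ is isomorphic to a pure triangle in $\U$; hence $\y_\U\circ F$ is coherent. By \cref{enhanceddirectlimits} it therefore sends filtered homology colimits to direct limits, so for any filtered system $(X_i)_{i\in I}$ with colimit cone $(c_i)$ the natural map $\rlim_I\y_\U FX_i\to\y_\U F(\msf{homcolim}_I X_i)$ induced by the $\y_\U(Fc_i)$ is an isomorphism. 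This is precisely the assertion that $(F(\msf{homcolim}_I X_i),Fc_i)$ is a homology colimit of $(FX_i)_{i\in I}$, so $F$ preserves filtered homology colimits; together with the preservation of products this yields \ref{hocolim}.

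The only step that is not a formal manipulation is the passage, in the first implication, from ``$\y_\U\circ F$ sends pure triangles to pure exact sequences'' back to ``$F$ sends pure triangles to pure triangles'': this is where the triangulated structure of $\U$ genuinely enters, via (TR3), exactly as in \cref{definablepreserves}\ref{pure}. Everything else is bookkeeping with the universal property of homology colimits and the conservativity of $\y_\U$.
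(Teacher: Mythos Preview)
Your proof is correct and follows exactly the paper's approach: apply \cref{enhanceddirectlimits} to $\y_\U\circ F$ and use conservativity of $\y_\U$. One small inaccuracy: your parenthetical ``its only input is that $\y_\U(F\alpha)$ is a monomorphism'' understates what the (TR3) argument in \cref{definablepreserves}\ref{pure} actually needs---to conclude that the comparison map $\delta$ is an isomorphism one uses that \emph{both} $\y_\U\gamma$ and $\y_\U(F\beta)$ are cokernels of $\y_\U(F\alpha)$, so one really needs the full pure exact sequence $0\to\y_\U FX\to\y_\U FY\to\y_\U FZ\to 0$, not just the monomorphism; you have this from coherence of $\y_\U\circ F$, so the argument goes through, but the parenthetical should be amended.
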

\begin{proof}
Apply \cref{enhanceddirectlimits} to $\msf{y} \circ F$ and use conservativity of the restricted Yoneda embedding.
\end{proof}

Our strongest criterion for checking definability is the following, which gives some equivalent characterisations of definable functors.
\begin{thm}\label{enhancement}
Let $F\colon \T \to \U$ be a functor between compactly generated triangulated categories. Then the following are equivalent:
\begin{enumerate}[label=(\arabic*)]
\item\label{TFAE1} $F$ is definable;
\item\label{TFAE2} $F$ preserves products and filtered homology colimits;
\item\label{TFAE3} $F$ preserves coproducts, products, and pure triangles.
\end{enumerate}
Moreover, if $F$ arises from a functor of stable $\infty$-categories then these are also equivalent to:
\begin{enumerate}[label=(\arabic*)]
	\setcounter{enumi}{3}
\item\label{TFAE4} $F$ preserves products and filtered homotopy colimits.
\end{enumerate}
\end{thm}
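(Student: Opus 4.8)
The plan is to prove the equivalence \ref{TFAE1}$\,\Leftrightarrow\,$\ref{TFAE3} directly, to read off \ref{TFAE2}$\,\Leftrightarrow\,$\ref{TFAE3} from \cref{enhancedpreservation}, and then to treat the enhanced condition \ref{TFAE4} separately.

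The implication \ref{TFAE1}$\,\Rightarrow\,$\ref{TFAE3} is just a restatement of \cref{definablepreserves}, which records that a definable functor preserves coproducts, products and pure triangles. For the converse \ref{TFAE3}$\,\Rightarrow\,$\ref{TFAE1}, the idea is to pass to the composite $\msf{y}\circ F\colon\T\to\Flat{\U^{\c}}$. Since $\Flat{\U^{\c}}$ is a finitely accessible category with products, it is enough to observe that $\msf{y}\circ F$ is a coherent functor in the sense of \cref{acoherent}: it preserves coproducts and products because $F$ does by hypothesis and $\msf{y}$ does, and it carries pure triangles to pure exact sequences because $F$ sends pure triangles to pure triangles and $\msf{y}$ sends pure triangles to pure exact sequences by \cref{definitionpureexact}. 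Applying \cref{uniprop} with $\scr{A}=\Flat{\U^{\c}}$ then yields a definable functor $\hat{F}\colon\Flat{\T^{\c}}\to\Flat{\U^{\c}}$ with $\hat{F}\circ\msf{y}=\msf{y}\circ F$, which is exactly the assertion that $F$ is definable.

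For the final clause, suppose $F$ arises from an $\infty$-functor $\scr{F}\colon\scr{C}\to\scr{D}$, where $\scr{C}$ and $\scr{D}$ are compactly generated stable enhancements of $\T$ and $\U$ (their presentability being recorded in \cref{enhancements}), so that in particular $\scr{C}\simeq\mathrm{Ind}(\scr{C}^{\omega})$ and likewise for $\scr{D}$. The key point, which I would isolate as a lemma, is that the homotopy category functor $\scr{C}\to\T$ carries filtered colimits in $\scr{C}$ to filtered homology colimits in $\T$: if $(X_i)$ is a filtered diagram in $\scr{C}$ with colimit $X$, then for any $A\in\T^{\c}$ — which is a compact object of $\scr{C}$ — one has $\Map_{\scr{C}}(A,X)\simeq\msf{colim}\,\Map_{\scr{C}}(A,X_i)$, and applying $\pi_0$, which commutes with filtered colimits of spaces, gives $\msf{y}X\simeq\rlim\msf{y}X_i$ in $\Mod{\T^{\c}}$. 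Granting this, \ref{TFAE2}$\,\Rightarrow\,$\ref{TFAE4} is a diagram chase: since \ref{TFAE2} already includes preservation of products, it suffices to see that $\scr{F}$ preserves filtered colimits, and the canonical comparison map $\scr{F}(\msf{colim}\,X_i)\to\msf{colim}\,\scr{F}(X_i)$, pushed through the homotopy category and the restricted Yoneda embedding, becomes a map between two copies of $\rlim\msf{y}FX_i$ — the source identified by the lemma together with the hypothesis that $F$ preserves filtered homology colimits, the target by the lemma applied to $\scr{D}$ — so that, being the canonical one, it is an isomorphism; as both $\msf{y}$ and the homotopy category functor are conservative, $\scr{F}$ indeed preserves the filtered colimit. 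Conversely, for \ref{TFAE4}$\,\Rightarrow\,$\ref{TFAE2}, in view of the equivalence \ref{TFAE2}$\,\Leftrightarrow\,$\ref{TFAE3} it suffices to verify \ref{TFAE3}: preservation of products is part of \ref{TFAE4}; preservation of coproducts holds because $\scr{F}$, being a product-preserving functor of stable $\infty$-categories, preserves finite coproducts, while any coproduct in $\scr{C}$ is the filtered colimit of its finite subcoproducts — a genuine diagram in $\scr{C}$ — which $\scr{F}$ preserves; and preservation of pure triangles follows by realizing a pure triangle in $\T$ as a filtered homology colimit of split triangles whose underlying filtered diagram lifts to $\scr{C}$, using $\scr{C}\simeq\mathrm{Ind}(\scr{C}^{\omega})$ to present the cone of the triangle as a filtered colimit of compact objects of $\scr{C}$ and then rebuilding the triangle in the manner of the telescope construction, so that $\scr{F}$, which preserves finite coproducts and filtered colimits, carries it to a filtered homology colimit of split triangles, that is, to a pure triangle.

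The step I expect to be the main obstacle is this last realization of a pure triangle as a genuinely $\scr{C}$-indexed filtered colimit of split triangles: unlike a coproduct, a pure triangle does not carry an evident lift to the enhancement, so one must reconstruct it from a filtered diagram of compact objects of $\scr{C}$ — which does lift, since $\scr{C}\simeq\mathrm{Ind}(\scr{C}^{\omega})$ — and then check, after applying the restricted Yoneda embedding, that the reconstruction agrees with the original pure triangle and is respected by $\scr{F}$.
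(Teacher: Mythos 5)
Your proof of the equivalence of (1), (2) and (3) is exactly the paper's: (1)$\Rightarrow$(3) is \cref{definablepreserves}, (2)$\Leftrightarrow$(3) is \cref{enhancedpreservation}, and (3)$\Rightarrow$(1) observes that $\msf{y}\circ F$ is coherent and applies \cref{uniprop} with $\scr{A}=\Flat{\U^{\c}}$. For condition (4) the paper only says ``straightforward modification of the argument given in \cref{enhancedpreservation}'', and your expanded version --- the bridge lemma that the homotopy category functor sends filtered colimits to filtered homology colimits, plus the presentation of a pure triangle as a filtered colimit in the enhancement of split triangles obtained by writing the cone as a filtered colimit of compacts (on each of which the phantom connecting map vanishes) --- is a correct instantiation of that modification rather than a different route.
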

\begin{proof}
That \ref{TFAE1} implies \ref{TFAE3} is the content of \cref{definablepreserves}, and \ref{TFAE2} and \ref{TFAE3} are equivalent by \cref{enhancedpreservation}. We now show that \ref{TFAE3} implies \ref{TFAE1}. By the assumptions on $F$, the composite $\msf{y}\circ F\colon \T \to \Flat{\U^\c}$ is $\Flat{\U^\c}$-coherent. Therefore by \cref{uniprop} there exists a definable functor $\hat{F}\colon \Flat{\T^\c} \to \Flat{\U^\c}$ such that $\hat{F} \circ \msf{y} = \msf{y} \circ F$ as required. Finally, assume that $F$ arises from a functor of stable $\infty$-categories. Then \ref{TFAE3} and \ref{TFAE4} are equivalent by a straightforward modification of the argument given in \cref{enhancedpreservation}.
\end{proof}

In the definition of definable functor, we required the lift to preserve flat objects. We note that given a definable functor on the whole module categories which lifts $F$, this is immediate.
\begin{lem}\label{preservesflats}
Let $F\colon \T \to \U$ be a functor between compactly generated triangulated categories. If there exists a definable functor $\bar{F}\colon \Mod{\T^\c} \to \Mod{\U^\c}$ such that $\bar{F} \circ \msf{y} = \msf{y} \circ F$, then $\bar{F}$ preserves flats, and $F$ is definable.
\end{lem}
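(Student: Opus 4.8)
The plan is to show that $\bar{F}$ restricts to a definable functor $\Flat{\T^{\c}}\to\Flat{\U^{\c}}$, which is then automatically a definable lift of $F$ in the sense of \cref{definitionofdefinable}. Throughout I interpret ``$\bar{F}$ is definable'' as in \cref{definablelfp}, i.e.\ $\bar{F}$ preserves direct limits and products (this is legitimate since $\Mod{\T^{\c}}$ and $\Mod{\U^{\c}}$ are finitely accessible categories with products, being locally coherent).

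First I would prove that $\bar{F}$ preserves flats. By \cref{flatequivalences}, every object of $\Flat{\T^{\c}}$ is a filtered colimit of modules of the form $\y A$ with $A\in\T^{\c}$ (the finitely presented projective functors). Since $\bar{F}$ is definable it preserves such colimits, so for $G=\rlim_{i}\y A_{i}\in\Flat{\T^{\c}}$ we obtain $\bar{F}G\simeq\rlim_{i}\bar{F}\y A_{i}\simeq\rlim_{i}\y FA_{i}$, using $\bar{F}\circ\y=\y\circ F$. Each $\y FA_{i}$ lies in $\Flat{\U^{\c}}$, and $\Flat{\U^{\c}}$ is closed under filtered colimits in $\Mod{\U^{\c}}$ (again by \cref{flatequivalences}), so $\bar{F}G\in\Flat{\U^{\c}}$; thus $\bar{F}$ preserves flats. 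Alternatively, one can argue that $\bar{F}^{-1}(\Flat{\U^{\c}})$ is a definable subcategory of $\Mod{\T^{\c}}$ by \cite[Proposition 13.3]{dac} and that it contains the image of $\y$, hence contains $\Flat{\T^{\c}}$ since the latter is the smallest definable subcategory of $\Mod{\T^{\c}}$ with this property.

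It then remains to check that the corestriction $\hat{F}:=\bar{F}|_{\Flat{\T^{\c}}}\colon\Flat{\T^{\c}}\to\Flat{\U^{\c}}$ is definable, after which $\hat{F}\circ\y=\bar{F}\circ\y=\y\circ F$ exhibits it as a definable lift of $F$, so that $F$ is definable. This is routine: since $\Flat{\T^{\c}}$ and $\Flat{\U^{\c}}$ are definable subcategories of the respective module categories, they are closed under filtered colimits and products (see \cref{appendix.definable}), so these (co)limits are computed as in $\Mod{\T^{\c}}$ and $\Mod{\U^{\c}}$; as $\bar{F}$ preserves them in the ambient categories, $\hat{F}$ preserves them too, i.e.\ $\hat{F}$ is definable.

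There is no genuine obstacle here: the statement is a formal consequence of $\bar{F}$ being definable together with the description of flat modules as filtered colimits of representables. The only point requiring a moment's care is the bookkeeping that (co)limits in the definable subcategories $\Flat{\T^{\c}}\subseteq\Mod{\T^{\c}}$ and $\Flat{\U^{\c}}\subseteq\Mod{\U^{\c}}$ agree with those in the ambient categories, which is exactly the closure property of definable subcategories.
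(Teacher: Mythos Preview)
Your proof is correct and follows essentially the same approach as the paper: write any flat as a filtered colimit of representables $\y A_i$, use $\bar{F}\circ\y=\y\circ F$ and direct limit preservation to see that $\bar{F}$ preserves flats, and then note that the restriction $\hat{F}$ is definable because (co)limits in $\Flat{\T^{\c}}$ are computed in the ambient module category. Your alternative argument via $\bar{F}^{-1}(\Flat{\U^{\c}})$ being a definable subcategory containing the image of $\y$ is a nice variant, but the main line is the same as the paper's.
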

\begin{proof}
As products and direct limits in $\Flat{\T^\c}$ are computed in $\Mod{\T^\c}$, it suffices to check that $\bar{F}$ preserves flats. If $X\in\Flat{\T^{\c}}$, then $X\simeq\rlim_{I}\msf{y}A_{i}$, where each $A_{i}\in\T^{\c}$ by \cref{flatequivalences}. Then $\bar{F}(X)\simeq\rlim\bar{F}\msf{y}A_{i}\simeq\rlim\msf{y}FA_{i}$. Although each $FA_{i}$ need not be a compact in $\U$, each $\msf{y}FA_{i}$ is an object in $\Flat{\U^{\c}}$, which is closed under direct limits, hence $\bar{F}X\in\Flat{\U^{\c}}$ as required.
\end{proof}


\subsection{Triangulated functors}
In this section, we comment on the particularly pleasant case of triangulated definable functors. In this setting we may apply adjoint functor theorems from Brown representability to obtain simple characterisations of triangulated definable functors, together with various additional consequences. This means that many of the results in this subsection follow easily, but we include them for completeness.

Recall from the introduction, that a triangulated functor between compactly generated triangulated categories is \emph{Beligiannis definable} if and only if it preserves coproducts and products, see~\cite[Definition 6.11]{Beligiannis}. 

\begin{prop}\label{triangulatedversion}
Let $F\colon\T\to\U$ be a triangulated functor between compactly generated triangulated categories. Then $F$ is definable if and only if $F$ preserves products and coproducts. In other words, $F$ is definable if and only if it is Beligiannis definable. Moreover in this setting, the definable lift $\bar{F}\colon \Mod{\T^\c} \to \Mod{\U^\c}$ is exact.
\end{prop}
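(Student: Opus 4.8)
The plan is to prove the two implications of the equivalence separately and then deduce exactness of $\bar{F}$ from adjoint functor theorems. The forward implication is immediate: if $F$ is definable, then it preserves products and coproducts by \cref{definablepreserves}. Since Beligiannis definability of a triangulated functor means exactly the preservation of products and coproducts, this simultaneously gives one half of the comparison with Beligiannis's notion.

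For the converse, suppose $F$ is triangulated and preserves both products and coproducts. The crucial step is to analyse the composite $\y\circ F\colon\T\to\Mod{\U^{\c}}$. Since $F$ is triangulated and $\y$ carries triangles to long exact sequences (\cref{definitionpureexact}), the composite $\y\circ F$ is homological; it also preserves products and coproducts, as both $F$ and $\y$ do. As $\Mod{\U^{\c}}$ is a finitely accessible Grothendieck category, \cref{intoAB5} then shows that $\y\circ F$ is coherent. From this I would deduce that $F$ preserves pure triangles: given a pure triangle $X\to Y\to Z\xrightarrow{w}\Sigma X$, the connecting map $w$ is phantom, and since $F$ is triangulated the candidate triangle $FX\to FY\to FZ\xrightarrow{Fw}\Sigma FX$ is a genuine triangle whose image under $\y$ is short exact (by coherence of $\y\circ F$); hence it is a pure triangle by \cref{definitionpureexact}. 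Now $F$ preserves products, coproducts, and pure triangles, so $F$ is definable by \cref{enhancement}. (Alternatively, one can route this through \cref{universalpropertyofyoneda}: the colimit-preserving extension $\bar{F}$ of the homological, coproduct-preserving functor $\y\circ F$ satisfies $\y(Fw)=\bar{F}(\y w)=0$ whenever $w$ is phantom, so $F$ preserves phantoms, hence pure triangles.)

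For the final assertion, observe that since $F$ preserves products, Brown representability (\cref{triangulatedbackground}) furnishes a left adjoint to $F$; as $F$ is now known to be definable, \cref{barFhasrightadjoint} exhibits $\bar{F}$ as the middle term of an adjoint triple, so in particular $\bar{F}$ is exact. I do not expect a genuine obstacle here: the substance is already contained in \cref{intoAB5}, \cref{enhancement}, and \cref{barFhasrightadjoint}, so the proposition reduces to checking that their hypotheses are met. The only point warranting care is verifying that $\y\circ F$ satisfies the hypotheses of \cref{intoAB5}, in particular that it preserves products — which is exactly where the product-preservation assumption on the triangulated functor $F$ is used.
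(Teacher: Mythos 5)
Your proof is correct, but it takes a genuinely different route from the paper's. For the converse direction the paper does not verify the intrinsic criterion of \cref{enhancement} at all: it applies the universal property of \cref{universalpropertyofyoneda} to $\y_\U\circ F$ to produce an exact, coproduct-preserving $\bar{F}\colon\Mod{\T^\c}\to\Mod{\U^\c}$ directly, then uses Brown representability to get a compact-preserving left adjoint $L$ of $F$, identifies $\bar{F}=-\circ L\vert_{\U^\c}$ by uniqueness, deduces that $\bar{F}$ has a left adjoint and hence preserves products (so is definable), and finally invokes \cref{preservesflats} to conclude that $F$ is definable; exactness of $\bar{F}$ comes for free from the construction. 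You instead stay at the triangulated level: you use \cref{intoAB5} to see that the homological, (co)product-preserving functor $\y\circ F$ is coherent, deduce preservation of pure triangles (legitimately using that $F$ is triangulated so the image of a pure triangle is a genuine triangle, to which \cref{definitionpureexact} applies), and then apply \cref{enhancement}, with exactness outsourced to \cref{barFhasrightadjoint}. Both arguments are valid and there is no circularity in your citations, since \cref{intoAB5}, \cref{enhancement}, and \cref{barFhasrightadjoint} all precede this proposition and do not depend on it. What each buys: the paper's route yields the explicit identification $\bar{F}=-\circ L\vert_{\U^\c}$ as a by-product (which is then reused in \cref{exdefredux}), whereas your route is more modular and makes transparent exactly where each hypothesis (triangulatedness, product preservation, coproduct preservation) is consumed — though note that the proof of \cref{barFhasrightadjoint} itself performs the same identification, so the underlying mechanism for exactness is ultimately the same.
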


\begin{proof}
The forward implication is contained in \cref{definablepreserves}. For the converse, applying the universal property of $\y$, discussed in \cref{universalpropertyofyoneda}, to the composite $\msf{y}_\U \circ F\colon \T \to \Mod{\U^\c}$ we obtain an exact, coproduct preserving functor $\bar{F}\colon\Mod{\T^\c} \to \Mod{\U^\c}$ so that $\msf{y}_\U \circ F = \bar{F} \circ \msf{y}_\T$. We next show that it is a definable functor.

Since $F$ is product preserving and triangulated, it has a triangulated left adjoint $L\colon \U \to \T$ by \cref{triangulatedbackground}. As $F$ preserves coproducts, $L$ preserves compacts by \cref{triangulatedbackground}, and as such there is an exact, colimit preserving functor $-\circ L\vert_{\U^\c}\colon \Mod{\T^\c} \to \Mod{\U^\c}$, where $L\vert_{\U^\c}$ is the restriction of $L$ to $\U^\c$. Applying the uniqueness part of the universal property in \cref{universalpropertyofyoneda}, we see that $\bar{F} = - \circ L\vert_{\U^\c}$. Therefore $\bar{F}$ has a left adjoint given by the left Kan extension $(L\vert_{\U^\c})_!$, and as such $\bar{F}$ preserves products and is definable. Applying \cref{preservesflats} shows that $F$ is therefore definable.
\end{proof}

\begin{rem}
As an immediate corollary of \cref{preservespurestructure} and \cref{triangulatedversion}, one sees that a coproduct and product preserving triangulated functor also preserves pure triangles. This can also be seen more directly, by checking that $F$ preserves phantom maps, which can be done by using \cref{triangulatedbackground} to see that $F$ has a compact preserving left adjoint.
\end{rem}

\begin{rem}
The main application of Beligiannis definable functors was to obtain embeddings of the Ziegler spectra~\cite[Theorem 6.13]{Beligiannis}. We consider maps on the Ziegler spectrum in \cref{sec:mapsonziegler}, enabling us to generalise this result.
\end{rem}

We will now explain the relationship between definable lifts and the universal property of the restricted Yoneda embedding as recalled in \cref{universalpropertyofyoneda}. This may help the reader orient themselves in comparing the universal property of the restricted Yoneda embedding proved in \cref{sec:universalprop} with that of \cref{universalpropertyofyoneda}. 

\begin{thm}\label{exdefredux}
Let $F\colon \T \to \U$ be a triangulated, definable functor. Write $L$ (resp., $R$) for the left (resp., right) adjoints of $F$, which exist by \cref{triangulatedbackground}, as $F$ preserves coproducts and products (see \cref{triangulatedversion}).
\begin{enumerate}[label=(\arabic*)]
\item\label{triitem1} There exists an adjoint triple
\[\begin{tikzcd}
\Mod{\T^\c} \ar[rr, "\bar{F}" description] & & \Mod{\U^\c} \ar[ll, yshift=2mm, "\Lambda"'] \ar[ll, yshift=-2mm, "\rho"]
\end{tikzcd}\]
such that:
\begin{enumerate}[label=(\alph*)]
\item\label{triitema} $\bar{F}$ is an exact, definable functor which preserves flats, and satisfies $\msf{y}_\U\circ\bar{F} = \bar{F}\circ\msf{y}_\T$;
\item\label{triitemb} $\Lambda$ is exact, preserves finitely presented objects and satisfies $\msf{y}_\T \circ L = \Lambda \circ \msf{y}_\U$.
\end{enumerate}
\item\label{triitem2} Suppose moreover that $F$ preserves compacts. Then $\rho = \bar{R}$ and is exact, definable, preserves flats and satisfies $\msf{y}_\T \circ \rho = R \circ \msf{y}_\U$. Moreover, $\rho = \bar{R}$ has a right adjoint $\beta$. 
\item\label{triitem3} The functor $\Lambda$ has a left adjoint if and only if $L$ preserves products. In this case, we have $\Lambda = \bar{L}$.
\end{enumerate}
\end{thm}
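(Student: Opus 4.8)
The plan is to handle parts \ref{triitem1} and \ref{triitem2} by assembling results already in hand, and then to concentrate on part \ref{triitem3}, where the real content lies; throughout, the order of proof will be \ref{triitem1}, then \ref{triitem2}, then \ref{triitem3}, so that the Yoneda-compatibility statements of part \ref{triitem1} are available later.

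For part \ref{triitem1}: the functor $\bar{F}$ exists and is left exact and definable by \cref{leftexactextension}, is exact by \cref{triangulatedversion}, preserves flats by \cref{preservesflats}, and satisfies $\bar{F}\circ\msf{y}_\T=\msf{y}_\U\circ F$ by construction; its left adjoint $\Lambda$ exists and is exact and preserves finitely presented objects by \cref{barFhasleftadjoint}. The one point needing an argument is the identity $\msf{y}_\T\circ L=\Lambda\circ\msf{y}_\U$: since $F$ is triangulated and product preserving it has a left adjoint $L$ by \cref{triangulatedbackground}, and $\bar{F}=-\circ L\vert_{\U^\c}$ as in the proof of \cref{barFhasrightadjoint}, so $\Lambda$ is the left Kan extension along $(L\vert_{\U^\c})^\op$; evaluation on a representable $\msf{y}_\U B$ with $B\in\U^\c$ gives $\Lambda\msf{y}_\U B\simeq\msf{y}_\T LB$ by Yoneda, and this extends to all of $\U$ by writing objects as homology colimits of compacts, since $\Lambda$ and $\msf{y}_\T L$ both carry these to the corresponding direct limits. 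For part \ref{triitem2}, if $F$ preserves compacts then its right adjoint $R$ preserves coproducts by \cref{triangulatedbackground} and, being a triangulated right adjoint, also preserves products, so $R$ is definable by \cref{triangulatedversion}; one then recognises $\rho$ as the right Kan extension along $(L\vert_{\U^\c})^\op$, hence as restriction along its left adjoint $(F\vert_{\T^\c})^\op$, and this coincides with $\bar{R}=-\circ F\vert_{\T^\c}$ given by \cref{barFhasrightadjoint} applied to $R$ (whose left adjoint $F$ preserves compacts). The remaining assertions — exactness, preservation of flats, the Yoneda identity $\msf{y}_\T\circ\rho=R\circ\msf{y}_\U$, and the existence of the further right adjoint $\beta$ — are then immediate from \cref{triangulatedversion}, \cref{preservesflats}, and \cref{barFhasrightadjoint} applied to $R$.

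The heart of the theorem is part \ref{triitem3}, which I would prove in two directions. If $L$ preserves products then, being also triangulated and coproduct preserving, it is definable by \cref{triangulatedversion}, so $\bar{L}\colon\Mod{\U^\c}\to\Mod{\T^\c}$ exists, is exact, and admits a left adjoint by \cref{barFhasleftadjoint}; it then suffices to show $\Lambda=\bar{L}$. Both functors are cocontinuous — $\Lambda$ as a left adjoint, $\bar{L}$ as an exact coproduct preserving functor — and both satisfy $\Lambda\circ\msf{y}_\U=\msf{y}_\T\circ L=\bar{L}\circ\msf{y}_\U$ (the first equality by part \ref{triitem1}, the second being the defining property of $\bar{L}$), so they agree on the representable modules $\msf{y}_\U(\U^\c)$ and therefore everywhere. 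Conversely, if $\Lambda$ has a left adjoint then $\Lambda$ preserves all limits, in particular products; for a family $(Y_i)_{i\in I}$ in $\U$, applying $\msf{y}_\T$ to the comparison morphism $c\colon L(\prod_I Y_i)\to\prod_I LY_i$ and using that $\msf{y}_\U$, $\msf{y}_\T$ and $\Lambda$ all preserve products, together with $\Lambda\circ\msf{y}_\U\simeq\msf{y}_\T\circ L$, produces a chain of isomorphisms from $\msf{y}_\T L(\prod_I Y_i)$ to $\msf{y}_\T(\prod_I LY_i)$ which one checks is compatible with the projections, hence equals $\msf{y}_\T(c)$; since $\msf{y}_\T$ is conservative, $c$ is an isomorphism and $L$ preserves products. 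The ``in this case'' clause $\Lambda=\bar{L}$ is the equality already established in the first direction.

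The step I expect to be most delicate is the identification $\Lambda=\bar{L}$: one must be sure that two cocontinuous functors out of $\Mod{\U^\c}$ agreeing naturally on the representable modules must coincide, which rests on $\Mod{\U^\c}$ being the free cocompletion of its subcategory of representables. One can sidestep this by instead observing that, when $L$ preserves products, the adjunction $K\dashv L$ with $K\colon\T\to\U$ the left adjoint of $L$ restricts to $K\vert_{\T^\c}\dashv L\vert_{\U^\c}$ on compacts, so that the left Kan extension along $(L\vert_{\U^\c})^\op$ — which is $\Lambda$ — is simply restriction along $(K\vert_{\T^\c})^\op$, which is exactly $\bar{L}$ by \cref{barFhasrightadjoint} applied to $L$. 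Everything else reduces to routine manipulation of adjunctions, Brown representability, and the universal properties recalled in \cref{universalpropertyofyoneda} and \cref{sec:universalprop}.
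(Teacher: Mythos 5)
Your proposal is correct in substance and its overall architecture (prove (1), then (2), then (3)) matches the paper's, but two of your arguments take a genuinely different and arguably cleaner route. For part \ref{triitem2}, the paper applies part \ref{triitem1} to $R$ to get a triple $\alpha \dashv \bar{R} \dashv \beta$ and then identifies $\alpha$ with $\bar{F}$ by a $\Hom$-computation, using that both functors land in finitely presented objects on representables and invoking the antiequivalence $\mod{\T^\c}\simeq\msf{Coh}(\T,\ab)$; you instead observe that $L\vert_{\U^\c}\dashv F\vert_{\T^\c}$ and use the standard fact that a right Kan extension along a functor admitting a left adjoint is restriction along that left adjoint, giving $\rho = -\circ F\vert_{\T^\c} = \bar{R}$ directly. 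The same device gives your identification $\Lambda=\bar{L}$ in part \ref{triitem3}, where the paper instead deduces it from uniqueness of adjoints after running part \ref{triitem2} on $L$. Both routes are valid; yours trades the paper's reliance on the structure theory of finitely presented functors for elementary Kan-extension bookkeeping, and your converse direction in \ref{triitem3} (applying $\msf{y}_\T$ to the comparison map and using conservativity) is exactly the paper's argument.

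The one place you should tighten is the verification of $\Lambda\circ\msf{y}_\U=\msf{y}_\T\circ L$ on non-compact objects in part \ref{triitem1}\ref{triitemb}. You assert that $\msf{y}_\T\circ L$ carries filtered homology colimits to direct limits, but this is not automatic for a triangulated, coproduct-preserving functor: by \cref{enhanceddirectlimits} that preservation property is one half of being coherent, and $L$ need not preserve products, so you cannot simply cite coherence, and no other justification is offered. The statement is true, but the cleanest way to see it is to reverse the logic: apply the universal property of \cref{universalpropertyofyoneda} to the coproduct-preserving homological functor $\msf{y}_\T\circ L\colon\U\to\Mod{\T^\c}$ to obtain an exact, coproduct-preserving (hence cocontinuous) $G\colon\Mod{\U^\c}\to\Mod{\T^\c}$ with $G\circ\msf{y}_\U=\msf{y}_\T\circ L$; since $G$ and $\Lambda$ are both cocontinuous and agree on the representables $\msf{y}_\U(\U^\c)$, they coincide, which gives the desired identity on all of $\U$ (and the homology-colimit preservation you wanted falls out as a corollary rather than an input). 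With that repair the proposal is complete.
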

\begin{proof}
We first prove \cref{triitem1}. \cref{triitema} is contained in \cref{triangulatedversion}, and \cref{triitemb} follows from \cref{barFhasleftadjoint} together with the fact that it may be identified with the left Kan extension $(L\vert_{\U^\c})_!$ and hence commutes with the restricted Yoneda embeddings. Note that $\rho$ exists, as $\bar{F} = - \circ L\vert_{\U^\c}$, and hence has right adjoint $\rho$ given by the right Kan extension $(L\vert_{\U^\c})_*$.

We now prove \cref{triitem2}. Since $F$ is triangulated, definable, and compact-preserving, it has a right adjoint $R$ which is triangulated and definable by \cref{triangulatedbackground}. As such, applying \cref{triitem1} to $R$ we obtain an adjoint triple $\alpha \dashv \bar{R} \dashv \beta$. Therefore to prove \cref{triitem2}, it suffices to show that $\alpha = \bar{F}$. As both $\alpha$ and $\bar{F}$ are exact and preserve direct limits, it suffices to show that they agree on $\msf{y}T$ where $T\in \T^\c$. Then for any $U \in \U$, we have \[\Hom(\bar{F}\msf{y}T, \msf{y}U) = \Hom(\msf{y}FT, \msf{y}U) = \Hom(T, RU) = \Hom(\msf{y}T, \msf{y}RU) = \Hom(\msf{y}T, \bar{R}\msf{y}U) = \Hom(\alpha\msf{y}T, \msf{y}U)\] by adjunction and fully faithfulness of restricted Yoneda when the first variable is compact. Since $\alpha$ has a direct limit preserving right adjoint and $F$ preserves compacts, both $\bar{F}(\msf{y}T)$ and $\alpha(\msf{y}T)$ are finitely presented, and hence they are equal by~\cite[Theorem 7.2]{krcoh}, also see \cref{coherent}, as required.

For part \cref{triitem3}, if $L$ preserves products then $L$ is definable and compact-preserving so \cref{triitem2} shows that we obtain adjoints $\gamma \dashv \bar{L} \dashv \delta \dashv \eta$, and $\delta = \bar{F}$. Hence $\bar{L} = \Lambda$ by uniqueness of adjoints, so we see that $\Lambda$ has a left adjoint. Conversely, if $\Lambda$ preserves products, then \[\msf{y}L \prod X_i = \Lambda\msf{y}\prod X_i = \Lambda \prod \msf{y}X_i = \prod \Lambda \msf{y}X_i = \msf{y}\prod L X_i.\] One identifies this as the image under the restricted Yoneda embedding of the canonical map $L\prod X_i \to \prod LX_i$, and hence by the conservativity of $\y$ we see that $L$ preserves products.
\end{proof}


\section{Examples of definable functors}\label{sec:examples}

In this section we show how ubiquitous definable functors are. 

\begin{chunk}{\textbf{Deriving exact definable functors.}}
Let $F\colon\mathscr{A}\to\mathscr{B}$ be an exact functor of Grothendieck abelian categories that is definable. Then the induced functor $F\colon\D(\mathscr{A})\to \D(\mathscr{B})$ is a definable functor by \cref{triangulatedversion} as it is triangulated, and as products and coproducts are computed termwise. For instance, if $f\colon R\to S$ is a ring map, then the restriction of scalars functor $f^{*}\colon\D(S)\to\D(R)$ is definable.
\end{chunk}

\begin{chunk}{\textbf{Derived Hom and tensor.}}\label{tensorwithbounded}
Let $R$ be a ring. Then the functor 
\[
\msf{R}\Hom_{R}(C,-)\colon \D(R)\to\D(\Z)
\]
is definable if and only if $C$ is a perfect complex. Indeed, $\msf{R}\Hom_{R}(C,-)$ is triangulated and always preserves products, hence, by \cref{triangulatedversion}, we require it to preserve coproducts. This is equivalent, as $R$ generates $\D(R)$, to $C$ being compact. 

For the derived tensor product, if $X\in\D(R)$, then
\[
X\otimes_{R}^{\msf{L}}-\colon \D(R^{\op})\to\D(\Z)
\] 
is definable if and only if $X$ is compact. By definition, $X \otimes_R^\msf{L} - = \mc{F}_{X}\otimes_R -$ where $\mc{F}_{X}$ is a semi-flat replacement for $X$. Now $\mc{F}_X \otimes_R -\colon \C(R^{\op})\to\C(\ab)$ preserves products if and only if $\mc{F}_{X}$ is a bounded complex of finitely presented modules by \cite[Theorem C.9]{dcmca}. As semi-flat complexes are termwise flat, this is moreover equivalent to $X$ being compact.
\end{chunk}

\begin{chunk}{\textbf{Geometric functors in tensor-triangular geometry}.}\label{exgeometricmap}
Let $\T$ and $\U$ be rigidly-compactly generated tensor-triangulated categories. Recall that a functor $f^{*}\colon\T\to\U$ is called a \emph{geometric functor} if it is triangulated, preserves coproducts, and is strong monoidal. By \cref{triangulatedbackground}, any geometric functor $f^{*}$ admits a right adjoint
\[
f_{*}\colon\U\to\T.
\]
As $f^*$ is strong monoidal, it preserves compact objects since these coincide with the rigid objects, and hence the right adjoint $f_*$ also preserves coproducts. Therefore the functor $f_*$ is definable by \cref{triangulatedversion}; that is, the right adjoint to any geometric functor is definable.
This encompasses several examples of interest, for instance, the coinduction functor from $H$-spectra to $G$-spectra for $H$ a closed subgroup of a compact Lie group $G$, and the derived pushforward along a map of quasi-compact, quasi-separated schemes; see~\cite{BDS} for more examples and details. We consider the question of when $f^*$ is definable in the next example.
\end{chunk}

\begin{chunk}{\textbf{Grothendieck-Neeman duality.}}\label{GNduality}
Let $f^*\colon \T \to \U$ be a geometric functor between rigidly-compactly generated tensor-triangulated categories as in \cref{exgeometricmap}. Write $f^{(1)}$ for the right adjoint to $f_*$ (which exists since $f_*$ preserves coproducts). Then the following are equivalent:
\begin{enumerate}
	\item $f^*\colon \T \to \U$ is a definable functor; 
	\item $f^{(1)}\colon \T \to \U$ is a definable functor;
	\item Grothendieck-Neeman duality holds for $f^*$, i.e., there is a natural isomorphism $\omega_f \otimes f^*(-) \simeq f^{(1)}(-)$ where $\omega_f = f^{(1)}(\1_\U)$ is the relative dualising object. 
\end{enumerate}
The equivalence of these three conditions is an immediate consequence of \cref{triangulatedversion} together with~\cite[Theorem 3.3 and Remark 3.14]{BDS}. We give a couple of examples and refer the reader to~\cite{BDS} for more details and more examples. Given a map $f\colon R \to S$ of commutative rings, or (highly structured) commutative ring spectra, the extension of scalars $S \otimes_R^\msf{L} -\colon \msf{D}(R) \to \msf{D}(S)$ is definable if and only if $\msf{R}\mrm{Hom}_R(S,-)\colon \msf{D}(R) \to \msf{D}(S)$ is definable, if and only if $S$ is compact in $\msf{D}(R)$. If $f\colon X \to Y$ is a map of quasi-compact, quasi-separated schemes, then the inverse image functor $f^*\colon \msf{D}(\mrm{QCoh}(Y)) \to \msf{D}(\mrm{QCoh}(X))$ is definable if and only if the twisted inverse image functor $f^{(1)} = f^!$ is definable. These are moreover equivalent to $f$ being quasi-perfect in the sense of~\cite{LN} and to Grothendieck duality as in~\cite[Proposition 2.1]{LN}, also see~\cite{Neemanduality}.
\end{chunk}

\begin{chunk}{\textbf{Functors between Frobenius categories.}}\label{stabilize}
Let $\A$ be an exact category in the sense of Quillen. Recall that an object $E\in\A$ is said to be injective if every inflation $E\to X$ in $\A$ is split, and dually an object $P\in\A$ is said to be projective if every deflation $X\to P$ in $\A$ is split. The category $\A$ is said to have enough injectives if for every $A\in\A$ there is an inflation $A\to E$ with $E$ an injective object. The notion of enough projectives is dually defined.

The category $\A$ is said to be \ti{Frobenius} if it has enough injectives and enough projectives, and the classes of injective and projective objects coincide. In this case the stable category of $\A$, denoted $\msf{St}\,\A$, is triangulated. This category has objects the same as $\A$, and morphisms are given by
\[
\underline{\Hom}_{\A}(X,Y)=\Hom_{\A}(X,Y)/\{f\colon X\to Y: f\t{ factors through an injective object}\}.
\]
The shift of an object in $\msf{St}\,\A$ is given by its cosyzygy, that is $\Sigma X$ appears in a conflation $0\to X\to E\to \Sigma X\to 0$, where $E$ is an injective object. A sequence $X\to Y\to Z\to \Sigma X$ is a triangle in $\msf{St}\,\A$ if and only if $0\to X\to Y\to Z\to 0$ is a conflation in $\A$. Details and proofs of these facts can be found in \cite[\S 3.3]{krbook}.

Let us note that we shall always assume $\msf{St}\,\A$ is compactly generated, even though this, in general, need not be the case.

Now, suppose that $\A$ and $\B$ are exact categories, and let $F\colon\A\to\B$ be an additive functor between Frobenius categories. Suppose $F$ additionally sends injective objects in $\A$ to injective objects in $\B$. It then induces an additive functor
\[
\msf{St}\,F\colon\msf{St}\,\A\to\msf{St}\,\B,
\]
which need not be a triangulated functor, since we have made no assumptions on $F$ being exact.

Let $\A$ and $\B$ be Frobenius categories which are closed under direct limits and products, and suppose $F\colon\A\to\B$ is a functor which preserves injective objects, direct limits and products. We claim that the induced functor
\[
\msf{St}\,F\colon\msf{St}\,\A\to\msf{St}\,\B
\]
is a definable functor of triangulated categories which is not necessarily triangulated. To prove this, let $\msf{q}_\A\colon \A \to \msf{St}\,\A$ denote the canonical localisation functor, and likewise define $\msf{q}_{\B}$, and note that $\msf{St}\,F\circ\msf{q}_\A=\msf{q}_{\B}\circ F$. That $\msf{St}\,F$ preserves products follows since both $\msf{q}_\A$ and $\msf{q}_\B$, as well as $F$, preserve products, while the preservation of homotopy colimits follows from \cite[Proposition 2.2(2)]{toen} and the fact that $F$ preserves direct limits. Consequently, by \cref{enhancement}, we see that $\msf{St}\,F$ is, as desired, definable.
\end{chunk}

\begin{chunk}{\textbf{Extension for Gorenstein flat modules.}}\label{gfc}
This is a more detailed example concerning Frobenius categories, where the induced functor on stable categories is definable, but the functor on Frobenius categories is not. Let $R$ be a right coherent ring. Recall that a left $R$-module $M$ is \ti{Gorenstein flat} if there is an acyclic complex of flat left $R$-modules $\msf{F}$ with $M=Z_{0}\msf{F}$ such that $E\otimes_{R}\msf{F}$ is acyclic for every injective right $R$-module $E$. The full subcategory of Gorenstein flat (left) $R$-modules is denoted $\msf{GFlat}(R)$. 
A left $R$-module $X$ is \ti{cotorsion} if $\t{Ext}_{R}^{1}(F,X)=0$ for all flat left $R$-modules $F$. The full subcategory of cotorsion modules is denoted $\msf{Cot}(R)$. 

There is an abelian model structure, called the Gorenstein flat model structure, on $\Mod{R}$, as introduced in \cite[Theorem 3.3]{gillespie}. The class of bifibrant objects in this model structure is the class $\msf{GFlatCot}(R):=\msf{GFlat}(R)\cap\msf{Cot}(R)$, which, by \cite[Corollary 3.4]{gillespie}, is a Frobenius exact category, whose projective-injective objects is the category $\msf{FlatCot}(R)$, the category of flat and cotorsion $R$-modules, which are the trivial bifibrant objects. The homotopy category of this model structure is triangulated equivalent to the stable category $\msf{St}(\msf{GFlatCot}(R))$. As such, $\msf{St}(\msf{GFlatCot}(R))$ admits both products and coproducts. The products are those of $\Mod{R}$, while the coproducts arise through a functorial fibrant replacement.

Suppose that $R\to S$ is a ring map of noetherian rings such that both $R$ and $S$ admit dualising complexes, and $S$ is finitely generated of finite flat dimension over $R$. In this set up, the categories $\msf{St}(\msf{GFlatCot}(R))$ and $\msf{St}(\msf{GFlatCot}(S))$ are compactly generated. The extension of scalars functor $S\otimes_{R}-$ yields a functor $\msf{St}(\msf{GFlatCot}(R))\to\msf{St}(\msf{GFlatCot}(S))$ which is triangulated and definable, as was proved in~\cite{birdgfc}.

Let us now contrast this example with \cref{stabilize}. In this example, the Frobenius categories $\msf{GFlatCot}(R)$ and $\msf{GFlatCot}(S)$ are not themselves definable subcategories of $\Mod{R}$ and $\Mod{S}$; although in many desirable circumstances the functor $S\otimes_{R}-\colon\msf{GFlat}(R)\to\msf{GFlat}(S)$ is a definable functor between definable subcategories. The issue is that coproducts (and direct limits) in $\msf{GFlatCot}(R)$ are not those inherited from the ambient module category - a coproduct of cotorsion modules is seldom cotorsion. As such this example does not fall into the setting of \cref{stabilize}, and instead gives an `exotic' definable functor on stable categories which does not arise from a definable functor on the Frobenius level. Several questions pertaining to definability in $\msf{St}(\msf{GFlatCot}(R))$ and the behaviour of the functor $S\otimes_{R}-$ were considered in detail in~\cite{birdgfc}, particularly in relation to the Ziegler spectrum of the category of Gorenstein flat modules.
\end{chunk}

\begin{chunk}{\textbf{Definable truncations.}}
Let us now give an explicit example of a non-triangulated definable functor. Let $\msf{t}=(\T_{\geq 0},\T_{\leq 0})$ be a t-structure such that the coaisle $\T_{\leq 0}$ is a definable subcategory. Sources of t-structures with definable coaisles are discussed in~\cite[\S 8.2]{saorstov}. In this setup the composition $\msf{inc}\circ \tau_{\leq 0}\colon\T\to\T$ is a definable functor, but is not triangulated unless $\T_{\leq 0}$ is. To see this, it suffices to show that $\msf{inc} \circ \tau_{\leq 0}$ preserves products and filtered homology colimits by \cref{enhancement}. Product preservation is immediate, so we show closure under filtered homology colimits. Consider the triangles \[\msf{homcolim}\tau_{>0}X_i \to \msf{homcolim} X_i \to \msf{homcolim}\tau_{\leq 0} X_i\] and \[\tau_{>0}\msf{homcolim}X_i \to \msf{homcolim}X_i \to \tau_{\leq 0}\msf{homcolim}X_i.\] As the coaisle is definable by assumption, the latter term of the first triangle lies in it, since definable subcategories are closed under filtered homology colimits. The aisle is always closed under filtered homology colimits, so comparing the above triangles gives the desired claim. One can give similar examples based on the aisle rather than coaisle.
\end{chunk}

\begin{chunk}{\textbf{Embeddings along coherent functors.}}\label{stalk}
Using coherent functors, we may construct many non-triangulated definable functors. Suppose $H\colon\T\to\scr{A}$ is a coherent functor into a finitely accessible category with products. Recall from \cref{appendix.D} that there is functor $\msf{d}_{\scr{A}}\colon\scr{A}\to\mbb{D}(\scr{A})$ which preserves direct limits and products, so is a definable functor from $\scr{A}$ into a locally coherent category; thus the composition $\tilde{H}=\msf{d}_{\scr{A}}\circ H\colon\T\to\mbb{D}(\scr{A})$ is coherent.

Let us suppose that $\mbb{D}(\scr{A})$ is such that $\D(\scr{A}):=\D(\mbb{D}(\scr{A}))$, the derived category of $\mbb{D}(\scr{A})$, is compactly generated; conditions for such an assumption to hold can be found in \cite[\S 7]{spure}. Then the $n$-th stalk functor $\tilde{H}(-)[n]$ defined as the composite
\[\T \xrightarrow{\tilde{H}} \mbb{D}(\scr{A}) \xrightarrow{[n]} \D(\scr{A})\]
is a definable functor, which is not triangulated. 

Some of these steps may, of course, be superfluous, for example when $\scr{A}$ is itself a Grothendieck category such that $\D(\scr{A})$ is compactly generated. Then there is a coherent functor given by the homological functor $H\colon\D(\scr{A})\to\scr{A}$ associated to the standard $t$-structure. Then $H(-)[n]\colon\D(\scr{A})\to\D(\scr{A})$ is definable but not triangulated.
\end{chunk}

\section{Induced maps between Ziegler spectra}\label{sec:mapsonziegler} 
If $F\colon\T\to\U$ is a definable functor, then we saw in \cref{preservespurestructure} that $F$ preserves pure injective objects. One may then hope that, in certain circumstances, $F$ induces a map $\msf{Zg}(\T)\to\msf{Zg}(\U)$ between the Ziegler spectra of $\T$ and $\U$. In this section we prove that this is indeed the case. We first treat the case of coherent functors.
\begin{thm}\label{thm:coherentzieglermap}
Suppose $H\colon\T\to\scr{A}$ is a coherent functor which is full on pure injective objects. Then:
\begin{enumerate}
    \item if $X \in \msf{pinj}(\T)$ then $HX \in \msf{pinj}(\scr{A})$ or $HX = 0$; 
    \item $H$ induces a closed and continuous map $\msf{Zg}(\T)\to\msf{Zg}(\scr{A})$, which restricts to a homeomorphism 
\[
\msf{Zg}(\T)\setminus\msf{K}\xrightarrow{\sim}\msf{Zg}(\scr{A}) \cap \msf{Def}(\msf{Im}\,H).
\]
where $\msf{K} = \{X \in \msf{Zg}(\T) : HX = 0\}$.
    \end{enumerate}
\end{thm}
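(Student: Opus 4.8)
The plan is to leverage the universal property of \cref{uniprop} to reduce the entire statement to a question about the definable functor $\hat{H}\colon\Flat{\T^{\c}}\to\scr{A}$ together with the homeomorphism $\y\colon\msf{Zg}(\T)\xrightarrow{\sim}\msf{Zg}(\Flat{\T^{\c}})$ of \cref{purityflats}\ref{purityflats4}. More precisely, since $H=\hat{H}\circ\y$ and $\y$ restricts to an equivalence $\msf{Pinj}(\T)\xrightarrow{\sim}\msf{Pinj}(\Flat{\T^{\c}})$, one sees that $H$ is full on pure injectives if and only if $\hat{H}$ is, and that $HX=0$ if and only if $\hat{H}\y X=0$. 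So after transporting along $\y$, it suffices to prove the analogous statement for the definable functor $\hat{H}$ between the finitely accessible categories $\Flat{\T^{\c}}$ and $\scr{A}$: a definable functor which is full on pure injectives sends each indecomposable pure injective either to $0$ or to an indecomposable pure injective, and induces a closed continuous map on Ziegler spectra restricting to the stated homeomorphism.

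For part (1), let $X\in\msf{pinj}(\Flat{\T^{\c}})$ and suppose $\hat{H}X\neq 0$. Since $\hat{H}$ is definable it preserves pure injectives (see \cref{definablelfp}), so $\hat{H}X$ is pure injective in $\scr{A}$. To see it is indecomposable, I would argue via endomorphism rings: fullness of $\hat{H}$ on pure injectives gives that $\mrm{End}_{\scr{A}}(\hat{H}X)$ is a quotient of $\mrm{End}(X)$, which is local since $X$ is indecomposable pure injective; a quotient of a local ring is local (or zero), and it is nonzero as $\hat{H}X\neq 0$, hence $\hat{H}X$ is indecomposable. This handles (1).

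For part (2), set $\msf{K}=\{X\in\msf{pinj}(\T):HX=0\}$; I would first note that $\msf{K}=\msf{Ker}(H)\cap\msf{pinj}(\T)$ and that $\msf{Ker}(H)=H^{-1}(0)$ is a definable subcategory by \cref{coherentpreimageandimage}\ref{coherentpreimage}, so $\msf{K}$ is Ziegler closed and its complement $\msf{Zg}(\T)\setminus\msf{K}$ is an open subspace. By part (1) the assignment $X\mapsto HX$ is a well-defined function $\msf{Zg}(\T)\setminus\msf{K}\to\msf{Zg}(\scr{A})$, which we may also regard as a function $\msf{Zg}(\T)\to\msf{Zg}(\scr{A})$ (extending by ignoring $\msf{K}$, or by declaring it undefined there and checking the map is well-defined as a map of spaces in the standard way for partial maps on Ziegler spectra). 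The key computations are: (a) \emph{continuity/closedness of the induced map}: a basic closed set of $\msf{Zg}(\scr{A})$ has the form $\mc{C}\cap\msf{pinj}(\scr{A})$ for a definable $\mc{C}\subseteq\scr{A}$, and its preimage is $(H^{-1}\mc{C})\cap\msf{pinj}(\T)$, which is Ziegler closed by \cref{coherentpreimageandimage}\ref{coherentpreimage}; similarly, a closed set $\mc{D}\cap\msf{pinj}(\T)$ maps to $\msf{pure}(H\mc{D})\cap\msf{pinj}(\scr{A})$, which is closed by \cref{coherentpreimageandimage}\ref{coherentimage} (one must check $H(\mc{D}\cap\msf{pinj}(\T))$ and $\msf{pure}(H\mc{D})$ have the same indecomposable pure injectives, which follows from the description in \cref{coherentimage} that $\msf{pure}(H\mc{D})=\msf{Def}(H\mc{D})$ together with the fact that definable subcategories are determined by their indecomposable pure injectives); (b) \emph{the restricted map is a homeomorphism}: its image lands in $\msf{Def}(\msf{Im}\,H)\cap\msf{pinj}(\scr{A})$ since each $HX$ lies in $\msf{Def}(\msf{Im}\,H)$; it is surjective onto $\msf{Zg}(\scr{A})\cap\msf{Def}(\msf{Im}\,H)$ because $\msf{Def}(\msf{Im}\,H)=\msf{Def}(H\,\msf{pinj}(\T))$ (as $\T=\msf{Def}(\msf{pinj}(\T))$ and $H$ preserves definable building by \cref{coherentpreimageandimage}\ref{coherentdefbuild}), so every indecomposable pure injective of $\msf{Def}(\msf{Im}\,H)$ is a pure subobject of a product of objects $HX$ with $X\in\msf{pinj}(\T)$, and an indecomposable pure injective pure subobject of such a product is a summand of some $HX$, hence by part (1) equals some $HX'$; it is injective because if $HX\cong HX'$ then fullness of $H$ on pure injectives produces mutually inverse maps $X\to X'$, $X'\to X$ after lifting the isomorphism and its inverse — here one uses that $\y$ (equivalently $\hat{H}$ after the reduction) detects isomorphisms on pure injectives; and it is a closed map onto its image by the closedness established in (a) restricted to the open subspace.

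The main obstacle I anticipate is the \emph{injectivity} of the induced map on $\msf{Zg}(\T)\setminus\msf{K}$, i.e.\ showing $HX\cong HX'$ forces $X\cong X'$ for indecomposable pure injectives not killed by $H$. Fullness gives a map $X\to X'$ lifting any chosen isomorphism $HX\xrightarrow{\sim}HX'$ and a map $X'\to X$ lifting its inverse, but a priori the composites $X\to X$ and $X'\to X'$ need only map to identities under $H$, not be identities themselves; one must argue that since $\mrm{End}(X)$ is local, any endomorphism that is nonzero modulo the maximal ideal (which $H$ detects, given $HX\neq0$) is automatically an isomorphism, and that $H$ applied to a non-isomorphism of indecomposable pure injectives — equivalently a non-invertible element of a local ring — cannot be an isomorphism precisely because $H$ restricted to $\mrm{End}(X)\to\mrm{End}(HX)$ is a ring surjection onto a nonzero (local) ring, so it reflects the maximal ideal. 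Getting this local-ring bookkeeping exactly right, and making sure the surjectivity argument in (b) correctly reduces an indecomposable pure injective of $\msf{Def}(\msf{Im}\,H)$ to a literal value $HX'$ (rather than merely a summand of a product of such), are the two places where care is needed; everything else is a routine transport along the equivalences of \cref{uniprop} and \cref{purityflats}.
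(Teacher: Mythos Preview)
Your overall strategy---reduce everything to the definable functor $\hat{H}\colon\Flat{\T^{\c}}\to\scr{A}$ via the homeomorphism $\y\colon\msf{Zg}(\T)\xrightarrow{\sim}\msf{Zg}(\Flat{\T^{\c}})$---is exactly what the paper does. The difference is that, having made this reduction, the paper simply invokes the known results for definable functors between finitely accessible categories (\cite[Corollary~13.6, Corollary~15.4, Theorem~15.5]{dac}) as black boxes, whereas you try to reprove them by hand. Your arguments for part~(1), for continuity, and for injectivity are correct and are essentially the standard proofs of those cited facts; the local-ring bookkeeping you worry about goes through exactly as you sketch.

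The genuine gap is in surjectivity (and consequently in closedness, which you correctly observe reduces to the same issue). You assert that ``an indecomposable pure injective pure subobject of such a product is a summand of some $HX$''. But an indecomposable pure injective direct summand of $\prod_{i}HX_{i}$ need not be a summand of a single factor: that statement is equivalent to saying the set $\{HX : X\in\msf{pinj}(\T)\}$ is already Ziegler closed, which is precisely the surjectivity you are trying to establish, so the argument is circular. (Concretely, for any non-closed subset $S\subset\msf{Zg}(\scr{A})$ there is an indecomposable pure injective in $\msf{Def}(S)$, hence a summand of a product of members of $S$, which is not itself in $S$.) The actual proof in \cite[Theorem~15.5]{dac} does not proceed this way: it uses the duality between definable categories and small abelian categories, under which a definable functor full on pure injectives corresponds to a Serre quotient of the associated functor categories, and then appeals to the well-understood behaviour of Ziegler spectra under Serre localisation. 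That functor-category input is what your direct argument is missing; once you grant it (or cite \cite{dac} as the paper does), the rest of your transport along $\y$ is exactly right.
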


\begin{proof}
As $H$ is full on pure injectives, so is the induced definable lift $\widehat{H}\colon\Flat{\T^{\c}}\to\scr{A}$ of \cref{uniprop}, since for any pure injectives $E_1, E_2 \in \scr{A}$, we have \[\Hom_{\scr{A}}(\widehat{H}(E_{1}),\widehat{H}(E_{2}))\simeq \Hom_{\scr{A}}(HX_{1},HX_{2})\] where $X_{i}$ is the unique pure injective object in $\T$ such that $\y X_{i}=E_{i}$. For $X\in\msf{pinj}(\T)$, we deduce that $HX = \widehat{H}(\y X)$ is either zero or an indecomposable object of $\scr{A}$ by \cite[Corollary 13.6]{dac} as $\widehat{H}$ is full on pure injectives, which proves the first statement.

For the second statement, as $\widehat{H}$ is full on pure injectives, by \cite[Corollary 15.4]{dac}, it induces a closed and continuous map $\hat{H}\colon\msf{Zg}(\Flat{\T^{\c}})\to\msf{Zg}(\scr{A})$. Since $\y\colon\msf{Zg}(\T)\xrightarrow{\sim}\msf{Zg}(\Flat{\T^{\c}})$ is a homeomorphism by \cref{purityflats}\ref{purityflats4}, the fact that $H=\hat{H}\circ\y$ shows that $H\colon \msf{Zg}(\T) \to \msf{Zg}(\scr{A})$ is also closed and continuous.

Moreover the map $\widehat{H}\colon\msf{Zg}(\Flat{\T^{\c}})\to\msf{Zg}(\scr{A})$ gives, by \cite[Theorem 15.5]{dac}, a homeomorphism \[\msf{Zg}(\Flat{\T^{\c}})\setminus\{Z\in\msf{Zg}(\Flat{\T^{\c}}):\widehat{H}(Z)=0\}\simeq \msf{Zg}(\scr{A}) \cap \msf{Def}(\msf{Im}\,\widehat{H}).\] By \cref{purityflats}, we see that $\{Z\in\msf{Zg}(\Flat{\T^{\c}}):\widehat{H}(Z)=0\}\simeq \{X\in\msf{Zg}(\T):H(X)=0\}$. Moreover by \cref{coherentpreimageandimage}\ref{coherentimage}, it is immediate that $\msf{Def}(\msf{Im}\,H)=\msf{Def}(\msf{Im}\,\widehat{H})$, thus $H$ restricts to a homeomorphism $\msf{Zg}(\T)\setminus\msf{K}\simeq \msf{Zg}(\scr{A}) \cap \msf{Def}(\msf{Im}\,H)$ as claimed. 
\end{proof}

\begin{rem}\label{allresultstransfertocoherent}
We note that every result of \cite[\S15]{dac} can be transferred to the setting of coherent functors, by considering the definable functor $\widehat{H}\colon\Flat{\T^{\c}}\to\scr{A}$ of \cref{uniprop}, and the equivalence of categories $\y\colon\msf{Pinj}(\T)\xrightarrow{\sim}\msf{Pinj}(\Flat{\T^{\c}})$.
\end{rem}

We see, as in the case of coherent functors, that under a certain fullness condition definable functors also induce maps between Ziegler spectra.

\begin{thm}\label{thm:definducedonzeigler}
Let $F\colon\T\to\U$ be a definable functor which is full on pure injective objects. Then:
\begin{enumerate}
    \item if $X\in\msf{pinj}(\T)$, then $FX\in\msf{pinj}(\U)$ or $FX = 0$;
    \item $F$ induces a closed and continuous map $\msf{Zg}(\T)\to\msf{Zg}(\U)$ which restricts to a homeomorphism
\[
\msf{Zg}(\T)\setminus\msf{K}\simeq \msf{Zg}(\U) \cap \msf{Def}(\msf{Im}\,F)
\]
where $\msf{K} = \{X \in \msf{Zg}(\T) : FX = 0\}$.
\end{enumerate}
\end{thm}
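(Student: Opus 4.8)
The plan is to deduce this from the already-proved \cref{thm:coherentzieglermap} by composing with the restricted Yoneda embedding of $\U$. Set $H := \y\circ F\colon \T\to\Flat{\U^{\c}}$. Since $F$ is definable, \cref{definablepreserves} gives that $F$ preserves coproducts, products and pure triangles; as $\y\colon\U\to\Flat{\U^{\c}}$ does the same (see \cref{uniyoneda}), the composite $H$ is a coherent functor into the finitely accessible category with products $\Flat{\U^{\c}}$. Equivalently, $H = \widehat{F}\circ\y$ with $\widehat{F}$ definable and $\y$ coherent, so the coherence of $H$ is immediate.

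Next I would verify that $H$ inherits the hypothesis of being full on pure injective objects. If $X\in\msf{Pinj}(\T)$ then $FX\in\msf{Pinj}(\U)$ by \cref{preservespurestructure}, hence $HX = \y FX\in\msf{Pinj}(\Flat{\U^{\c}})$. For $X_{1},X_{2}\in\msf{Pinj}(\T)$, the equivalence $\y\colon\msf{Pinj}(\U)\xrightarrow{\sim}\msf{Pinj}(\Flat{\U^{\c}})$ of \cref{purityflats}\ref{purityflats1} identifies the map $\Hom_{\T}(X_{1},X_{2})\to\Hom_{\Flat{\U^{\c}}}(HX_{1},HX_{2})$ induced by $H$ with the map $\Hom_{\T}(X_{1},X_{2})\to\Hom_{\U}(FX_{1},FX_{2})$ induced by $F$; since the latter is surjective by hypothesis, so is the former.

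Now apply \cref{thm:coherentzieglermap} to $H$. Its first conclusion says that for $X\in\msf{pinj}(\T)$ we have $HX = 0$ or $HX\in\msf{pinj}(\Flat{\U^{\c}})$; since $\y$ is an equivalence on pure injectives, it reflects both vanishing and indecomposability, so $FX = 0$ or $FX\in\msf{pinj}(\U)$, which is statement (1), and moreover $\msf{K} = \{X : FX = 0\} = \{X : HX = 0\}$. Its second conclusion gives a closed continuous map $\msf{Zg}(\T)\to\msf{Zg}(\Flat{\U^{\c}})$ restricting to a homeomorphism $\msf{Zg}(\T)\setminus\msf{K}\xrightarrow{\sim}\msf{Zg}(\Flat{\U^{\c}})\cap\msf{Def}(\msf{Im}\,H)$. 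Composing with the homeomorphism $\y^{-1}\colon\msf{Zg}(\Flat{\U^{\c}})\xrightarrow{\sim}\msf{Zg}(\U)$ of \cref{purityflats}\ref{purityflats4}, and noting this composite sends a point $X$ to $FX$, produces the asserted closed continuous map $\msf{Zg}(\T)\to\msf{Zg}(\U)$.

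Finally I would identify the image of the homeomorphism. By \cref{purityflats}\ref{purityflats4} applied to the definable subcategory $\msf{Def}(\msf{Im}\,F)$ of $\U$, the homeomorphism $\y$ carries $\msf{Zg}(\U)\cap\msf{Def}(\msf{Im}\,F)$ onto $\msf{Zg}(\Flat{\U^{\c}})\cap\msf{Def}(\y\,\msf{Def}(\msf{Im}\,F))$; and by \cref{definablebuilding} one has $\y(\msf{Def}(\msf{Im}\,F))\subseteq\msf{Def}(\y\,\msf{Im}\,F)$, so $\msf{Def}(\y\,\msf{Def}(\msf{Im}\,F)) = \msf{Def}(\y\,\msf{Im}\,F) = \msf{Def}(\msf{Im}\,H)$. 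Hence the two targets coincide, giving the homeomorphism $\msf{Zg}(\T)\setminus\msf{K}\xrightarrow{\sim}\msf{Zg}(\U)\cap\msf{Def}(\msf{Im}\,F)$. The only steps needing any care are the two bookkeeping points just made — that $H$ is full on pure injectives, and that $\msf{Def}(\msf{Im}\,H)$ matches $\msf{Def}(\msf{Im}\,F)$ under $\y$; everything else is formal transport along the equivalences of \cref{purityflats}. (One may also note that $\msf{K}$ is Ziegler-closed, being $\msf{Ker}(F)\cap\msf{pinj}(\T)$ with $\msf{Ker}(F)$ definable by \cref{kernelofdefinable}.)
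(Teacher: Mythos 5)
Your proposal is correct and follows essentially the same route as the paper: both apply \cref{thm:coherentzieglermap} to the coherent functor $\y\circ F=\widehat{F}\circ\y\colon\T\to\Flat{\U^{\c}}$ and then transport the conclusion back along the homeomorphism of \cref{purityflats}\ref{purityflats4}. The two bookkeeping points you single out (fullness on pure injectives passing to $H$, and $\msf{Def}(\msf{Im}\,H)$ matching $\msf{Def}(\msf{Im}\,F)$ under $\y$) are exactly the details the paper's proof leaves implicit, and your verifications of them are correct.
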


\begin{proof}
We apply \cref{thm:coherentzieglermap} to the coherent functor $\widehat{F}\circ \y\colon\T\to\Flat{\U^{\c}}$, which proves the first statement, and also gives a homeomorphism \[\msf{Zg}(\T)\setminus\{X\in\msf{Zg}(\T):FX=0\}\simeq\msf{Zg}(\Flat{\U^\c}) \cap \msf{Def}(\msf{Im}(\widehat{F}\y)).\] By combining this with the homeomorphism $\y\colon\msf{Zg}(\U) \cap \msf{Def}(\msf{Im}\,F)\xrightarrow{\sim}\msf{Zg}(\Flat{\U^\c}) \cap \msf{Def}(\msf{Im}(\hat{F}\y))$ from \cref{purityflats}\ref{purityflats4}, the second statement follows.
\end{proof}

\begin{rem}
As in \cref{allresultstransfertocoherent}, we can also transfer the other results concerning functoriality of the Ziegler spectrum, as stated in \cite[\S15]{dac}, to definable functors between triangulated categories. We note that the above also recovers \cite[Theorem 6.13]{Beligiannis}, in fact, any fully faithful embedding yields an embedding of Ziegler spectra. 
\end{rem}

Let us now give some examples of the previous result.
\begin{chunk}
Let $f\colon R\to S$ be a map of rings. Recall from \cite{geiglelenzing} that $f$ is a homological ring epimorphism if the restriction of scalars functor $f^{*}\colon\D(S)\to\D(R)$ is fully faithful. Since restriction of scalars is a definable functor, we therefore obtain a closed and continuous embedding $f^{*}\colon\msf{Zg}(\D(S))\hookrightarrow\msf{Zg}(\D(R))$ which is a homeomorphism onto its image.
\end{chunk}

\begin{chunk}
If $L$ is a smashing localization of $\T$, then the inclusion of the local objects $i\colon L\T \hookrightarrow \T$ is a fully faithful definable functor. In particular, it induces a closed, continuous embedding of Ziegler spectra $\msf{Zg}(L\T) \hookrightarrow \msf{Zg}(\T)$ which is a homeomorphism onto its image.
\end{chunk}

\bibliographystyle{abbrv}
\bibliography{references.bib}
\end{document}